
\documentclass[12pt]{amsart}
\usepackage{amsmath, amsfonts, amsbsy, amsthm, amscd, graphicx}
\usepackage{amssymb, latexsym, color}

\setlength{\textheight}{9in}
\addtolength{\textwidth}{.815in}
\addtolength{\oddsidemargin}{-.5in}
\addtolength{\topmargin}{-.5in}
\addtolength{\evensidemargin}{-.5in}
\addtolength{\footskip}{.5in}

\numberwithin{equation}{section}

\theoremstyle{plain}
\newtheorem{Theorem}{Theorem}[section]
\newtheorem{Proposition}[Theorem]{Proposition}
\newtheorem{Lemma}[Theorem]{Lemma}

\theoremstyle{definition}

\theoremstyle{remark}

\pagestyle{plain}

\begin{document}

\title{Quaternionic CR Geometry}

\author{Hiroyuki Kamada}
\thanks{Hiroyuki Kamada, Miyagi University of Education,
149 Aramaki-Aoba, Aoba-ku, Sendai 980-0845, Japan\\ 
{\em E-mail address}: {\tt hkamada@staff.miyakyo-u.ac.jp}\\
Partly supported by 
the Grant-in-Aid for Scientific Research (C), 
Japan Society for the Promotion of Science.}

\author{Shin Nayatani}
\thanks{Shin Nayatani, Graduate School of Mathematics, Nagoya University,
Chikusa-ku, Nagoya 464-8602, Japan\\
{\em E-mail address}: {\tt nayatani@math.nagoya-u.ac.jp}\\
Partly supported by the Grant-in-Aid
for Scientific Research (B), 
Japan Society for the Promotion of Science.}

\subjclass[2000]{Primary 32V05; Secondary 53C15, 53C26.}
\keywords{
hyper CR structure; quaternionic CR structure; 
pseudohermitian structure; 
ultra-pseudoconvex; canonical connection}


\dedicatory{Dedicated to Professor Seiki Nishikawa 
on his sixtieth birthday}

\maketitle

\newcommand{\qhyp}{\mathop{H^{n+1}_{\Bbb H}}\nolimits}
\newcommand{\smallqhyp}{\mathop{H^{k+1}_{\Bbb H}}\nolimits}
\newcommand{\chyp}{\mathop{H^{n+1}_{\Bbb C}}\nolimits}
\newcommand{\smallchyp}{\mathop{H^{k+1}_{\Bbb C}}\nolimits}
\newcommand{\rhyp}{\mathop{H^{n+1}_{\Bbb R}}\nolimits}
\newcommand{\smallrhyp}{\mathop{H^{k+1}_{\Bbb R}}\nolimits}
\newcommand{\fhyp}{\mathop{H^{n+1}_{\Bbb F}}\nolimits}
\newcommand{\qball}{\mathop{B^{n+1}_{\Bbb H}}\nolimits}
\newcommand{\smallqball}{\mathop{B^{k+1}_{\Bbb H}}\nolimits}
\newcommand{\cball}{\mathop{B^{n+1}_{\Bbb C}}\nolimits}
\newcommand{\smallcball}{\mathop{B^{k+1}_{\Bbb C}}\nolimits}
\newcommand{\rball}{\mathop{B^{n+1}_{\Bbb R}}\nolimits}
\newcommand{\smallrball}{\mathop{B^{k+1}_{\Bbb R}}\nolimits}
\newcommand{\fball}{\mathop{B^{n+1}_{\Bbb F}}\nolimits}
\newcommand{\qsphere}{\mathop{S^{4n+3}}\nolimits}
\newcommand{\smallqsphere}{\mathop{S^{4k+3}_{\Bbb H}}\nolimits}
\newcommand{\chain}{\mathop{S_{\Bbb C}^1}\nolimits}
\newcommand{\rcircle}{\mathop{S_{\Bbb R}^1}\nolimits}
\newcommand{\smallcsphere}{\mathop{S^{2k+1}_{\Bbb C}}\nolimits}
\newcommand{\smallrsphere}{\mathop{S^k_{\Bbb R}}\nolimits}
\newcommand{\qhgroup}{\mathop{{\mathcal{H}}^{4n+3}}\nolimits}
\newcommand{\chgroup}{\mathop{{\mathcal{H}}^{2n+1}_{\Bbb C}}\nolimits}
\newcommand{\binfty}{\mathop{\partial_\infty}\nolimits}
\newcommand{\limitset}{\mathop{\Lambda(\Gamma)}\nolimits}
\newcommand{\domain}{\mathop{\Omega(\Gamma)}\nolimits}
\newcommand{\critical}{\mathop{\delta(\Gamma)}\nolimits}
\newcommand{\crauto}{\mathop{{\rm Aut}_{CR}(S^{2n+1})}\nolimits}
\newcommand{\qisom}{\mathop{G_{\Bbb H}(n+1)}\nolimits}
\newcommand{\cisom}{\mathop{G_{\Bbb C}(n+1)}\nolimits}
\newcommand{\smallcisom}{\mathop{G_{\Bbb C}(k+1)}\nolimits}
\newcommand{\risom}{\mathop{G_{\Bbb R}(n+1)}\nolimits}
\newcommand{\smallrisom}{\mathop{G_{\Bbb R}(k+1)}\nolimits}
\newcommand{\fisom}{\mathop{G_{\Bbb F}(n+1)}\nolimits}
\newcommand{\qhyperboloid}{\mathop{{\frak H}^{4n+3}}\nolimits}
\newcommand{\chyperboloid}{\mathop{{\frak H}^{2n+1}_{\Bbb C}}\nolimits}

\newcommand{\CC}{Carnot-Carath\'{e}odory\enskip}
\newcommand{\TW}{Tanaka-Webster\enskip}
\newcommand{\qua}{quaternionic\enskip}
\newcommand{\spc}{strongly pseudoconvex\enskip}
\newcommand{\PS}{Patterson-Sullivan\enskip}
\newcommand{\Ric}{\text{\rm Ric}}
\newcommand{\Tor}{\text{\rm Tor}}
\newcommand{\tr}{\text{\rm tr}}
\newcommand{\im}{\text{\rm Im}}
\newcommand{\re}{\text{\rm Re}}
\newcommand{\sym}{\text{\rm sym}}
\newcommand{\Levi}{\text{\rm Levi}}
\newcommand{\rmo}{\mathop{\sqrt{-1}}\nolimits}
\newcommand{\del}{\mathop{\partial}\nolimits}
\newcommand{\delbar}{\mathop{\bar{\partial}}\nolimits}
\newcommand{\onezero}{\mathop{Q^{1,0}}\nolimits}
\newcommand{\zeroone}{\mathop{Q^{0,1}}\nolimits}
\newcommand{\rank}{\mathop{\rm rank}\nolimits}
\newcommand{\id}{\mathop{\rm Id}\nolimits}

\newcommand{\define}{\mathop{\stackrel{\rm def}{\leftrightarrow}}\nolimits}
\newcommand{\defineeq}{\mathop{\stackrel{\rm def}{=}}\nolimits}

\newcommand{\R}{\mathop{\Bbb R}\nolimits}
\newcommand{\C}{\mathop{\Bbb C}\nolimits}
\renewcommand{\H}{\mathop{\Bbb H}\nolimits}

\renewcommand{\mod}{\mathrm{mod}\,\,}

\begin{abstract}
Modelled on a real hypersurface in a quaternionic manifold, we introduce 
a quaternionic analogue of CR structure, called quaternionic CR structure. 
We define the strong pseudoconvexity of this structure as well as 
the notion of quaternionic pseudohermitian structure. 
Following the construction of the Tanaka-Webster connection in complex 
CR geometry, we construct a canonical connection associated with a 
quaternionic pseudohermitian structure, when the underlying quaternionic 
CR structure satisfies the ultra-pseudoconvexity 
which is 
stronger than the strong pseudoconvexity. 
Comparison to Biquard's quaternionic contact structure \cite{biq} 
is also made. 
\end{abstract}

\section*{Introduction.}
A CR structure is a corank one subbundle of the tangent
bundle of an odd dimensional manifold, equipped with a complex
structure.
Such a structure typically arises on a real hypersurface
of a complex manifold.
Assuming that the CR structure is strongly pseudoconvex,
the underlying subbundle defines
a contact structure on the manifold,
and a strongly pseudoconvex CR structure together with a choice of contact form 
is called a pseudohermitian structure.
Associated with a pseudohermitian structure, there is a hermitian metric
on the subbundle, called the Levi form.
It is the simplest and most important example of \CC metric.
In pseudohermitian geometry the so-called \TW connection \cite{tana}, \cite{web}
plays the role of the Levi-Civita connection in Riemannian
geometry.
Multiplying the contact form by a nowhere vanishing function
gives another pseudohermitian structure, and accordingly the Levi form
changes conformally, being multiplied by the same function.
CR geometry thereby has a nature of conformal geometry,
and in particular, CR invariants can be computed as those
pseudohermitian invariants which are independent of the choice of
contact form.

In this paper, we shall introduce \qua analogues of CR and
pseudohermitian structures and lay the foundation of the geometry of
these structures.
We introduce two kinds of \qua analogues of CR structure,
with one refining the other.
An almost hyper CR structure is defined on a manifold of dimension
$4n+3$, as a pair of almost CR structures whose underlying
subbundles are transversal to each other and whose complex
structures are anti-commuting in an appropriate sense.
Then the third almost CR structure can be defined, and 
a corank three subbundle is defined as the intersection of the three corank one subbundles. 
The three complex structures leave this bundle invariant, and satisfy the quaternion
relations there. 
An almost hyper CR structure which satisfies a certain integrability condition 
is called a hyper CR structure. 
Analogously to the complex CR case, any real hypersurface
of a hypercomplex manifold has a natural hyper CR structure 
(satisfying a stronger integrability condition). 
A hyper CR structure exists also on a real hypersurface 
of a quaternionic manifold, but only locally. 
In order to have a global structure on any real hypersurface 
of a quaternionic manifold, we refine the notion of hyper CR structure. 
A \qua CR structure is a covering of a manifold by local
hyper CR structures, satisfying a certain gluing condition on
each domain where two such local structures overlap.

Associated with a hyper CR structure, there is a distinguished 
$\R^3$-valued one-form, unique up to 
multiplication by nowhere-vanishing real-valued functions.
For a choice of such one-form, the corresponding Levi form
is a \qua hermitian form on the corank three subbundle.
The strong pseudoconvexity of a hyper CR structure and the
pseudohermitian structure are then defined exactly as in the complex CR case.
On the other hand, the definition of the Levi form itself
is not quite similar to that in the complex CR case, 
and this is the point where the integrability of the hyper CR structure 
does play the crucial role.
These notions of Levi form, strong pseudoconvexity and pseudohermitian structure 
introduced for the hyper CR structure extend to the \qua CR structure.

With these structures at hand, our main concern is whether
there exists a \qua analogue of the \TW connection,
associated with a quaternionic pseudohermitian structure. 
In the hyper pseudohermitian case, our search for such a canonical
connection proceeds as follows.
There is a distinguished family of three-plane fields transverse to
the corank three subbundle and parametrized by sections of the corank three subbundle.
We choose such a three-plane field, and use it to define a one-parameter
family of Riemannian metrics on the manifold, extending
the Levi form on the corank three subbundle.
We then construct an affine connection, characterized by the
property that it has the smallest torsion among those affine
connections with respect to which the above Riemannian metrics
are all parallel. 

What remains to be done is to determine the transverse three-plane field 
so that the corresponding connection 
be best adapted to the structure under consideration in an appropriate sense. 
When a hyper pseudohermitian structure is given, 
the corank three subbundle associated with the underlying hyper CR structure 
comes equipped with an $Sp(n)$-structure. 
We will, however, be moderate by regarding the bundle as an $Sp(n)\cdot Sp(1)$-bundle, 
for the sake of later generalization of the construction to the quaternionic CR case. 
Then the best possible one can expect is that the connection restricts to an 
$Sp(n)\cdot Sp(1)$-connection on the bundle.
Since this turns out not to be possible in general, we will be 
contented by 
requiring that the connection be ``as close to an $Sp(n)\cdot Sp(1)$-connection as possible." 
The last expression will be made explicit by using the representation theory for $Sp(n)\cdot Sp(1)$. 
Note that for this strategy to work, we must primarily assume that $n\geq 2$, that is, the dimension of the underlying manifold is greater than seven; when $n=1$, 
since $Sp(1)\cdot Sp(1) = SO(4)$, 
any orthogonal connection on the oriented corank three bundle should necessarily be an 
$Sp(1)\cdot Sp(1)$-connection. 
It turns out that we must also assume that the hyper CR structure is what we call 
ultra-pseudoconvex. 
When these assumptions are satisfied, the above strategy completely works and thereby gives 
a connection in search. 
Note that the class of hyper CR structures which are ultra-pseudoconvex contains all 
strictly convex real hypersurfaces in ${\Bbb H}^{n+1}$. 
The notion of ultra-pseudoconvexity and the construction of the canonical connection extend 
to the \qua CR case.
It remains to see whether a canonical connection can be constructed when the dimension 
of the underlying manifold is seven. 
We will address this problem in a future work.

It should be mentioned that several quaternionic analogues of CR structures 
other than those in this paper have been introduced and studied 
by Hernandez \cite{hern}, Biquard \cite{biq}, 
Alekseevsky-Kamishima \cite{alkami0}, \cite{alkami} and others. 
Among them, Biquard's quaternionic contact structure is most 
influential and extensively studied. 
We therefore compare our quaternionic CR structure to the quaternionic 
contact structure. 
We observe that while a quaternionic contact structure can always 
be ``extended'' to a quaternionic CR structure, the quaternionic contact structure is 
more restrictive than the quaternionic CR structure. 
Indeed, we characterize, in terms of the Levi forms, a quaternionic CR 
structure whose underlying 
corank three subbundle has compatible quaternionic contact structure. 
We also give explicit examples of quaternionic CR
manifolds which do not satisfy the characterizing condition. 

More recently, Duchemin \cite{duc2} introduced the notion of 
weakly quaternionic contact structure, generalizing that of quaternionic 
contact structure. 
He showed that a real hypersurface in a quaternionic manifold admitted a canonical 
weakly quaternionic contact structure. 
More generally, one easily verifies that a quaternionic CR structure naturally 
produces a weakly quaternionic contact structure. 
As mentioned in \cite[\S 6]{duc2}, the construction of a canonical connection for a 
weakly quaternionic contact structure, 
generalizing the so-called Biquard connection for a quaternionic contact structure, 
remains to be done.

This paper is organized as follows: 
In \S 1, we introduce the definitions of hyper and quaternionic CR 
structures as well as hyper and quaternionic pseudohermitian sturctures.
In \S 2, we give examples of hyper and quaternionic CR manifolds. 
In \S 3, we construct a canonical connection associated with a hyper/quaternionic 
pseudohermitian structure, when the underlying hyper/quaternionic  
CR structure is ultra-pseudoconvex. 
The proof of a technical lemma is postponed to \S 4. 
Comparison to Biquard's quaternionic contact structure is made in \S 5. 
In Appendix, we give proofs of some fundamental facts which are stated in \S 1. 

The main contents of an earlier version of this paper were announced 
in \cite{kana}. However, some significant changes have been made in the 
present manuscript. Among others, we modified the definition of the 
integrability of hyper CR structure.  
The former definition required for each of the three CR structures 
constituting a hyper CR structure to be integrable as a CR structure. 
As pointed out by the referee, this definition had 
the following demerit: if one has a candidate for a quaternionic CR 
structure, e.g., a real hypersurface in a quaternionic manifold, 
one cannot tell whether there are integrable choices of local hyper CR 
structures inducing it, unless the quaternionic manifold is e.g., a hypercomplex manifold. 
As mentioned above, under the new definition of integrability, 
any real hypersurface in a quaternionic manifold has a natural 
quaternionic CR structure. 

Some words on notation. Throughout this paper, the triple of indices $(a,b,c)$ 
always stands for a cyclic permutation of $(1,2,3)$, unless otherwise stated.

\medskip\noindent
{\bf Acknowledgements.}\quad 
The first author thanks Andrew Swann and Martin Svensson for hospitality, suggestions 
and encouragement while he was visiting the University of Southern Denmark, Odense. 
The second author thanks G\'erard Besson for hospitality while visiting the University of Grenoble, 
where a part of this work was done. 
Both the authors thank the referee for his/her critical comments, 
which were crucial in improving the manuscript. 

\section{Hyper \& \qua CR structures and strong pseudoconvexity}
We start with a brief review of CR structure.
Let $M$ be an orientable manifold of real dimension $2n+1$.
An {\em almost CR structure} on $M$ is given by a corank one subbundle $Q$
of $TM$, the tangent bundle of $M$, together with a complex
structure $J : Q \rightarrow Q$.
Let $\onezero = \{Z \in Q \otimes {\Bbb C} \mid JZ = \rmo Z\}$; it is a complex rank $n$ subbundle of 
$TM \otimes{\Bbb C}$ satisfying $\onezero \cap \overline{\onezero} = \{0\}$. 
The bundle $\onezero$ recovers $Q$ and $J$ by $Q=\text{Re}(\onezero \oplus
\overline{\onezero})$ and $J(Z+\overline{Z}) = \rmo(Z-\overline{Z})$
for $Z \in \onezero$, respectively. 
A {\em CR structure} is an almost CR structure satisfying the integrability condition 
$[\Gamma(\onezero), \Gamma(\onezero)] \subset \Gamma(\onezero)$, 
or equivalently
\begin{equation}\label{CR_integrability0}
[X,Y]-[JX,JY],\,\, [X,JY]+[JX,Y] \in \Gamma(Q)
\end{equation}
and
\begin{equation}\label{CR_integrability}
J([X,Y]-[JX,JY])=[X,JY]+[JX,Y]
\end{equation}
for all $X, Y\in \Gamma(Q)$.
An almost CR structure is said to be {\em partially integrable} if it satisfies \eqref{CR_integrability0}, 
which is equivalent to the condition $[\Gamma(\onezero), \Gamma(\onezero)] \subset \Gamma(Q\otimes \C)$. 

Let $M$ be an almost CR manifold, and $\theta$ a one-form on $M$ whose kernel is 
the bundle of hyperplanes $Q$.
Such a $\theta$ exists globally, since we assume $M$ is orientable,
and $Q$ is oriented by its complex structure.
Associated with $\theta$ is a form $\Levi_\theta$ on $Q$, defined by
$$
\Levi_\theta(X,Y) = d\theta(X,JY),\quad X,Y \in Q, 
$$
and called the {\em Levi form} of $\theta$.
If the almost CR structure is partially integrable, then $\Levi_\theta$ 
is symmetric and $J$-invariant.
If $\theta$ is replaced by $\theta^\prime = \lambda\theta$ 
for a function $\lambda\neq 0$, then $\Levi_\theta$
changes conformally by $\Levi_{\theta^\prime} = \lambda \Levi_\theta$. 
An almost CR structure is said to be {\em \spc}~if it is partially integrable and 
$\Levi_\theta$ is positive or negative definite for some (hence any) choice of $\theta$.
In this case, $Q$ gives a contact structure on $M$, and $\theta$
is a contact form.

A CR structure typically arises on a real hypersurface $M$
of a complex manifold (of complex dimension $n+1$).
In this case $Q = TM \cap \mathcal{J}(TM)$ and $J = \mathcal{J}|_Q$, 
where $\mathcal{J}$ is the complex structure of the ambient complex manifold.
If $\rho$ is a defining function for $M$, then
$\theta = - \mathcal{J}(d\rho)/2$ annihilates $Q$.

A {\em pseudohermitian structure} on $M$ is a \spc almost CR structure together
with a choice of $\theta$ such that $\Levi_\theta$ is positive definite.
As $\theta$ is a contact form, it is accompanied by
the corresponding Reeb field $T$, determined by the equations
$\theta(T) = 1\quad \mbox{and}\quad d\theta(T,\cdot) = 0$. 

We now introduce a \qua analogue of CR structure.

\medskip\noindent
\Definition
Let $M$ be a connected, orientable manifold of dimension $4n+3$.
An {\em almost hyper CR structure} on $M$ is a pair of almost CR structures $(Q_1, I)$ 
and $(Q_2, J)$ which satisfies the following conditions:
\begin{enumerate}
\renewcommand{\theenumi}{\roman{enumi}}
\renewcommand{\labelenumi}{(\theenumi)}
\item 
$Q_1$ and $Q_2$ are transversal to each other;
\item the relation $IJ = -JI$ holds on $I(Q_1\cap Q_2) \cap J(Q_1\cap Q_2)$, the maximal domain 
on which the both sides make sense. 
\end{enumerate}

\medskip
We define the third almost CR structure $(Q_3, K)$ as follows. 
Set
$$
Q_3 = I(Q_1\cap Q_2) + J(Q_1\cap Q_2)\quad \mbox{and}\quad 
K= \left\{\begin{array}{cc}
-JI & \mbox{on $I(Q_1\cap Q_2)$},\\
IJ & \mbox{on $J(Q_1\cap Q_2)$}. 
\end{array}\right.
$$
Then $Q_3$ is a corank one subbundle of $TM$, and $K$ is well-defined
and satisfies the equation $K^2 = -\id$.
By the condition (ii), $Q_3$ is transversal to both $Q_1$ and $Q_2$. 
Moreover, the following relations hold: 
$$
I(Q_1 \cap Q_2) = Q_1 \cap Q_3,\quad J(Q_2 \cap Q_3) = Q_2 \cap Q_1,\quad K(Q_3 \cap Q_1) = Q_3 \cap Q_2;
$$
$$
\mbox{$IJ = K$ on $Q_2 \cap Q_3$,\quad $JI = -K$ on $Q_1 \cap Q_3$,\quad $JK = I$ on $Q_3 \cap Q_1$},
$$
$$
\mbox{$KJ = -I$ on $Q_2 \cap Q_1$,\quad $KI = J$ on $Q_1 \cap Q_2$,\quad $IK = -J$ on $Q_3 \cap Q_2$}.
$$

Set $Q = \cap_{a=1}^3 Q_a$.
It is a corank three subbundle of $TM$, and has three complex structures
$I$, $J$, $K$ satisfying the quaternion relations.
Henceforth, we shall write $I_1 = I$, $I_2 = J$ and $I_3 = K$ when
appropriate. 

\medskip\noindent
\Definition 
A triple $(T_1, T_2, T_3)$ of vector fields transverse to the subbundle $Q$
is called an {\em admissible triple} if it satisfies the following 
conditions:
$$ \hspace{-6cm}
\mbox{(i)}\,\, T_a \in \Gamma(Q_b \cap Q_c);\quad \mbox{(ii)}\,\, I_a T_b = T_c.
$$
We have
\begin{eqnarray*}
&& Q_a = Q \oplus {\Bbb R}T_b \oplus {\Bbb R}T_c, \\
&& TM = Q_a \oplus {\Bbb R}T_a = Q \oplus {\Bbb R}T_1 \oplus {\Bbb R}T_2 \oplus
{\Bbb R}T_3.
\end{eqnarray*} 
We call $Q^\perp = \oplus_{a=1}^3 \R T_a$ an {\em admissible three-plane
field}.

\medskip 
Note that an admissible triple $(T_1,T_2,T_3)$ certainly exists. 
Indeed, take $T_1\in \Gamma(Q_2 \cap Q_3)$ such that $(T_1)_q \notin Q_q$
($\Leftrightarrow (T_1)_q\notin (Q_1)_q$) for all $q\in M$.
Such a $T_1$ exists globally since $Q_2\cap Q_3$ is orientable and
$Q$ is oriented by its complex structures. 
Now it suffices to set $T_2 = KT_1$ and $T_3 = IT_2$. 

We shall next define an almost CR structure $(Q_{{\bf v}}, I_{{\bf v}})$
for each unit vector ${{\bf v}} = (v_1, v_2, v_3)\in \R^3$. 
Roughly speaking, $I_{{\bf v}}$ is defined to be $v_1I + v_2J +v_3K$,
which, however, makes sense only on $Q$. 
We rectify this defect by proceeding as follows.
Let $(T_1,T_2,T_3)$ be an admissible triple, and extend $I_1$, $I_2$, $I_3$ to endomorphisms 
$\widetilde{I_a} \colon TM \rightarrow TM$ by setting $\widetilde{I_a}T_a = 0$, 
and define $(Q_{{\bf v}}, I_{{\bf v}})$ by
\begin{equation}\label{Q_v}
Q_{{\bf v}} = Q\oplus \left\{x_1T_1 + x_2T_2 + x_3T_3 \mid x_1,x_2,x_3
\in {\Bbb R}, \sum_{a=1}^3 x_av_a = 0 \right\},
\end{equation}
\begin{equation}\label{I_v}
I_{{\bf v}} = (v_1\widetilde{I_1} + v_2\widetilde{I_2} + v_3\widetilde{I_3})|_{Q_{{\bf v}}}.
\end{equation} 
It is easy to verify that $I_{{\bf v}}$ indeed preserves $Q_{{\bf v}}$,
satisfies the equation ${I_{{\bf v}}}^2 = -\id$, and
$(Q_{{\bf v}}, I_{{\bf v}})$
is independent of the particular choice of admissible triple. 
Thus, associated with an almost hyper CR strucutre,
{{there}} is a canonical
family of almost CR structures parametrized by the unit sphere
$S^2$. 

Note that the above construction of the almost CR structure $(Q_{{\bf v}}, I_{{\bf v}})$ may be performed pointwise. 
Therefore, as ${\bf v}$, we can also take a variable function with values in $S^2$. 
 
There are two possible ways to define when a diffeomorphism between two almost hyper CR 
manifolds is an isomorphism. 
One way is to require that the diffeomorphism preserves each of the two almost CR 
structures 
constituting the almost hyper CR structure. 
The other is to require that it preserves the $S^2$-family of almost CR structures 
constructed above. 
We will find the effect of this difference when we investigate the automorphisms of 
the sphere $S^{4n+3}$ (\S 2, Example \ref{qsphere}). 


\medskip\noindent
\Definition 
An $\R^3$-valued one-form $\theta = (\theta_1, \theta_2, \theta_3)$ 
on an almost hyper CR manifold $M$ is said to be 
{\em compatible} with the almost hyper CR structure if it satisfies 
$$
\ker \theta_a = Q_a,\quad a = 1,2,3,
$$
\begin{equation}\label{eq101}
\theta_3\circ I = \theta_2\,\,\mbox{ on $Q_1$},\quad
\theta_1\circ J = \theta_3\,\,\mbox{ on $Q_2$},\quad
\theta_2\circ K = \theta_1\,\,\mbox{ on $Q_3$}.
\end{equation}

\medskip
Note that such a $\theta$ exists; it is enough to take an admissible triple
$(T_1, T_2, T_3)$ and
choose $\theta_a$ annihilating $Q_a$ so that $\theta_a(T_a)$ are nonzero and equal 
to each other (e.g., $\theta_a(T_a)= 1$). 
It is unique up to multiplication by a nowhere vanishing,
real-valued function.

In order to define a quaternionic analogue of Levi form, we require that our almost hyper CR structure 
should satisfy some sort of integrability condition. 

\medskip\noindent
\Definition An almost hyper CR structure is said to be {\em integrable} if it satisfies the following 
conditions for $a=1,2,3$ and for all $X,Y\in \Gamma(Q)$: 
\begin{equation}\label{integrability_condition0}
[X,Y] - [I_aX,I_aY] \in \Gamma(Q_a); 
\end{equation}
\begin{equation}\label{integrability_condition}
I_a ( [X,Y] - [I_aX,I_aY] ) - [X,I_aY] - [I_aX,Y]\in \Gamma(Q). 
\end{equation} 
Henceforth, we shall assume throughout that our almost hyper CR structure is integrable and refer to it 
as a {\em hyper CR structure}. 


\medskip\noindent
\Remark The integrability conditions \eqref{integrability_condition0}, \eqref{integrability_condition} are natural ones, 
as they are satisfied by the local hyper CR structure of any real hypersurface in a quaternionic manifold. 
See \S 2 for details. 

When a hyper CR structure is given, 
we can show that for any $S^2$-valued function ${\bf v}$, 
the almost CR structure $(Q_{{\bf v}}, I_{{\bf v}})$ defined above 
satisfies (appropriately modified versions of) \eqref{integrability_condition0}, 
\eqref{integrability_condition}. 
(see Proposiition \ref{familyintegrable} in Appendix \ref{ap_integrability}). 

\medskip 
Let $M$ be a hyper CR manifold and $\theta = (\theta_1, \theta_2, \theta_3)$ a compatible $\R^3$-valued one-form on $M$.
Note that by \eqref{integrability_condition0} we have 
\begin{equation}\label{I_a_invariance_of_dtheta_a}
d\theta_a(X,Y) = d\theta_a(I_aX,I_aY)
\end{equation} 
for $a=1,2,3$ and for all $X,Y\in \Gamma(Q)$. 
Moreover, we have the following identity for $X, Y\in Q$:
\begin{eqnarray}\label{eq102}
d\theta_1(X,IY) + d\theta_1(JX, KY) &=& d\theta_2(X,JY)
+ d\theta_2(KX, IY)\\
&=& d\theta_3(X, KY) + d\theta_3(IX, JY). \nonumber
\end{eqnarray}
Indeed, by plugging the both sides of \eqref{integrability_condition} with $a=1$ in $\theta_3$ and using \eqref{eq101}, 
we obtain
$$
\theta_2([X,Y]-[IX,IY]) = \theta_3([X,IY]+[IX,Y]),
$$
where $X, Y$ are extended to sections of $Q$. 
Therefore, 
$$
d\theta_2(X,Y)-d\theta_2(IX,IY) = d\theta_3(X,IY)+d\theta_3(IX,Y).
$$
Replacing $Y$ by $JY$ and using \eqref{I_a_invariance_of_dtheta_a}, we obtain the second equality of (\ref{eq102}).
We now define $\Levi_\theta(X,Y)$, the {\em Levi form} of $\theta$,
to be the half of this common quantity:
\begin{eqnarray*}
\Levi_\theta(X,Y) &=& \frac{1}{2}(d\theta_1(X,IY) + d\theta_1(JX, KY))\\
&=& \frac{1}{2} (d\theta_2(X,JY) + d\theta_2(KX, IY) )\\
&=& \frac{1}{2} (d\theta_3(X, KY) + d\theta_3(IX, JY) ).
\end{eqnarray*}
Note that $\Levi_\theta$ is nothing but 
the quaternion-hermtian (that is, symmetric and invariant under $I$, $J$, $K$) part 
of the ``complex" Levi form $\Levi_{\theta_a} = d\theta_a(\cdot, I_a \cdot)$ restricted to $Q$.
If $\theta$ is replaced by $\theta^\prime = \lambda \theta$, $\lambda \neq 0$,
then $\Levi_\theta$ changes conformally by $\Levi_{\theta^\prime}
= \lambda \Levi_\theta$.

\medskip\noindent
\Definition
We say that a hyper CR structure is {\em \spc}~if $\Levi_\theta$ is positive or negative definite
for some (hence any) choice of $\theta$. 
A {\em hyper pseudohermitian structure} is a \spc hyper CR structure together with
a choice of $\theta$ such that $\Levi_\theta$ is positive definite. 
We also call, by abuse, such a $\theta$ a {\em pseudohermitian structure}. 

\medskip 
We now introduce another \qua analogue of CR structure.

\medskip\noindent
\Definition
A {\em \qua CR structure} on $M$ is a covering of $M$ by local hyper CR
structures which satisfies the following condition: 
let
$\{(Q_a, I_a)\}_{a=1,2,3}$ and $\{(Q_a^\prime, I_a^\prime)\}_{a=1,2,3}$ 
be two such local structures defined on open subsets
$U$ and $U^\prime$ respectively.
If $U\cap U^\prime\neq\emptyset$, there is an 
$SO(3)$-valued function $S=S_{UU^\prime}\colon U\cap U^\prime \rightarrow SO(3)$ 
such that 
\begin{equation}\label{qcr}
Q_{\bf v}^\prime = Q_{S{\bf v}},\quad  
I_{\bf v}^\prime = I_{S{\bf v}}, \quad {\bf v}\in S^2, 
\end{equation}
where the notation is as in \eqref{Q_v}, \eqref{I_v}. 
(Note that $S{\bf v}$ is a variable function of $q\in U\cap U'$.) 

\medskip
There is a double covering $Sp(1)\rightarrow SO(3)$, and if $S$ can be lifted to 
an $Sp(1)$-valued function $\sigma\colon U\cap U^\prime \rightarrow Sp(1)$, which is the case 
when $U\cap U^\prime$ is simply-connected, then \eqref{qcr} may be written as 
\begin{equation}\label{qcr2}
Q_{\bf v}^\prime = Q_{\sigma^{-1}{\bf v}\sigma},
\quad
I_{\bf v}^\prime = I_{\sigma^{-1}{\bf v}\sigma},
\quad {\bf v}\in S^2.
\end{equation}
Here, $\sigma^{-1}{\bf v}\sigma$ is computed by regarding ${\bf v}$ as an imaginary quaternion 
via the identification $\R^3 = \im \H$. 
We adopt the convention that the atlas defining a \qua
CR structure is extended to a maximal one.
In particular, any (global) hyper CR structure canonically determines a \qua CR structure. 
Henceforth, we shall regard a hyper CR manifold as
equipped with this \qua CR structure.

Given a \qua CR structure, there are local corank three bundles $Q_U$
associated with the local hyper CR structures.
But $Q_U = Q_{U^\prime}$ on $U\cap U^\prime$, and they give rise to
a bundle $Q$ defined globally on $M$.

Let $\Theta = \{\theta_U \}$ be a collection of local $\R^3$-valued
one-forms compatible with the local hyper CR structures such that
\begin{equation}\label{theta_deform}
(\theta_{U^\prime})_a = \sum_{b=1}^3 s_{ab} (\theta_U)_b,\quad a=1,2,3
\end{equation}
on $U\cap U^\prime$, where $S=(s_{ab})$ is the $SO(3)$-valued function as in the definition above.
Such a collection exists, and it is unique up to multiplication
by a nowhere vanishing, real-valued function.
Associated with $\theta_U$ are the local Levi forms $\Levi_{\theta_U}$, 
for which we have the following 

\begin{Proposition}\label{prepLevi}
Let $M$ be a quaternionic CR manifold, and $\Theta = \{\theta_U\}$ a collection 
of local $\R^3$-valued one-forms on $M$ as above. 
Then the local Levi forms $\Levi_{\theta_U}$ and $\Levi_{\theta_{U'}}$ coincide on $U \cap U'$.
\end{Proposition}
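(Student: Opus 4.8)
The plan is to reduce the statement to a pointwise computation on the three-dimensional space $\mathrm{span}(T_1,T_2,T_3)$, exploiting that the Levi form is, by its very definition, the quaternion-hermitian part of the ``complex'' Levi forms $\Levi_{(\theta_U)_a}=d(\theta_U)_a(\cdot,(I_U)_a\cdot)|_Q$, a description that is manifestly independent of any choices other than $\theta_U$ itself. First I would fix a point $q\in U\cap U'$ and an admissible triple $(T_1,T_2,T_3)$ for the local hyper CR structure on $U$; since $Q_U=Q_{U'}$ on the overlap and the almost CR structures $(Q'_{\bf v},I'_{\bf v})$ are obtained from $(Q_{\bf v},I_{\bf v})$ by the rotation $S=S_{UU'}$, one checks that, writing $S=(s_{ab})$, the triple $T'_a=\sum_b s_{ba}T_b$ (or possibly $\sum_b s_{ab}T_b$, to be pinned down by matching conventions) is an admissible triple for the primed structure, and that $I'_a=\sum_b s_{ab}\,\widetilde{I_b}$ holds on $Q$ in the appropriate sense. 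Combined with the transformation law \eqref{theta_deform} for $\theta_{U'}$, this expresses every primed object appearing in the definition of $\Levi_{\theta_{U'}}$ in terms of unprimed ones.

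The heart of the matter is then the following linear-algebra identity: for $X,Y\in Q_q$,
\begin{equation*}
\Levi_{\theta_{U'}}(X,Y)=\sum_{a=1}^{3}\Bigl(\textstyle\sum_{b,c} s_{ab}s_{ac}\Bigr)\cdot\bigl(\text{contribution of the }b,c\text{ term}\bigr),
\end{equation*}
and because $S\in SO(3)$ the matrix $(s_{ab})$ is orthogonal, so $\sum_a s_{ab}s_{ac}=\delta_{bc}$. Concretely, I would substitute $(\theta_{U'})_a=\sum_b s_{ab}(\theta_U)_b$ and $I'_a=\sum_c s_{ac}\widetilde{I_c}$ into, say, the first defining expression $\frac12\bigl(d(\theta_{U'})_1(X,I'X)+\cdots\bigr)$ — actually into the intrinsic formula $\Levi_{\theta_{U'}}(X,Y)=\frac12\sum_a \bigl(d(\theta_{U'})_a(X,I'_aY)\bigr)^{\mathrm{qh}}$ where $(\,\cdot\,)^{\mathrm{qh}}$ denotes the quaternion-hermitian part — expand bilinearly, and collect the double sum over the rotation coefficients. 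The orthogonality relations collapse the double sum to the diagonal, and since the quaternion-hermitian part is taken with respect to the span of $I_1,I_2,I_3$ (which $S$ merely rotates among themselves), one lands back exactly on $\Levi_{\theta_U}(X,Y)$. I would also verify that the $\mathbb{Z}/2$ ambiguity in lifting $S$ to $Sp(1)$ is harmless here, since everything entering the Levi form is quadratic in the $I_a$'s and the $\theta_a$'s and hence insensitive to the sign.

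The main obstacle I anticipate is purely bookkeeping: keeping the index conventions for admissible triples, for the action of $S$ on the $(Q_{\bf v},I_{\bf v})$ family versus on the $\theta_a$, and for the cyclic-triple convention $(a,b,c)$ fixed in the paper, all mutually consistent — a sign error or a transpose here would wreck the cancellation. A secondary subtlety is that the identity \eqref{eq102} guaranteeing that the three defining expressions for $\Levi_\theta$ agree must be invoked for the primed structure too, which is legitimate because, by Proposition \ref{familyintegrable}, the whole $S^2$-family inherits the integrability conditions \eqref{integrability_condition0}, \eqref{integrability_condition}, so in particular the structures $(Q'_a,I'_a)$ do. Once the index conventions are fixed, the computation is a one-line consequence of $S^TS=\id$, and the proof is complete.
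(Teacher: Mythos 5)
Your proposal is correct in substance but takes a genuinely different route from the paper's. The paper first reduces to constant $(s_{ab})$ and then proves directly that the quantity $d\theta_{\mathbf u}(X,I_{\mathbf u}Y)+d\theta_{\mathbf u}(I_{\mathbf v}X,I_{{\mathbf u}\times{\mathbf v}}Y)$ is independent of the orthonormal frame $({\mathbf u},{\mathbf v},{\mathbf u}\times{\mathbf v})$ of $\R^3$: a preliminary lemma (Lemma \ref{replacing}) gives invariance under rotation of ${\mathbf v}$ in the plane orthogonal to ${\mathbf u}$, and the integrability \eqref{2ndint} of the whole $S^2$-family (Proposition \ref{familyintegrable}) is then used to move ${\mathbf u}$ to ${\mathbf u}'={\mathbf u}\times{\mathbf v}$ and, in a second step, to ${\mathbf e}_1$. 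You replace this geometric two-step rotation by the algebraic observation that $\Levi_\theta$ is the $Sp(1)$-invariant (quaternion-hermitian) projection of each $d\theta_a(\cdot,I_a\cdot)|_Q$ --- a projection intrinsic to the two-sphere of complex structures, hence identical for the two local structures --- so that averaging over $a$ and using $\sum_a s_{ap}s_{aq}=\delta_{pq}$ collapses the double sum to its diagonal, where \eqref{eq102} finishes the job. This is cleaner, and it does not actually need Proposition \ref{familyintegrable}: the primed triple is an integrable local hyper CR structure by definition of the atlas, so \eqref{eq102} holds for it automatically. Three small repairs are needed in a full write-up: (1) state explicitly that on $Q\times Q$ one has $d\theta_a'=\sum_p s_{ap}\,d\theta_p$ because the terms $ds_{ap}\wedge\theta_p$ vanish there (this is what licenses the bilinear expansion, and is the paper's reduction to constant $S$); (2) the normalization should be $\Levi_{\theta'}=\tfrac13\sum_a\bigl(d\theta_a'(\cdot,I_a'\cdot)\bigr)^{\mathrm{qh}}$ rather than $\tfrac12\sum_a$, since each summand already equals $\Levi_{\theta'}$; (3) the claim that the quaternion-hermitian projection is basis-independent --- i.e.\ that averaging a symmetric, $I$-invariant form over $\{1,I,J,K\}$ agrees with the Haar average over $Sp(1)$ --- deserves a one-line justification (for instance via $S^2(E\otimes H)\cong (S^2E\otimes S^2H)\oplus\Lambda^2E$, on which $J$ acts by $-1$ on the weight-zero part of $S^2H$). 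None of these affects the validity of the cancellation, and the transpose ambiguity you flag is indeed harmless since only the orthogonality of $S$ is used.
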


The proof of this proposition will be given in Appendix \ref{ap_Levi}. 
By Proposition \ref{prepLevi}, we obtain a globally defined symmetric bilinear form,
denoted by $\Levi_\Theta$, and call it the {\em Levi form} of $\Theta$.
Using this we define the
{\em strong pseudoconvexity} of a \qua CR structure as before. 
A {\em quaternionic pseudohermitian structure} is a \spc quaternionic CR structure together with
a choice of a collection $\Theta$ such that $\Levi_\Theta$ is positive definite.
Again, by abuse, such a collection $\Theta$ is called a {\em pseudohermitian structure}. 

Each fibre of $Q$ has a family of complex structures parametrized
by the two-sphere $S^2$ with no preferred choice of triple satisfying
the quaternion relations.
This amounts to saying that the bundle $Q$ has a $GL(n,{\Bbb H})
\cdot {\Bbb H}^*$-structure, where
$GL(n,{\Bbb H}) \cdot {\Bbb H}^* = GL(n,{\Bbb H}) \times Sp(1)
/\{\pm I_{n+1}\}$.
A choice of a pseudohermitian structure $\Theta = \{\theta_U\}$ gives $Q$ a fibre metric
$\Levi_\Theta$, and it is invariant under any of the complex
structures on the fibre.
Thus the choice of $\Theta$ reduces the structure group
of $Q$ from $GL(n,{\Bbb H}) \cdot {\Bbb H}^*$ to
$Sp(n)\cdot Sp(1) = Sp(n)\times Sp(1)/\{\pm I_{n+1}\}$.

\section{Real hypersurface and examples}
\subsection{Real hypersurface} 

Let $\mathcal{N}$ be a quaternionic manifold of dimension $4n+4$; thus $\mathcal{N}$ admits a torsion-free 
$GL(n+1, \H)\cdot \H^*$-affine connection $\mathcal{D}$. 
Then, in a neighborhood $\mathcal{U}$ of any point of $\mathcal{N}$, there exist almost complex structures 
$\mathcal{I}_a$, $a=1,2,3$, which satisfy 
$\mathcal{D} \mathcal{I}_a = \sum_{b=1}^3 \gamma_{ab}\otimes \mathcal{I}_b$, 
where $\gamma_{ab}$ are one-forms on $\mathcal{U}$ satisfying $\gamma_{ab} = -\gamma_{ba}$. 
Let $M$ be a real hypersurface in $\mathcal{N}$. 
Then $M$ comes equipped with a quaternionic CR structure in a canonical manner, 
by setting $U = M\cap \mathcal{U}$ for each $\mathcal{U}$ as above and defining 
$Q_a = TU\cap \mathcal{I}_a(TU)$ and $I_a = \mathcal{I}_a|_{Q_a}$. 
Thus the corank three subbundle $Q$ is given by $Q|_U = TU \cap \mathcal{I}_1(TU) \cap \mathcal{I}_2(TU) 
\cap \mathcal{I}_3(TU)$.
It remains to verify that the integrability conditions \eqref{integrability_condition0}, 
\eqref{integrability_condition} hold for all $X,Y\in \Gamma(Q|_U)$. 
To see that \eqref{integrability_condition0} holds, it suffices to show 
\begin{equation}\label{realhyp_formula1}
\mathcal{I}_a([X, Y] - [I_aX, I_aY])\in \Gamma(TU).
\end{equation} 
For this, let $\mathcal{X}, \mathcal{Y}\in \Gamma(T\mathcal{U})$ be local extensions of $X,Y$ respectively, 
and compute 
\begin{eqnarray*} 
\lefteqn{\mathcal{I}_a([\mathcal{X}, \mathcal{Y}] - [\mathcal{I}_a\mathcal{X}, \mathcal{I}_a\mathcal{Y}])}\\ 
&=& \mathcal{I}_a ( \mathcal{D}_{\mathcal{X}} \mathcal{Y} - \mathcal{D}_{\mathcal{Y}} \mathcal{X} 
- \mathcal{D}_{\mathcal{I}_a\mathcal{X}} \mathcal{I}_a\mathcal{Y} + \mathcal{D}_{\mathcal{I}_a\mathcal{Y}} 
\mathcal{I}_a\mathcal{X} ) \\ 
&=& \mathcal{D}_{\mathcal{X}} \mathcal{I}_a\mathcal{Y} - \sum_b \gamma_{ab}(\mathcal{X}) \mathcal{I}_b \mathcal{Y}  
- \mathcal{D}_{\mathcal{Y}} \mathcal{I}_a\mathcal{X} + \sum_b \gamma_{ab}(\mathcal{Y}) \mathcal{I}_b \mathcal{X} \\ 
&& + \mathcal{D}_{\mathcal{I}_a\mathcal{X}} \mathcal{Y}
- \sum_b \gamma_{ab}(\mathcal{I}_a\mathcal{X}) \mathcal{I}_a\mathcal{I}_b\mathcal{Y} 
- \mathcal{D}_{\mathcal{I}_a\mathcal{Y}} \mathcal{X}
+ \sum_b \gamma_{ab}(\mathcal{I}_a\mathcal{Y}) \mathcal{I}_a\mathcal{I}_b\mathcal{X} \\ 
&=& [\mathcal{X}, \mathcal{I}_a \mathcal{Y}] + [\mathcal{I}_a\mathcal{X}, \mathcal{Y}]
- \sum_b \gamma_{ab}(\mathcal{X}) \mathcal{I}_b \mathcal{Y}  
+ \sum_b \gamma_{ab}(\mathcal{Y}) \mathcal{I}_b \mathcal{X} \\ 
&& 
- \sum_b \gamma_{ab}(\mathcal{I}_a\mathcal{X}) \mathcal{I}_a\mathcal{I}_b\mathcal{Y} 
+ \sum_b \gamma_{ab}(\mathcal{I}_a\mathcal{Y}) \mathcal{I}_a\mathcal{I}_b\mathcal{X}. 
\end{eqnarray*}
Restricted to $U$, this shows 
$$
\mathcal{I}_a( [X, Y] - [I_aX, I_aY]) = [X, I_aY]+[I_aX, Y]\quad \mod \Gamma(Q|_U).  
$$
Therefore, \eqref{realhyp_formula1} holds, and the second condition \eqref{integrability_condition} 
for integrability is also verified. 

We now restrict ourselves to the case that the quaternionic manifold $\mathcal{N}$ is a queternionic 
affine space ${\Bbb H}^{n+1}$. 
Let $M$ be a (local) real hypersurface in ${\Bbb H}^{n+1}$,
and $\rho$ a defining function for $M$:
$M = \rho^{-1}(0)$, $d\rho \neq 0$ along $M$. 
Note that the tangent spaces of $M$ are given by
$T_q M = \{X \in {\Bbb H}^{n+1} \mid
d\rho_q(X)=0 \}$, $q\in M$.
Here and throughout, the tangent spaces $T_q {\Bbb H}^{n+1}$
are identified with ${\Bbb H}^{n+1}$ in the standard manner.
For each unit imaginary quaternion ${\bf v}$, a parallel complex structure 
$\mathcal{I}_{\bf v}$ on ${\Bbb H}^{n+1}$ is defined by
$\mathcal{I}_{\bf v} X = X {\bf v}^{-1}$, $X\in {\Bbb H}^{n+1}$. 
Thus there is a family of complex structures on ${\Bbb H}^{n+1}$
parametrized by $S^2$, the unit sphere in ${\rm Im}{\Bbb H} = \R^3$.
Each of these complex structures, $\mathcal{I}_{\bf v}$, determines
an (integrable) CR structure
$(Q_{\bf v}, I_{\bf v})$ on $M$, 
where $Q_{\bf v} = TM \cap \mathcal{I}_{\bf v}(TM)$ 
and $I_{\bf v} = \mathcal{I}_{\bf v}|_{Q_{\bf v}}$. 
In particular, the CR structures $(Q_1, I) = (Q_{\bf i}, I_{\bf i})$,
$(Q_2, J) = (Q_{\bf j}, I_{\bf j})$ and $(Q_3, K) = (Q_{\bf k},
I_{\bf k})$ define a hyper CR structure on $M$. 
Note that the condition that the three CR structures are all integrable is stronger 
than the integrability conditons \eqref{integrability_condition0}, \eqref{integrability_condition}. 

For the computation below, it is convenient to introduce the complex
coordinates
$z_h= x_h^0+\rmo x_h^1$, $w_h = x_h^2+\rmo x_h^3$, $h=1,\dots,n+1$, 
where $q = (q_1,\dots,q_{n+1})$ and
$q_h = x_h^0 - x_h^1{\bf i} - x_h^2{\bf j} - x_h^3{\bf k}$.
We have
\begin{equation}\label{H^{n+1}_hypercomplex_structure}
\begin{array}{c}
\mathcal{I}(dz_h) = \rmo dz_h,\,\, \mathcal{I}(dw_h) = \rmo dw_h;\quad 
\mathcal{J}(dz_h) = -d\overline{w_h},\,\, \mathcal{J}(dw_h) = d\overline{z_h};\\ 
\mathcal{K}(dz_h) = -\rmo d\overline{w_h},\,\, \mathcal{K}(dw_h) = \rmo d\overline{z_h}.
\end{array}
\end{equation}

As in the complex CR case, let $\theta_a = - \mathcal{I}_a (d\rho)/2$, $a=1,2,3$. 
Then $\theta = (\theta_1, \theta_2, \theta_3)$ is compatible with the hyper CR structure of $M$.
In the complex coordinates,
$$
\theta_1 = \frac{\rmo}{2} \sum_{h=1}^{n+1}\left(
- \frac{\partial \rho}{\partial z_h} dz_h
+ \frac{\partial \rho}{\partial \overline{z_h}} d\overline{z_h}
- \frac{\partial \rho}{\partial w_h} dw_h
+ \frac{\partial \rho}{\partial \overline{w_h}} d\overline{w_h}
\right),
$$
$$
\theta_2 + \rmo \theta_3
= \sum_{h=1}^{n+1} \left(
- \frac{\partial \rho}{\partial \overline{w_h}} dz_h
+ \frac{\partial \rho}{\partial \overline{z_h}} dw_h
\right).
$$
One can verify that the Levi form of $\theta$
is given by
\begin{eqnarray}\label{eq301}
\Levi_\theta &=& \sum_{h,l=1}^{n+1}
\left[\left(
\frac{\partial^2 \rho}{\partial z_h \partial \overline{z_l}}
+ \frac{\partial^2 \rho}{\partial \overline{w_h} \partial w_l}
\right) \left(dz_h\cdot d\overline{z_l} + d\overline{w_h}\cdot
dw_l\right)\right. \nonumber \\
&&\left.
+ \frac{\partial^2 \rho}{\partial z_h \partial\overline{w_l}}
\left(dz_h\cdot d\overline{w_l} - d\overline{w_h}\cdot
dz_l\right)
+ \frac{\partial^2 \rho}{\partial\overline{z_h} \partial w_l}
\left(d\overline{z_h}\cdot dw_l - dw_h\cdot d\overline{z_l}\right)
\right].
\end{eqnarray}

\bigskip\noindent
\Example\label{qsphere} Let $\qsphere = \left\{ q\in {\Bbb H}^{n+1} \mid |q|^2
= \overline{q}\cdot q = 1 \right\}$ be the unit sphere in ${\Bbb H}^{n+1}$, 
where $v\cdot w = \sum_{h=1}^{n+1}v_h w_h$ for $v = (v_1,\dots,v_{n+1})$,
$w = (w_1,\dots,w_{n+1}) \in {\Bbb H}^{n+1}$.
As a real hypersurface in ${\Bbb H}^{n+1}$, $\qsphere$ is endowed with
a hyper CR structure, whose underlying corank three bundle $Q$ is given by
$Q_q = \left\{ X\in {\Bbb H}^{n+1} \mid \overline{q}\cdot X = 0
\right\}$, $q\in \qsphere$. 
A standard choice of defining function for $\qsphere$ is $\rho(q)=|q|^2-1$,
and then the corresponding $\R^3$-valued one-form is given by
$\theta_S = \sum_{h=1}^{n+1} ( d\overline{q_h} q_h - \overline{q_h}dq_h)/2$, 
where we identify $\R^3={\rm Im}{\Bbb H}$. 
By (\ref{eq301}), we see that the Levi form
$\Levi_{\theta_S}$ is twice the standard Riemannian metric of $\qsphere$
restricted to $Q$.
In particular, the hyper CR structure of $\qsphere$ is strongly pseudoconvex.
We shall refer to $\theta_S$ as the {\em standard pseudohermitian structure} of $\qsphere$.

The sphere $\qsphere$ may be regarded as the boundary at infinity of 
quaternionic hyperbolic space $\qhyp$. 
The isometry group $G$ of $\qhyp$ is given by $G = Sp(n+1,1)/\{\pm I_{n+2}\}$.
As $G$ acts on $\qhyp$ transitively, 
we have a representation of $\qhyp$ as a coset space $G/K$, where
$K = Sp(n+1)\cdot Sp(1) = Sp(n+1)\times Sp(1)/\{\pm I_{n+2}\}$,
a maximal compact subgroup of $G$.
The action of $G$ extends to $\qsphere$, and we shall examine how it
transforms the hyper CR structure of $\qsphere$.
We represent $\gamma \in G$ by a matrix
$$
\left(\begin{array}{cc}a&b\\c&d\end{array}\right) \in Sp(n+1, 1),
$$
where $a^* a - c^* c = I_{n+1}$, $b^*b-d^*d = -1$, $a^*b-c^*d = 0$. 
The action of $\gamma$ on $\qsphere$ is then given by
$q = \mbox{}^t(q_1,\dots,q_{n+1}) \mapsto (aq+b)(cq+d)^{-1}$. 
By direct calculation, we obtain
\begin{equation}\label{groupaction}
\gamma^*\theta_S = \frac{1}{|cq+d|^2}\left(\frac{cq+d}{|cq+d|}\right)
\theta_S \left(\frac{cq+d}{|cq+d|}\right)^{-1}.
\end{equation}
This formula means that for each unit imaginary quaternion ${\bf v} \in S^2$, 
the CR structure $(Q_{\bf v}, I_{\bf v})$ is transformed as
$\gamma^* Q_{\bf v} = Q_{\sigma_\gamma^{-1}{\bf v}\sigma_\gamma}$, 
$\gamma^* I_{\bf v} = I_{\sigma_\gamma^{-1}{\bf v}\sigma_\gamma}$, 
where $\sigma_\gamma$ is the function of $q$ given by $\sigma_\gamma(q) = (cq+d)/|cq+d|$. 
Thus $G$ preserves the \qua CR structure of $\qsphere$. 
Likewise, if $\gamma\in K$, then $c=0$ and $\sigma_\gamma$ is constant. 
Therefore, $K$ preserves the canonical $S^2$-family of CR structures associated with the 
hyper CR structure of $\qsphere$. 
If $\gamma\in Sp(n)$ further, then $d= \pm 1$ and $\sigma_\gamma = \pm 1$. 
Therefore, $Sp(n)$ preserves each of the two CR structures constituting the hyper CR 
structure. 

For ${\bf v} \in S^2$, let $T_{\bf v}$ be the
vector field on $\qsphere$ defined by
$(T_{\bf v})_q = q{\bf v}^{-1}$, $q \in \qsphere$, 
and let $T_1 = T_{\bf i}$, $T_2 = T_{\bf j}$ and $T_3 = T_{\bf k}$.
One can check that $T_a$ satisfies $\theta_a(T_a) = 1$ and
$d\theta_a(T_a,X) = 0$ for all $X\in Q$.
The action of $\gamma\in G$ transforms $T_{\bf v}$ as
\begin{equation}\label{transform2}
\gamma^*T_{\bf v} = \left(\gamma^{-1}\right)_*T_{\bf v} 
= e^{-2f} \left[ T_{\sigma_\gamma^{-1}{\bf v}\sigma_\gamma} 
- 2 I_{\sigma_\gamma^{-1}{\bf v}\sigma_\gamma} d_bf^\# \right],
\end{equation}
where $f = -\log |cq+d|$, 
$d_bf$ denotes the restriction of $df$ to $Q$ and $d_bf^\#$ 
is the section of $Q$ dual to $d_bf$ with respect to $\Levi_{\theta_S}$ 
(cf.~\S 3, Remark \ref{canonical_triple_S^{4n+3}}). 

\bigskip\noindent
\Example Let $E$ be the real ellipsoid 
$$
E\,\, : \,\, \sum_{i=1}^{n+1} \left( a_i \left({z_i}^2 + {\overline{z_i}}^2 \right)
+ b_i z_i \overline{z_i}
+ c_i \left( {w_i}^2 + {\overline{w_i}}^2 \right) + d_i w_i \overline{w_i} 
\right) - 1 = 0, 
$$
where $a_i,\cdots, d_i$ are real, $b_i, d_i>0$ and we are using the complex coordinates 
$z_i$, $w_i$ of $\H^{n+1}$ to write the defining equation. 
Let $\theta = (\theta_1, \theta_2, \theta_3)$ be the $\R^3$-valued one-form 
compatible with the hyper CR structure of $E$, 
corresponding to the defining function chosen to be the left-hand side of 
the defining equation.  
By (\ref{eq301}), the Levi form is 
$$
\Levi_\theta = \sum_{i=1}^{n+1}
\left( b_i + d_i \right) \left(dz_i\cdot d\overline{z_i} + d\overline{w_i}\cdot
dw_i\right) 
$$
restricted to $Q$, and therefore the hyper CR structure of $E$ is strongly pseudoconvex. 

On the other hand, the complex Levi forms are 
\begin{eqnarray*}
\Levi_{\theta_1} &=& 2 \sum_{i=1}^{n+1} (b_i\, dz_i \cdot d\overline{z_i}
+ d_i\, d\overline{w_i} \cdot dw_i),\\
\Levi_{\theta_2} &=& \re \sum_{i=1}^{n+1} [
(b_i + d_i) ( dz_i\cdot d\overline{z_i} + d\overline{w_i} \cdot dw_i )\\
&& + 2 (a_i + c_i) ( dz_i\cdot dz_i + d\overline{w_i} \cdot d\overline{w_i} )],\\
\Levi_{\theta_3} &=& \re \sum_{i=1}^{n+1} [
(b_i + d_i) ( dz_i\cdot d\overline{z_i} + d\overline{w_i} \cdot dw_i )\\
&& + 2 (a_i - c_i) ( dz_i\cdot dz_i - d\overline{w_i} \cdot d\overline{w_i} )], 
\end{eqnarray*}
and these almost never coincide on $Q$; indeed, they coincide on $Q$ 
if and only if $b_i = d_i$ and $a_i = c_i = 0$ hold for $i=1,\dots,n+1$, 
that is, $E$ should be a ``quaternionic ellipsoid". 
We will revisit this example in \S 5, where we compare the quaternionic CR structure 
to Biquard's quaternionic contact structure \cite{biq}. 

\subsection{Quaternionic Heisenberg group and its deformation}

\Example\label{qheisen}
The {\em quaternionic Heisenberg group} $\qhgroup$ is the Lie group whose 
underlying manifold is ${\Bbb H}^n \times \im{\Bbb H}$ with coordinates
$(p,\tau) = (p_1,\dots,p_n, \tau)$ and whose group law is given by
$$
(p,\tau)\cdot(p^\prime, \tau^\prime) = \left(p+p^\prime,
\tau + \tau^\prime + \left(\overline{p}\cdot p^\prime
- \overline{p^\prime}\cdot p\right) \right).
$$
(There is a concise treatment of the complex Heisenberg group as
a CR manifold in \cite{jele}.)
\par
Write $p_\alpha = x_\alpha^0 - x_\alpha^1 {\bf i} - x_\alpha^2 {\bf j}
- x_\alpha^3 {\bf k}$
and $\tau = t_1 {\bf i} + t_2 {\bf j} + t_3 {\bf k}$.
The vector fields
{\Small
\begin{eqnarray*}
X_\alpha^0 &=&  \frac{\partial}{\partial x_\alpha^0}
             + 2x_\alpha^1 \frac{\partial}{\partial t_1}
             + 2x_\alpha^2 \frac{\partial}{\partial t_2}
             + 2x_\alpha^3 \frac{\partial}{\partial t_3},\quad
X_\alpha^1 \,\,=\,\,  \frac{\partial}{\partial x_\alpha^1}
             - 2x_\alpha^0 \frac{\partial}{\partial t_1}
             - 2x_\alpha^3 \frac{\partial}{\partial t_2}
             + 2x_\alpha^2 \frac{\partial}{\partial t_3},\\
X_\alpha^2 &=&  \frac{\partial}{\partial x_\alpha^2}
             + 2x_\alpha^3 \frac{\partial}{\partial t_1}
             - 2x_\alpha^0 \frac{\partial}{\partial t_2}
             - 2x_\alpha^1 \frac{\partial}{\partial t_3},\quad
X_\alpha^3 \,\,=\,\,  \frac{\partial}{\partial x_\alpha^3}
             - 2x_\alpha^2 \frac{\partial}{\partial t_1}
             + 2x_\alpha^1 \frac{\partial}{\partial t_2}
             - 2x_\alpha^0 \frac{\partial}{\partial t_3},\\
T_1 &=& 2 \frac{\partial}{\partial t_1},\quad
T_2 \,\,=\,\, 2 \frac{\partial}{\partial t_2},\quad
T_3 \,\,=\,\, 2 \frac{\partial}{\partial t_3}
\end{eqnarray*}
}
are left-invariant.
Let
$$
Q = \text{span} \{X_\alpha^a\}_{1\leq \alpha\leq n, 0\leq a\leq 3},\quad 
Q_a = Q \oplus {\Bbb R}T_b\oplus {\Bbb R}T_c, 
$$ 
and define complex structures $I_a$ on $Q_a$ by
$$
I_aX_\alpha^0 = X_\alpha^a,\,\, I_aX_\alpha^b = X_\alpha^c,\,\, 
I_aT_b = T_c. 
$$
Then the triple of (integrable) CR structures $(Q_a,I_a)$ gives a left-invariant
hyper CR structure on $\qhgroup$.
The $\im {\Bbb H}$-valued one-form
$$
\theta_H = \frac{1}{2}\left[d\tau + \sum_{\alpha=1}^n
\left(d\overline{p_\alpha} p_\alpha - \overline{p_\alpha}
dp_\alpha \right)\right]
$$
is left-invariant and compatible with the hyper CR structure.
The Levi form of $\theta_H$ is given by $\Levi_{\theta_H}(X_\alpha^a,
X_\beta^b) = 2\delta_{\alpha\beta}\delta_{ab}$.
Hence the hyper CR structure of $\qhgroup$ is strongly pseudoconvex. 
We shall refer to $\theta_H$ as the {\em standard pseudohermitian structure} of $\qhgroup$.
It is also worthwhile to mention that in this example the CR structures $(Q_a, I_a)$ are 
{\em not} strongly pseudoconvex. 

\medskip\noindent
\Remark The quaternionic CR and pseudohermitian structures of $\qsphere$ and $\qhgroup$ 
are related as follows. 
Let ${\frak P} = \left\{ (p^\prime, p_{n+1}) \in {\Bbb H}^{n+1}
\mid \re\,p_{n+1} = |p^\prime|^2 \right\}$. 
The mapping
$$
(q_1,\dots,q_{n+1}) \in \qsphere\setminus\{(0,\dots,0,-1)\} 
\mapsto
\left(q_1,\dots,q_n, 1-q_{n+1}\right) (1+q_{n+1})^{-1}\in
{\frak P}
$$
is a quaternionic analogue of Cayley transform. 
This mapping composed with
$(p^\prime,p_{n+1}) \mapsto (p^\prime, p_{n+1} - |p^\prime|^2)$
gives the equivalence mapping
\begin{eqnarray*}
&&F : (q_1,\dots,q_{n+1}) \in \qsphere\setminus\{(0,\dots,0,-1)\}\\
&&\mbox{}\mapsto
\left(q_1(1+q_{n+1})^{-1},\dots,q_n(1+q_{n+1})^{-1},(1+q_{n+1})^{-1}
- \left(1+\overline{q_{n+1}}\right)^{-1} \right) \in \qhgroup
\end{eqnarray*}
between \qua CR manifolds.
Hence $F^*\theta_H$ is a pseudohermitian structure (singular at
$(0,\dots,0,-1)$) for the \qua CR structure of $\qsphere$,
and thus has the form $\lambda \sigma \theta_S \sigma^{-1}$,
where $\lambda$ and $\sigma$ are respectively positive
and $Sp(1)$-valued functions.
Explicitly, we have
$\lambda = 1/|1+q_{n+1}|^2$, $\sigma = (1+q_{n+1})/|1+q_{n+1}|$. 

\Example\label{deformation_qheisen}
The hyper pseudohermitian structure of quaternionic Heisenberg group 
can be deformed by changing the definition of vector fields $X_\alpha^a$
as follows:
\begin{eqnarray*}
X_\alpha^0 &=&  \frac{\partial}{\partial x_\alpha^0}
             + A^1_\alpha x_\alpha^1 \frac{\partial}{\partial t_1}
             + A^2_\alpha x_\alpha^2 \frac{\partial}{\partial t_2}
             + A^3_\alpha x_\alpha^3 \frac{\partial}{\partial t_3},\\
X_\alpha^1 &=&  \frac{\partial}{\partial x_\alpha^1}
             - B^1_\alpha x_\alpha^0 \frac{\partial}{\partial t_1}
             - B^2_\alpha x_\alpha^3 \frac{\partial}{\partial t_2}
             + B^3_\alpha x_\alpha^2 \frac{\partial}{\partial t_3},\\
X_\alpha^2 &=&  \frac{\partial}{\partial x_\alpha^2}
             + C^1_\alpha x_\alpha^3 \frac{\partial}{\partial t_1}
             - C^2_\alpha x_\alpha^0 \frac{\partial}{\partial t_2}
             - C^3_\alpha x_\alpha^1 \frac{\partial}{\partial t_3},\\
X_\alpha^3 &=&  \frac{\partial}{\partial x_\alpha^3}
             - D^1_\alpha x_\alpha^2 \frac{\partial}{\partial t_1}
             + D^2_\alpha x_\alpha^1 \frac{\partial}{\partial t_2}
             - D^3_\alpha x_\alpha^0 \frac{\partial}{\partial t_3}, 
\end{eqnarray*}
where $A^a_\alpha, B^a_\alpha, C^a_\alpha, D^a_\alpha$ are real constants. 
We define vector fields $T_a$, bundles $Q$, $Q_a$ and complex structures $I_a$ on $Q_a$ 
as in Example \ref{qheisen}.
Then the triple of almost CR structures $(Q_a, I_a)$ gives 
an almost hyper CR structure on $\R^{4n}\times \R^3$, and  
always satisfies \eqref{integrability_condition0}. 
It satisfies 
\eqref{integrability_condition} (or more strongly, $(Q_a, I_a)$ are 
CR structures) if and only if 
$A^a_\alpha + B^a_\alpha + C^a_\alpha + D^a_\alpha$ 
does not depend on $a$ (though may depend on $\alpha$). 
Henceforth we assume this condtion is satisfied, and therefore, 
we obtain a hyper CR structure. 
Then the $\R^3$-valued one-form $\theta = (\theta_1,\theta_2,\theta_3)$ 
given by 
\begin{eqnarray*}
\theta_1 &=& \frac{1}{2} \left[ dt_1 + \sum_\alpha 
\left( -A^1_\alpha x^1_\alpha dx^0_\alpha
+ B^1_\alpha x^0_\alpha dx^1_\alpha - C^1_\alpha x^3_\alpha dx^2_\alpha 
+D^1_\alpha x^2_\alpha dx^3_\alpha \right) \right],\\ 
\theta_2 &=& \frac{1}{2} \left[ dt_2 + \sum_\alpha 
\left( -A^2_\alpha x^2_\alpha dx^0_\alpha + B^2_\alpha x^3_\alpha dx^1_\alpha
+ C^2_\alpha x^0_\alpha dx^2_\alpha - D^2_\alpha x^1_\alpha dx^3_\alpha \right) \right],\\
\theta_3 &=& \frac{1}{2} \left[ dt_2 + \sum_\alpha 
\left( -A^3_\alpha x^3_\alpha dx^0_\alpha - B^3_\alpha x^2_\alpha dx^1_\alpha
+ C^3_\alpha x^1_\alpha dx^2_\alpha + D^3_\alpha x^0_\alpha dx^3_\alpha \right) \right]
\end{eqnarray*} 
is compatible with the hyper CR structure and satisfies 
$\theta_a(T_b) = \delta_{ab}$. 
The complex Levi forms are 
\begin{eqnarray*}
\Levi_{\theta_1} &=& \frac{1}{2} \sum_\alpha \left\{
\left(A^1_\alpha+B^1_\alpha\right)\left(\left(dx^0_\alpha\right)^2 
+ \left(dx^1_\alpha\right)^2\right)
+ \left(C^1_\alpha+D^1_\alpha\right)\left(\left(dx^2_\alpha\right)^2 
+ \left(dx^3_\alpha\right)^2\right) \right\},\\ 
\Levi_{\theta_2} &=& \frac{1}{2} \sum_\alpha \left\{
\left(A^2_\alpha+C^2_\alpha\right)\left(\left(dx^0_\alpha\right)^2 
+ \left(dx^2_\alpha\right)^2\right)
+ \left(B^2_\alpha+D^2_\alpha\right)\left(\left(dx^3_\alpha\right)^2 
+ \left(dx^1_\alpha\right)^2\right) \right\},\\ 
\Levi_{\theta_3} &=& \frac{1}{2} \sum_\alpha \left\{
\left(A^3_\alpha+D^3_\alpha\right)\left(\left(dx^0_\alpha\right)^2 
+ \left(dx^3_\alpha\right)^2\right)
+ \left(B^3_\alpha+C^3_\alpha\right)\left(\left(dx^1_\alpha\right)^2 
+ \left(dx^2_\alpha\right)^2\right) \right\}, 
\end{eqnarray*} 
and the Levi form is
$$
\Levi_{\theta} = \frac{1}{4} \sum_\alpha\Lambda_\alpha 
\left( \left(dx^0_\alpha\right)^2 + \left(dx^1_\alpha\right)^2 
+ \left(dx^2_\alpha\right)^2 + \left(dx^3_\alpha\right)^2 \right), 
$$
where we set $\Lambda_\alpha = A^a_\alpha + B^a_\alpha + C^a_\alpha + D^a_\alpha$. 
Suppose now that $\Lambda_\alpha>0$ for all $\alpha$, so that 
the hyper CR structure is strongly pseudoconvex. 
We will revisit this example in \S 3, \S 5. 

\subsection{Principal bundle over a hypercomplex manifold}
Let $(N,I,J,K)$ be a hypercomplex manifold of real dimension $4n$,
that is, $I,J,K$ are (integrable) complex structures on $N$ satisfying $IJ=-JI=K$. 
Write $I_1 = I$, $I_2 = J$ and $I_3 = K$.
Let $G$ be a Lie group of dimension three, and 
let $\pi:M \rightarrow N$ be a principal $G$-bundle over $N$ 
with connection form $\theta$ and the corresponding curvature form $\Omega$.
Via an idetification of the Lie algebra ${\mathfrak g}$ of $G$ with ${\mathbb R}^3$ 
as vector spaces, we write $\theta$ and $\Omega$
as $\theta = (\theta_1,\theta_2,\theta_3)$ and 
$\Omega=(\pi^*\Omega_1, \pi^*\Omega_2, \pi^*\Omega_3)$, respectively.
Assume the following two conditions: 
\begin{enumerate}
\renewcommand{\theenumi}{\roman{enumi}}
\renewcommand{\labelenumi}{(\theenumi)}
\item The two-forms $\Omega_a$ on $N$ are invariant by $I_a$: 
$$
\Omega_a(I_aX,I_aY) = \Omega_a(X,Y),\quad X,Y\in TN.
$$ 
\item A hyperhermitian metric $g$ on $N$ is chosen so that
the fundamental two-forms $F_a = g(I_a\cdot, \cdot)$ 
are given by
$$
F_a(X,Y) 
= \frac{1}{2}(\Omega_a(X,Y) - \Omega_a(I_bX,I_bY)),\quad X,Y \in TN. 
$$
\end{enumerate} 
Here, the indices $a,b$ are so that $(a,b,c)$ is a cyclic permutation of $(1,2,3)$.
%
Let $Q_a$ be the kernel of $\theta_a$. 
Then $Q = \cap_{a=1}^3 Q_a$ is the horizontal distribution for the connection form $\theta$.
Take a triple of vertical vector fields $(T_1,T_2,T_3)$ on $M$ satisfying 
$\theta_a(T_b) = \delta_{ab}$. 
Then $Q_a$ is expressed as
$
Q_a = Q \oplus {\mathbb R}T_b \oplus {\mathbb R}T_c.
$
We define complex structures $I_a$ on $Q_a$ by
$
I_a\widetilde{X} = \widetilde{I_aX}
$
and
$
I_aT_b = T_c,
$
where $\widetilde{X}$ denotes
the horizontal lift of
a vector field $X$ on $N$.
Then it is straightforward to verify that the almost CR structures $(Q_a,I_a)$ 
are integrable, and in particular, they satisfy \eqref{integrability_condition0}, 
\eqref{integrability_condition}. 

Thus we obtain the following 

\begin{Proposition}
\label{bundleconstruction} 
Let $G$ be a Lie group of dimension three. 
Let $\pi:M \rightarrow N$ be a principal $G$-bundle over a hyperhermitian manifold $N$ 
with connection form $\theta$ and the corresponding curvature form $\Omega$, satisfying 
the conditions (i), (ii) as above. 
Then $\{(Q_a,I_a)\}_{a=1,2,3}$ defined above is a 
hyper CR structure on the total space $M$, and
$\theta$ is compatible with it. 
The Levi form $\Levi_\theta$ of $\theta$ is given by the pull-back of $g$, restricted to $Q$: 
$\Levi_\theta=(\pi^*g)|_{Q \times Q}$. 
In particular, the hyper CR structure of $M$ is strongly pseudoconvex, and together 
with $\theta$, gives a hyper pseudohermitian structure on $M$. 
\end{Proposition}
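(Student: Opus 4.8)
The plan is to verify in turn the four assertions of the proposition: that the pair $(Q_1,I_1),(Q_2,I_2)$ is an almost hyper CR structure whose induced third structure is $(Q_3,I_3)$; that the resulting hyper CR structure is integrable; that $\theta$ is compatible with it; and that $\Levi_\theta=(\pi^*g)|_{Q\times Q}$, from which strong pseudoconvexity and the pseudohermitian property follow at once. First I would note that each $\theta_a$ is a nowhere-vanishing component of the connection form, so $Q_a=\ker\theta_a$ is a corank-one subbundle, and the assignments $I_a\widetilde X=\widetilde{I_aX}$, $I_aT_b=T_c$ (hence $I_aT_c=-T_b$) define genuine complex structures with $I_a^2=-\id$ on $Q_a$; thus each $(Q_a,I_a)$ is an almost CR structure. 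From $TM=Q\oplus\R T_1\oplus\R T_2\oplus\R T_3$ one reads off $Q_a=Q\oplus\R T_b\oplus\R T_c$ and the transversality of $Q_1$ and $Q_2$. For axiom (ii) one computes $Q_1\cap Q_2=Q\oplus\R T_3$, whence $I_1(Q_1\cap Q_2)=Q\oplus\R T_2$ and $I_2(Q_1\cap Q_2)=Q\oplus\R T_1$; their intersection is $Q$, on which $I_1,I_2$ are the horizontal lifts of $I,J$ and hence anticommute because $I,J$ do on $N$. The same two computations identify the third structure produced by the general construction: its underlying bundle is $I_1(Q_1\cap Q_2)+I_2(Q_1\cap Q_2)=Q\oplus\R T_1\oplus\R T_2=\ker\theta_3=Q_3$, and a check on the frame $Q\cup\{T_1,T_2,T_3\}$ shows that $-I_2I_1$ on $Q\oplus\R T_2$ and $I_1I_2$ on $Q\oplus\R T_1$ both agree with $I_3$. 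In particular $\cap_aQ_a$ is the horizontal distribution $Q$, as claimed.

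Integrability is the verification already indicated in the paragraph preceding the proposition. Concretely, for horizontal lifts the structure equation yields $[\widetilde X,\widetilde Y]=\widetilde{[X,Y]}-\sum_a\Omega_a(X,Y)T_a$, so the $T_a$-component of the vertical part of $[X,Y]-[I_aX,I_aY]$ is $-(\Omega_a(X,Y)-\Omega_a(I_aX,I_aY))$, which vanishes by (i); since the horizontal part lies in $Q\subset Q_a$ automatically, this gives \eqref{integrability_condition0} (the general case following by $C^\infty$-linearity modulo $\Gamma(Q_a)$, as $Q$ is locally spanned by horizontal lifts). For \eqref{integrability_condition} the horizontal part equals $-\widetilde{I_aN_{I_a}(X,Y)}$ and vanishes because $N$ is hypercomplex, while the remaining vertical part is a combination of the $I_a$-invariance defects of the $\Omega_d$ $(d\ne a)$ that vanishes upon using (i), the identity $\Omega_a(\,\cdot\,,I_a\,\cdot\,)=-\Omega_a(I_a\,\cdot\,,\,\cdot\,)$, and the quaternion relations among the $I_a$ — exactly as in the real-hypersurface computation of \S2.

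Compatibility of $\theta$ is then immediate: $\ker\theta_a=Q_a$ by construction, and \eqref{eq101} is checked on the frame $Q\cup\{T_1,T_2,T_3\}$ using $\theta_a|_Q=0$, $I_aQ=Q$, $\theta_a(T_b)=\delta_{ab}$, $I_aT_b=T_c$, $I_aT_c=-T_b$. For the Levi form, let $X,Y$ be horizontal; since $[\theta,\theta]$ annihilates horizontal vectors, $d\theta_1(\widetilde X,\widetilde{I_1Y})=\Omega_1(X,I_1Y)$ and $d\theta_1(\widetilde{I_2X},\widetilde{I_3Y})=\Omega_1(I_2X,I_3Y)$, so by the defining formula for the Levi form
\[
\Levi_\theta(\widetilde X,\widetilde Y)=\tfrac12\bigl(\Omega_1(X,I_1Y)+\Omega_1(I_2X,I_3Y)\bigr).
\]
Using the quaternion relation $I_2I_1=-I_3$ this equals $\tfrac12\bigl(\Omega_1(X,I_1Y)-\Omega_1(I_2X,I_2I_1Y)\bigr)=F_1(X,I_1Y)$ by condition (ii), and $F_1(X,I_1Y)=g(I_1X,I_1Y)=g(X,Y)$ because $g$ is hyperhermitian. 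Extending by $C^\infty$-linearity, $\Levi_\theta=(\pi^*g)|_{Q\times Q}$; since $g$ is a positive-definite hyperhermitian metric, $\Levi_\theta$ is positive-definite and quaternion-hermitian, so the hyper CR structure is strongly pseudoconvex and, together with $\theta$, defines a hyper pseudohermitian structure.

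I expect the only genuine obstacle to lie in the bookkeeping for the vertical part of \eqref{integrability_condition} — matching up the curvature terms through the $I$-invariance relations and the quaternion relations — but this is precisely the computation the paper has already flagged as straightforward and which runs parallel to the real-hypersurface case; the one point demanding real care is keeping the sign conventions for $\Omega$, $d\theta$ and the quaternion products mutually consistent in the identification of $\Levi_\theta$ with $\pi^*g$.
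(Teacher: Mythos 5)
Your proof is correct and follows the route the paper itself intends: the paper leaves the verification as ``straightforward,'' and your computation of $[\widetilde X,\widetilde Y]=\widetilde{[X,Y]}-\sum_a\Omega_a(X,Y)T_a$, the resulting identification of the vertical components with curvature terms, and the identification $\Levi_\theta(\widetilde X,\widetilde Y)=F_1(X,I_1Y)=g(X,Y)$ via condition (ii) and the quaternion relations are exactly the intended details. One correction to your bookkeeping: for the vertical part of \eqref{integrability_condition} you list only condition (i), the identity $\Omega_a(\cdot,I_a\cdot)=-\Omega_a(I_a\cdot,\cdot)$ and the quaternion relations as the needed ingredients, but condition (ii) is indispensable there as well. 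For instance, with $a=1$ the $T_2$-component of the expression is
$\bigl(\Omega_3(X,Y)-\Omega_3(I_1X,I_1Y)\bigr)+\bigl(\Omega_2(X,I_1Y)+\Omega_2(I_1X,Y)\bigr)
=2F_3(X,Y)+\bigl(\Omega_2(X,I_1Y)+\Omega_2(I_1X,Y)\bigr)$,
and it is only condition (ii) applied to $F_2$ (with $X$, $Y$ replaced by $I_1X$, $I_1Y$), combined with (i) for $\Omega_2$, that identifies the second bracket with $-2g(I_3X,Y)=-2F_3(X,Y)$; without (ii) the forms $\Omega_2$ and $\Omega_3$ are unrelated and this component has no reason to vanish.
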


As a concrete example, the standard hyper pseudohermitian structure of the quaternionic 
Heisenberg group 
${\mathcal H}^{4n+3}$, which is an $\im \H$-bundle over the hypercomplex manifold 
${\mathbb H}^n$, can be understood by the bundle construction as above. 


An example with compact total space follows. 

\medskip\noindent
\Example\label{T^3_bundle} ($T^3$-bundle over 
$S^1 \times S^3$) \ 
Let $\widetilde{N} = \H \setminus \{0\}$ with complex coordinates $(z,w)$, 
and let $(I,J,K)$ be the standard hypercomplex structure of $\widetilde{N}$
as in \eqref{H^{n+1}_hypercomplex_structure}. 
Let $g$ be the hyperhermitian metric on $\widetilde{N}$ defined by 
$$
g = \dfrac{2(|dz|^2 + |dw|^2)}{|z|^2+|w|^2}.
$$
Then the fundamental forms $F_1, F_2, F_3$ 
are given by 
\begin{eqnarray*}
&F_1 = \dfrac{\sqrt{-1}(dz \wedge d\bar{z} + dw \wedge d\bar{w})} {|z|^2 + |w|^2}, \quad 
F_2 = \dfrac{dz \wedge dw + d\bar{z} \wedge d\bar{w}} {|z|^2 + |w|^2},& \\ 
&F_3 = \dfrac{\sqrt{-1}(d\bar{z} \wedge d\bar{w} - dz \wedge dw)} {|z|^2 + |w|^2},& 
\end{eqnarray*}
respectively. 

Let $\mu = -\log(|z|^2+|w|^2)$, a smooth function on $\widetilde{N}$, 
and define three two-forms $\Omega_a$ on $\widetilde{N}$ by $\Omega_a = d(I_ad\mu)$. 
They are given explicitly by
\begin{eqnarray*}
\Omega_1 
&=& \dfrac{2\sqrt{-1}(|w|^2dz \wedge d\bar{z} + |z|^2dw \wedge d\bar{w})} {(|z|^2+|w|^2)^2} 
- \dfrac{2\sqrt{-1}(\bar{z}wdz \wedge d\bar{w} - z\bar{w}d\bar{z} \wedge dw)} {(|z|^2+|w|^2)^2}, \\ 
\Omega_2 
&=& \dfrac{dz \wedge dw + d\bar{z} \wedge d\bar{w}}{|z|^2+|w|^2} - \dfrac{(zw-\bar{z}\bar{w})
(dz \wedge d\bar{z} - dw \wedge d\bar{w})} {(|z|^2+|w|^2)^2} \\
&& - \dfrac{(\bar{z}^2+w^2)dz \wedge d\bar{w} + (z^2+\bar{w}^2)d\bar{z} \wedge dw} {(|z|^2+|w|^2)^2},\\ 
\Omega_3 
&=& \dfrac{\sqrt{-1}(d\bar{z} \wedge d\bar{w} - dz \wedge dw)} {|z|^2+|w|^2} 
+ \dfrac{\sqrt{-1}(zw+\bar{z}\bar{w})(dz \wedge d\bar{z} - dw \wedge d\bar{w})} {(|z|^2+|w|^2)^2} \\
&& - \sqrt{-1} \dfrac{(\bar{z}^2-w^2)dz \wedge d\bar{w} - (z^2-\bar{w}^2)d\bar{z} \wedge dw} {(|z|^2+|w|^2)^2}.
\end{eqnarray*}


Let $N:= \widetilde{N}/\langle \alpha \rangle$, where $\alpha$ is a complex constant with 
$|\alpha| > 1$, acting on $\widetilde{N}$ by 
$$
\alpha{\cdot}(z,w) := ({\alpha}z,\overline{\alpha}w), \quad (z,w) \in \widetilde{N},
$$
and $\langle \alpha \rangle$ is the infinite cyclic group generated by $\alpha$. 
Then $N$ is a smooth manifold diffeomorphic to $S^1 \times S^3$, and $(g, I,J,K)$ descends 
to a hyperhermitian structure on $N$. 
Thus we obtain a hyperhermitian Hopf surface $(N,g,I,J,K)$. 
Note that though the function $\mu$ does not descend to a function on $N$, the differential $d\mu$ 
descends to a one-form on $N$, and therefore $\Omega_a$ descend to two-forms on $N$. 

Let $\pi: M=N \times T^3 \rightarrow N$ be the trivial $T^3$-bundle with fibre-coordinates 
$(t_1,t_2,t_3)$. 
(Therefore, $M$ is diffeomorphic to $S^3 \times T^4$.) 
Define an ${\mathbb R}^3$-valued one-form $\theta=(\theta_1,\theta_2,\theta_3)$ on $M$ by 
$\theta_a = dt_a + \pi^*I_ad\mu$. 
Then $\theta$ is a connection one-form in the bundle $\pi: M \rightarrow N$ with curvature form 
$\Omega = (\pi^*\Omega_1, \pi^*\Omega_2, \pi^*\Omega_3)$. 
It is straightforward to verify that the hyperhermitian Hopf surface $N$ and the forms $\Omega_a$ 
satisfy the conditions (i) and (ii) before Proposition~\ref{bundleconstruction}. 
Therefore, $M$ comes equipped with a hyper pseudohermitian structure. 
We will revisit this example in \S 3, \S 5. 

\medskip
The above construction of the hyper pseudohermitian structure 
on $S^3\times T^4$ is a special case of the following more gereral one. 
An {\em HKT manifold} is a hyperhermitian manifold $(N,g,I,J,K)$, characterized 
by the property that the fundamental forms $F_1,F_2,F_3$ satisfy $IdF_1 = JdF_2 = KdF_3$, 
where $I\omega = \omega(I \cdot,\dots,I \cdot )$ for a $k$-form $\omega$.
By a result of Banos-Swann \cite{basw}, there exists an {\em HKT-potential} $\mu$, that is,
a locally defined function $\mu$ on $N$ such that
$$
F_a = \frac{1}{2} \left( d(I_a d\mu) - I_b d(I_a d\mu)\right), 
$$
where the indices $a,b$ are as before. 
If $d(I_a d\mu)$ are globally defined on $N$ and determine integral cohomology classes of $N$, 
as in the preceding example, 
then there exists a principal $T^3$-bundle $\pi\colon M \rightarrow N$
with connection form $\theta$ 
whose curvature form $d\theta$ coincides with $( \pi^*d(I_a d\mu) )$.
Now Proposition \ref{bundleconstruction} applies, and we obtain a hyper pseudohermitian structure 
on the total space $M$.
This construction is a generalization of that due to Hernandez \cite{hern} for hyperk\"ahler manifolds. 

\section{Canonical connection}
In this section we shall construct a \qua analogue of the \TW
connection \cite{tana}, \cite{web} in CR geometry.
Throughout this section, we shall assume that the hyper and \qua
CR structures are strongly pseudoconvex.
Since our construction is modelled on that in the CR case, we first
review it briefly.
\par
Let $(M,\theta)$ be a pseudohermitian manifold. 
(Recall that the underlying almost CR structure is assumed to be partially integrable.) 
As in \cite{sta}, let $T$ be an {\em arbitrary} transverse vector field 
such that $\theta(T)=1$; except for this point,
we follow the explanation of the \TW connection due to Rumin \cite{rum}, where $T$ is the Reeb field
from the beginning. 
For each $k>0$, define a Riemannian metric $g_{M,k}$
on $M$ by
$$
g_{M,k} = g + k\theta^2,
$$
where $g$ is extended to a positive semidefinite form on $TM$ by defining
$g(T,\cdot) = 0$.
There is a unique connection $\nabla$ which satisfies
$\nabla g_{M,k} = 0$ for all $k$ and among such connections, has as small torsion as possible.
It is characterized by the following conditions:
\begin{enumerate}
\renewcommand{\theenumi}{\roman{enumi}}
\renewcommand{\labelenumi}{(\theenumi)}
\item the subbundle $Q$ is preserved by $\nabla$;
\item $g$ and $T$ are $\nabla$-parallel;
\item the torsion tensor $\Tor$ of $\nabla$ satisfies
\begin{enumerate}
\renewcommand{\theenumi}{\alph{enumi}}
\renewcommand{\labelenumi}{(\theenumi)}
\item $\Tor(X,Y)_Q = 0$,\quad $X$, $Y\in Q$;
\item $X\in Q \mapsto \Tor(T,X)_Q \in Q$ is $g$-symmetric,
\end{enumerate}
\end{enumerate}
where $E_Q$ denotes the $Q$-component of a tangent vector $E$
with respect to the splitting $TM = Q \oplus \R T$. 
It follows from $\nabla g=0$ and (iii-a) that for $X,Y\in \Gamma(Q)$,
$\nabla_X Y$ is given by
\begin{eqnarray}\label{tw1}
2 g(\nabla_X Y,Z) &=& X g(Y,Z) + Y g(X,Z) - Z g(X,Y)
+ g([X,Y]_Q,Z)\\
&& - g([X,Z]_Q,Y) - g([Y,Z]_Q,X)\quad
\mbox{for all $Z\in \Gamma(Q)$}. \nonumber
\end{eqnarray}
\par
We now determine $T$ so that the corresponding connection $\nabla$
be as close to being a unitary connection as possible.
Since any orthogonal connection on a hermitian line bundle is unitary,
this step does not work when $n=1$.
Hence we assume $n\geq 2$ hereafter.
Fix an arbitrary $T$, and write $\widehat{T} = T + 2JV$ for
$V\in \Gamma(Q)$.
Then by (\ref{tw1}), the corresponding connections $\nabla$ and
$\widehat{\nabla}$ are related by
\begin{equation}\label{tw2}
\widehat{\nabla}_X Y = \nabla_X Y + g(JX,Y)JV - g(JV,Y)JX - g(JV,X)JY.
\end{equation}
Let $\{e_1,\dots,e_n\}$ be a local unitary basis for $Q^{1,0}$,
and write
$$
\nabla e_i = \sum_{j=1}^n \left(\omega_{i\bar{j}}e_j + \omega_{ij}
\overline{e_j}\right),\quad
\widehat{\nabla} e_i = \sum_{j=1}^n \left(\widehat{\omega}_{i\bar{j}}e_j
+ \widehat{\omega}_{ij}\overline{e_j}\right).
$$
Note that $\widehat{\nabla}$ is a unitary connection if and only if
$\widehat{\omega}_{ij} = 0$ for all $i,j$.
Using (\ref{tw2}) we obtain
$$
\widehat{\omega}_{ij}(e_k) = \omega_{ij}(e_k),\quad 
\widehat{\omega}_{ij}(\overline{e_k}) = \omega_{ij}(\overline{e_k})
+ \delta_{kj}\overline{V_i} - \delta_{ki}\overline{V_j},
$$
where we write $V = \sum_{i=1}^n (V_i e_i + \overline{V_i}
\overline{e_i} )$.
$\widehat{\omega}_{ij}(e_k)$ are independent of $V$, and they all vanish
if and only if the underlying almost CR structure is integrable, while $\widehat{\omega}_{ij}
(\overline{e_k})$
can always be made zero by an appropriate choice of $V$.
Indeed, using (\ref{tw1}) and $d(d\theta) (e_i, e_j, \overline{e_k}) = 0$,
we obtain
\begin{equation}\label{tw4}
\omega_{ij}(\overline{e_k}) = \frac{1}{2}(\delta_{ki}d\theta(T, e_j)
- \delta_{kj}d\theta(T, e_i)).
\end{equation}
Hence, by choosing
\begin{equation}\label{tw3}
\overline{V_i} = \frac{1}{2} d\theta(T, e_i), \quad i=1,\dots,n,
\end{equation}
we can achieve $\widehat{\omega}_{ij}(\overline{e_k}) = 0$.
Note that (\ref{tw3}) is equivalent to $\widehat{T}$ being the Reeb
field associated with $\theta$.
In particular, the resulting connection $\widehat{\nabla}$ is the
\TW connection, as generalized to the partially integrable case by
Tanno \cite{tann}.

We now turn to the \qua case, and first treat the hyper CR case.
So let $(M,\theta)$ be a strongly pseudoconvex hyper pseudohermitian manifold. 
As above, our construction proceeds in two steps:
first, for each choice of an admissible three-plane field $Q^\perp$, we construct a certain 
uniquely determined connection $\nabla$ on $TM$. 
Next we determine $Q^\perp$ so that, when restricted to a connection on $Q$, $\nabla$ be ``as close to 
an $Sp(n)\cdot Sp(1)$-connection as possible." 

Let $g$ denote the Levi form of $\theta$; it is a metric on $Q$. 
Let $Q^\perp$ be an admissible three-plane field, so that we have the splitting 
\begin{equation}\label{splitting}
TM = Q \oplus Q^\perp. 
\end{equation}
Set $g^\perp:={\theta_1}^2 + {\theta_2}^2 + {\theta_3}^2$, and denote its restriction to $Q^\perp$ by 
the same symbol. 
We define a family of Riemannian metrics $g_{M, k}$ on $M$ by $g_{M,k} = g + k g^\perp$, 
where $k>0$ and $g$ is extended to a positive semidefinite form on $TM$ by defining $g(U,\cdot) = 0$ 
for all $U\in Q^\perp$. 
The splitting \eqref{splitting} is orthogonal with respect
to all $g_{M, k}$.
As in the CR case, there is a unique connection $\nabla$ which satisfies $\nabla g_{M,k} = 0\quad 
\mbox{for all $k$}$ and among such connections, has as small torsion as possible.
We state a characterization of this connection as

\begin{Proposition}\label{noncanconn1}
Let $(M,\theta)$ be a 
hyper pseudohermitian manifold, 
and let $Q^\perp$ be an admissible three-plane field. 
Then there exists a unique connection $\nabla$ on $TM$ satisfying  the following conditions:
\begin{enumerate}
\renewcommand{\theenumi}{\roman{enumi}}
\renewcommand{\labelenumi}{(\theenumi)}
\item the subbundles $Q$ and $Q^\perp$ are preserved by $\nabla$; 
\item $g$ and $g^\perp$ are $\nabla$-parallel; 
\item for $X, Y \in Q$ and $U,V \in Q^\perp$, 
\begin{enumerate}
\renewcommand{\theenumi}{\alph{enumi}}
\renewcommand{\labelenumi}{(\theenumi)}
\item $\Tor(X,Y)_Q = 0$;
\item $\Tor(U,V)_{Q^\perp} = 0$; 
\item $X \in Q \mapsto \Tor(U,X)_Q \in Q$ is $g$-symmetric;
\item $U \in Q^\perp \mapsto \Tor(U,X)_{Q^\perp} \in Q^\perp$ 
is $g^\perp$-symmetric, 
\end{enumerate}
\end{enumerate}
where $E_Q$ and $E_{Q^\perp}$ respectively denote the $Q$- and $Q^\perp$-components 
of a tangent vector $E$ with respect to the splitting \eqref{splitting}. 
\end{Proposition}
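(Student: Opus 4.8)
The plan is to follow the template of the Tanaka--Webster construction recalled above, but carried out block by block with respect to the splitting \eqref{splitting}. First I would observe that a connection $\nabla$ on $TM$ satisfies (i) and (ii) precisely when it preserves \eqref{splitting} and satisfies $\nabla g=\nabla g^\perp=0$ (equivalently, $\nabla g_{M,k}=0$ for all $k>0$, since $g_{M,k_1}-g_{M,k_2}=(k_1-k_2)\,g^\perp$). Such connections exist: take the direct sum $\nabla^0=\nabla^Q\oplus\nabla^{Q^\perp}$ of any metric connection on the bundle $(Q,g)$ and any metric connection on $(Q^\perp,g^\perp)$; here one uses that $g$ is definite (strong pseudoconvexity) and that $g^\perp$ is positive definite on $Q^\perp$, the latter because $\theta_a(T_b)=0$ whenever $a\ne b$. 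The connections satisfying (i)--(ii) then form an affine space modelled on the space of tensors $A\in\Gamma(T^*M\otimes\mathrm{End}(TM))$ that are block-diagonal for \eqref{splitting} with $A_E|_Q\in\mathfrak{so}(Q,g)$ and $A_E|_{Q^\perp}\in\mathfrak{so}(Q^\perp,g^\perp)$ for every $E$, and under $\nabla=\nabla^0+A$ the torsion transforms as $\Tor(E,F)=\Tor^0(E,F)+A_EF-A_FE$. The whole problem is thus to show that conditions (iii-a)--(iii-d) single out a unique such $A$.

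The key point is that, because $A$ is block-diagonal, the four conditions decouple into four independent linear problems for the four blocks of $A$, namely $\alpha=A|_{Q}$ valued in $\mathfrak{so}(Q,g)$, $\delta=A|_{Q^\perp}$ valued in $\mathfrak{so}(Q^\perp,g^\perp)$, $\beta=A|_{Q^\perp}$ valued in $\mathfrak{so}(Q,g)$, and $\gamma=A|_{Q}$ valued in $\mathfrak{so}(Q^\perp,g^\perp)$. Computing the relevant torsion components --- and noting that, since $\nabla$ preserves $Q$, the components $\Tor(X,Y)_{Q^\perp}=-[X,Y]_{Q^\perp}$ ($X,Y\in Q$) and $\Tor(U,V)_Q=-[U,V]_Q$ ($U,V\in Q^\perp$) are independent of $A$ and so carry no condition --- one finds: (iii-a) reads $\alpha_XY-\alpha_YX=-\Tor^0(X,Y)_Q$ and involves only $\alpha$; (iii-b) reads $\delta_UV-\delta_VU=-\Tor^0(U,V)_{Q^\perp}$ and involves only $\delta$; (iii-c) says that for each $U\in Q^\perp$ the endomorphism $X\mapsto\Tor^0(U,X)_Q+\beta_UX$ of $Q$ is $g$-symmetric, and involves only $\beta$; and (iii-d) says that for each $X\in Q$ the endomorphism $U\mapsto\Tor^0(U,X)_{Q^\perp}-\gamma_XU$ of $Q^\perp$ is $g^\perp$-symmetric, and involves only $\gamma$. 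Each subproblem has a unique solution: for (iii-c), since the $g$-symmetric and $g$-skew endomorphisms of $Q$ are complementary, $\beta_U$ must be minus the $g$-skew part of $\Tor^0(U,\cdot)_Q$, and linearity in $U$ then determines $\beta$ uniquely, and similarly $\gamma$ from (iii-d); for (iii-a) and (iii-b) one invokes the standard fact that the map $Q^*\otimes\mathfrak{so}(Q,g)\to\Lambda^2Q^*\otimes Q$, $\alpha\mapsto\bigl((X,Y)\mapsto\alpha_XY-\alpha_YX\bigr)$, is an isomorphism --- injectivity being the familiar three-term cyclic argument after lowering an index with $g$, the two spaces having equal dimension --- which determines $\alpha$, and likewise $\delta$. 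Assembling the four blocks produces a unique admissible $A$, hence the unique connection $\nabla=\nabla^0+A$; tracing through these identifications also yields the explicit Koszul-type formula for $\nabla_XY$ with $X,Y\in Q$, analogous to \eqref{tw1}, along with analogous formulas for the mixed and $Q^\perp$-components.

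I do not anticipate a genuine obstacle: once $A$ is written in block form the problem reduces to fibrewise linear algebra, and the only structural ingredient is the isomorphism $Q^*\otimes\mathfrak{so}(Q)\cong\Lambda^2Q^*\otimes Q$, which is exactly the fact underlying the uniqueness of the Levi--Civita and Tanaka--Webster connections. The step demanding the most care is simply the bookkeeping: verifying that each block of $A$ is pinned down by exactly one of (iii-a)--(iii-d), correctly identifying the torsion components that are forced (and hence unconstrained), and confirming that the whole construction is tensorial and therefore globally well-defined. It is also worth remarking, to connect with the motivation preceding the statement, that (iii-a)--(iii-d) say exactly that $\nabla$ annihilates every torsion component that can be annihilated by a connection satisfying (i)--(ii), i.e., that $\nabla$ has minimal torsion among such connections.
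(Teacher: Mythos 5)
Your proof is correct, and it arrives at the same four-fold decoupling as the paper, but by a different mechanism. The paper argues directly from the axioms: (i), (ii) and (iii-a) force the Koszul-type formula \eqref{eq209} for $g(\nabla_XY,Z)$ with $X,Y,Z\in\Gamma(Q)$, and likewise (iii-b), (iii-c), (iii-d) force the explicit formulas \eqref{eq210}, \eqref{eq211}, \eqref{eq212} for $g^\perp(\nabla_UV,W)$, $g(\nabla_UX,Y)$ and $g^\perp(\nabla_XU,V)$; uniqueness is then immediate and existence is obtained by checking that these four formulas define a connection with the stated properties. You instead fix a reference connection $\nabla^0$ satisfying (i)--(ii), note that the admissible connections form an affine space of block-diagonal, blockwise skew difference tensors, and solve four independent linear problems: the off-diagonal blocks $\beta,\gamma$ by splitting $\Tor^0(U,\cdot)_Q$ and $\Tor^0(\cdot,X)_{Q^\perp}$ into $g$- (resp.\ $g^\perp$-) symmetric and skew parts, the diagonal blocks $\alpha,\delta$ by the isomorphism $Q^*\otimes\mathfrak{so}(Q,g)\cong\Lambda^2Q^*\otimes Q$ underlying Levi--Civita uniqueness. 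The two routes encode the same information --- your four blocks correspond exactly to the paper's four formulas --- but yours makes existence automatic from fibrewise linear algebra, at the cost of not producing the explicit Koszul formulas that the paper needs later (e.g.\ in deriving \eqref{difference_omega} in the proof of Theorem \ref{cantriple0}). Your observation that $\Tor(X,Y)_{Q^\perp}$ and $\Tor(U,V)_Q$ are forced by (i) alone, and hence carry no condition, is correct and is left implicit in the paper.
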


\begin{proof}
%
Throughout the proof, let $X,Y,Z\in \Gamma(Q)$ and $U,V,W\in \Gamma(Q^\perp)$. 
Suppose that $\nabla$ is a connection on $TM$ satisfying the conditions 
stated in the proposition. 
Then the conditions (i), (ii), (iii-a), (iii-b) force $\nabla$ to satisfy 
\begin{eqnarray}\label{eq209}
2 g(\nabla_X Y,Z) &=& X g(Y,Z) + Y g(X,Z) - Z g(X,Y) + g([X,Y]_Q,Z)
\nonumber\\
&& - g([X,Z]_Q,Y) - g([Y,Z]_Q,X)
\end{eqnarray}
and
\begin{eqnarray}\label{eq210}
2 g^\perp(\nabla_U V,W) &=& U g^\perp(V,W) + V g^\perp(U,W) - W g^\perp(U,V) 
+ g^\perp([U,V]_{Q^\perp},W)
\nonumber\\
&& - g^\perp([U,W]_{Q^\perp},V) - g^\perp([V,W]_{Q^\perp},U).
\end{eqnarray} 
Since ${\rm Tor}(U,X)_Q = \nabla_U X - [U,X]_Q$, the condition $\nabla g = 0$ implies that 
$$
g(\mbox{\rm Tor}(U,X)_Q,Y) + g(X,\mbox{\rm Tor}(U,Y)_Q) 
= Ug(X,Y) - g([U,X]_Q,Y) - g(X,[U,Y]_Q).
$$
Therefore, the condition (iii-c) determines ${\rm Tor}(U,X)_Q$ by 
$$
g(\mbox{\rm Tor}(U,X)_Q,Y) = \frac{1}{2}(Ug(X,Y) - g([U,X]_Q,Y) - g(X,[U,Y]_Q)), 
$$
and this gives 
\begin{eqnarray}\label{eq211}
g(\nabla_U X,Y) = 
\frac{1}{2}(Ug(X,Y) + g([U,X]_Q,Y) - g(X,[U,Y]_Q)). 
\end{eqnarray}
Similarly, the condition (iii-d) determines ${\rm Tor}(U,X)_{Q^\perp}$ by  
$$
g^\perp(\mbox{\rm Tor}(U,X)_{Q^\perp},V) 
= -\frac{1}{2}(Xg^\perp(U,V) + g^\perp([U,X]_{Q^\perp},V) + g^\perp(U,[V,X]_{Q^\perp})), 
$$
which gives 
\begin{equation}\label{eq212} 
g^\perp(\nabla_X U,V) = 
\frac{1}{2}(Xg^\perp(U,V) - g^\perp ([U,X]_{Q^\perp},V) + g^\perp (U,[V,X]_{Q^\perp})). 
\end{equation}
Conversely, (\ref{eq209}),  (\ref{eq210}), (\ref{eq211}), (\ref{eq212}) determine 
a connection $\nabla$ on $TM$ uniquely, and $\nabla$ satisfies the conditions stated in the proposition. 
\end{proof}

\medskip\noindent
\Remark One can generalize Proposition \ref{noncanconn1} to a quaternionic pseudohermitian structure 
in an obvious manner.

\medskip
Our next task is to determine $Q^\perp$. 
As in the CR case, we shall work with complex frames.
Let
$
\onezero = \{ X\in Q\otimes {\Bbb C} \mid IX = \rmo X \}.
$
Then we have the decomposition
$
Q\otimes {\Bbb C} = \onezero \oplus \overline{\onezero},
$
orthogonal with respect to the Levi form $g$ regarded as
a hermitian form on $Q\otimes {\Bbb C}$.
Take a local orthonormal frame $\{ \varepsilon_1,\dots,\varepsilon_{4n} \}$ 
for $Q$ satisfying
\begin{equation}\label{rframe}
\varepsilon_{4k-2} = I \varepsilon_{4k-3},\,\,
\varepsilon_{4k-1} = J \varepsilon_{4k-3},\,\,
\varepsilon_{4k} = K \varepsilon_{4k-3}
(= I \varepsilon_{4k-1}),\,\, k=1,\dots, n.
\end{equation}
(Such a local orthonormal frame for $Q$ is said to be {\em adapted}.) 
Then
\begin{equation}\label{cframe}
\left\{ e_{2k-1}= \left(\varepsilon_{4k-3}-\rmo\varepsilon_{4k-2}\right)/\sqrt{2},\,\,
e_{2k}= \left(\varepsilon_{4k-1}-
\rmo\varepsilon_{4k}\right)/\sqrt{2} \right\}_{1\leq k\leq n}
\end{equation} 
is a local unitary frame for $\onezero$, and $J,K : \onezero
\rightarrow \overline{\onezero}$ are given by
$$
Je_{2k-1} = \overline{e_{2k}},\,\, 
Je_{2k} = - \overline{e_{2k-1}};\,\,
Ke_{2k-1} = -\rmo \overline{e_{2k}},\,\,
Ke_{2k} = \rmo \overline{e_{2k-1}}.
$$

Choose an admissible three-plane field $Q^\perp$, and let $\nabla$ be the corresponding connection 
as in Proposition \ref{noncanconn1}.
Regarding $\nabla$ as a connection on $Q$, let $\omega$ be the matrix of connection forms 
with respect to the above local frame; its components are given by 
$$
\nabla e_i = \sum_{j=1}^{2n} \left(\omega_{i\bar{j}} e_j + \omega_{ij}
\overline{e_j}\right). 
$$
Since $\nabla g = 0$, we have 
$\omega_{i\bar{j}} = -\overline{\omega_{j\bar{i}}}$
and $\omega_{ij} = - \omega_{ji}$, 
namely, $(\omega_{i\bar{j}})$ is skew-hermitian and
$(\omega_{ij})$ is skew-symmetric.
Note that $\nabla\overline{e_i} = \overline{\nabla e_i}$
since $\nabla$ is a real connection.

%

We now further restrict the domain of $\nabla$ by considering 
the $Q$-partial connection 
$$
\nabla^Q : (X,Y) \in \Gamma(Q) \times \Gamma(Q) \mapsto \nabla_X Y 
\in \Gamma(Q). 
$$
In other words, we regard $\omega$ as being defined on $Q$. 
Let $(sp(n) + sp(1))^\perp$ denote the orthogonal complement of $sp(n) + sp(1)$ 
in $so(4n)$ with respect to the Killing inner product. 
Then $\omega$'s $(sp(n) + sp(1))^\perp$-component $\omega^{\rm obs}$ gives 
an obstruction for the $Q$-partial connection $\nabla^Q$ 
to preserve the $Sp(n)\cdot Sp(1)$-structure of $Q$, and $\omega^{\rm obs}$ 
is small if and only if $\nabla^Q $ is close to being an $Sp(n)\cdot Sp(1)$-partial connection. 
$\omega^{\rm obs}$ is tensorial, and $(\omega^{\rm obs})_q$ is an element of
${Q_q}^* \otimes (sp(n) + sp(1))^\perp$ for each point $q$.

Note that when $n=1$, since $Sp(1)\cdot Sp(1) = SO(4)$, the $SO(4)$-connection $\nabla$ 
necessarily preserves the $Sp(1)\cdot Sp(1)$-structure of $Q$, and therefore the obstruction 
tensor $\omega^{\rm obs}$ vanishes irrespective of the choice of $Q^\perp$. 
Hence we assume $n\geq 2$ hereafter, and use reprensentation theory to make the requirement that 
$\omega^{\rm obs}$ be small more explicit. 
${Q_q}^* \otimes (sp(n) + sp(1))^\perp$ is an $Sp(n)\times Sp(1)$-module 
whose model is $\H^n \otimes (sp(n) + sp(1))^\perp$.
Swann \cite{swa} wrote down the irreducible decomposition of this module
explicitly, which we shall review.
Let $E$ (resp. $H$) be the standard complex $Sp(n)$(resp. $Sp(1)$)-module, 
with the left action of $Sp(n)$ (resp. $Sp(1)$) through the inclusion
$Sp(n) \subset SU(2n)$ (resp. $Sp(1)=SU(2)$).
Then we have
\begin{eqnarray}\label{hn}
&\H^n \otimes \C \cong E \otimes H&\\
\label{spnsp1}
&(sp(n) + sp(1))^\perp \otimes \C \cong \Lambda^2_0E \otimes S^2H&
\end{eqnarray}
as complex $Sp(n)\times Sp(1)$-modules.
In fact, 
\begin{eqnarray}\label{congseq}
so(4n) \otimes \C &\cong& \Lambda^2 \H^n \otimes \C\quad
\mbox{(as $SO(4n)$-modules)}\nonumber\\
&\cong& \Lambda^2 (E \otimes H)\\
&\cong& S^2E \oplus S^2H
\oplus (\Lambda^2_0E \otimes S^2H). \nonumber
\end{eqnarray}
Since $S^2E \cong sp(n)$ and $S^2H \cong sp(1)$, we conclude
(\ref{spnsp1}).
It follows from (\ref{hn}) and (\ref{spnsp1}) that
\begin{eqnarray}\label{congseq2}
(\H^n \otimes (sp(n) + sp(1))^\perp) \otimes \C
&\cong& (E\otimes H) \otimes (\Lambda^2_0E \otimes S^2H)\nonumber\\
&\cong& (E\otimes \Lambda^2_0E) \otimes (H\otimes S^2H)\\
&\cong& (K\oplus \Lambda^3_0E\oplus E) \otimes (S^3H\oplus H),\nonumber
\end{eqnarray}
where $K$ is the irreducible complex $Sp(n)$-module with highest weight
$(2,1,0,\dots,0)$.
Therefore, we have the irreducible decomposition
\begin{eqnarray}\label{irred_decomp}
\lefteqn{(\H^n \otimes (sp(n) + sp(1))^\perp) \otimes \C} \nonumber \\
&\cong& (K\otimes S^3H) \oplus (\Lambda^3_0E\otimes S^3H) \oplus 
(E\otimes S^3H)\\
&& \oplus (K\otimes H) \oplus (\Lambda^3_0E\otimes H) \oplus (E\otimes H). \nonumber 
\end{eqnarray}
It can be shown that all the components of $\omega^{\rm obs}$ other than the one corresponding 
to $E\otimes H$ are stable under a change of $Q^\perp$.  
It is also not possible in general to remove the component $\omega^{E\otimes H}$ 
of $\omega^{\rm obs}$ in $E\otimes H$. 
As we shall prove in Theorem \ref{cantriple0} below, $\omega^{E\otimes H}$ can be removed 
by a suitable choice of $Q^\perp$ if one assumes a stronger condition than strong pseudoconvexity 
which is called ultra-pseudoconvexity and will be defined below. 

To proceed, we shall first express the condition 
\begin{equation}\label{require0}
\omega^{E\otimes H} = 0 
\end{equation} 
in a more explicit form.

\begin{Lemma}\label{require_rewrite}
The condition \eqref{require0} is rewritten as follows:\,\, for $l=1,\dots,n$, 
\begin{eqnarray}\label{require2.1}
&& \sum_{k=1}^n \biggl\{ \frac{1}{2} (\overline{\omega_{2k-1,\overline{2l-1}}} - \omega_{2k,\overline{2l}}) (e_{2k-1})
+ \omega_{2k-1,2l-1} (\overline{e_{2k-1}}) \nonumber \\ 
&&\qquad +\, \frac{1}{2} (\omega_{2k-1,\overline{2l}} + \overline{\omega_{2k,\overline{2l-1}}})(e_{2k})
+ \omega_{2k, 2l-1} (\overline{e_{2k}}) \\ 
&&\qquad +\, \frac{1}{2n}(\omega_{2k-1,\overline{2k-1}} + \omega_{2k,\overline{2k}}) (e_{2l-1}) 
+ \frac{1}{n} \omega_{2k-1,2k} (\overline{e_{2l}}) \biggr\} = 0, \nonumber
\end{eqnarray}
and
\begin{eqnarray}\label{require2.2}
&& \sum_{k=1}^n \biggl\{ \frac{1}{2} (\overline{\omega_{2k-1,\overline{2l}}} + \omega_{2k,\overline{2l-1}}) (e_{2k-1})
+ \omega_{2k-1,2l} (\overline{e_{2k-1}}) \nonumber \\ 
&&\qquad +\, \frac{1}{2} (-\, \omega_{2k-1,\overline{2l-1}} + \overline{\omega_{2k,\overline{2l}}})(e_{2k})
+ \omega_{2k, 2l} (\overline{e_{2k}}) \\ 
&&\qquad +\, \frac{1}{2n}(\omega_{2k-1,\overline{2k-1}} + \omega_{2k,\overline{2k}}) (e_{2l}) 
- \frac{1}{n} \omega_{2k-1,2k} (\overline{e_{2l-1}}) \biggr\} = 0. \nonumber
\end{eqnarray}
\end{Lemma}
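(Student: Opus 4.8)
The plan is to unwind \eqref{require0} in two stages: first express the obstruction tensor $\omega^{\rm obs}$ --- the $(sp(n)+sp(1))^\perp$-valued part of the $so(4n)$-valued connection form $\omega$ --- in terms of the scalar components $\omega_{i\bar j},\omega_{ij}$, and then project $\omega^{\rm obs}$ onto its $E\otimes H$-isotypic component. For the first stage, recall that with respect to the Killing inner product one has the $Sp(n)\cdot Sp(1)$-invariant orthogonal decomposition $so(4n)=sp(n)\oplus sp(1)\oplus(sp(n)+sp(1))^\perp$, in which $sp(n)=\{A : AI_a=I_aA,\ a=1,2,3\}$ is the commutant and $sp(1)=\R I\oplus\R J\oplus\R K$; a direct check on the three summands (using $I_a^{2}=-\id$ and $JIJ=KIK=I$ together with the cyclic variants, all as operators on $Q$) shows that $A\mapsto\frac14(A-IAI-JAJ-KAK)$ is the orthogonal projection onto $sp(n)$, whence, acting fibrewise, $\omega^{\rm obs}=\frac14(3\omega+I\omega I+J\omega J+K\omega K)-\sum_{a=1}^{3}\frac{\langle\omega,I_a\rangle}{\langle I_a,I_a\rangle}I_a$. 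Substituting $\omega$, written in the unitary frame \eqref{cframe}, into this formula and simplifying via the identities $Ie_i=\rmo e_i$, $Je_{2k-1}=\overline{e_{2k}}$, $Je_{2k}=-\overline{e_{2k-1}}$, $Ke_{2k-1}=-\rmo\overline{e_{2k}}$, $Ke_{2k}=\rmo\overline{e_{2k-1}}$, expresses $\omega^{\rm obs}$ through exactly the combinations $\frac12(\overline{\omega_{2k-1,\overline{2l-1}}}-\omega_{2k,\overline{2l}})$, $\frac12(\omega_{2k-1,\overline{2l}}+\overline{\omega_{2k,\overline{2l-1}}})$, $\omega_{2k-1,2l-1}$, $\omega_{2k,2l}$ (the ``quaternionic off-diagonal'' pieces of $(\omega_{i\bar j})$ and $(\omega_{ij})$) together with the traces $\omega_{2k-1,\overline{2k-1}}+\omega_{2k,\overline{2k}}$ and $\omega_{2k-1,2k}$ that appear in \eqref{require2.1}--\eqref{require2.2}.

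For the second stage, observe from \eqref{irred_decomp} that $E\otimes H$ occurs with multiplicity one in $Q^{*}\otimes(sp(n)+sp(1))^\perp$ --- it is the only one of the six summands isomorphic to $Q^{*}\otimes\C$ --- so by Schur's lemma the projection $\omega^{\rm obs}\mapsto\omega^{E\otimes H}$ is, up to a nonzero constant, the unique $Sp(n)\cdot Sp(1)$-equivariant contraction $Q^{*}\otimes(sp(n)+sp(1))^\perp\to Q^{*}\cong\H^{n}\otimes\C$. Concretely it is obtained by contracting the $Q$-index of the endomorphism factor against the form index (a partial trace over the repeated frame index $k$, or an $I_a$-twisted variant thereof), corrected by a multiple of the full trace so as to be symplectic-trace-free on the $E$-factor; this trace-correction, combined with the trace-subtractions already present in $\omega^{\rm obs}$, is what produces the coefficients $\frac1{2n}$ and $\frac1n$ in \eqref{require2.1}--\eqref{require2.2}. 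Applying this contraction, the condition $\omega^{E\otimes H}=0$ becomes the vanishing of a section of $Q^{*}\otimes\C$; reading off its $e_{2l-1}$- and $e_{2l}$-components for each $l$ yields \eqref{require2.1} and \eqref{require2.2} respectively, and one verifies, using $Ke_{2l-1}=-\rmo\overline{e_{2l}}$ and $Ke_{2l}=\rmo\overline{e_{2l-1}}$, that these two equations are interchanged by the quaternionic structure, consistent with the $H$-factor being two-dimensional.

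The computational heart --- and the main obstacle --- is the first stage. Although representation theory (Swann's decomposition \cite{swa} and Schur's lemma) forces the shape of the answer up to finitely many constants, the frame \eqref{cframe} is adapted to $I$ but mixes $J$ and $K$ with complex conjugation, so computing $J\omega J$ and $K\omega K$ in this frame, and correctly isolating the genuinely trace-free $\Lambda^{2}_{0}E\otimes S^{2}H$-part from the $sp(n)$- and $sp(1)$-parts, is a delicate bookkeeping of the index shuffles $2k-1\leftrightarrow 2k$ and of signs, where arithmetic slips are easiest to make. Rather than carry constants through the computation I would pin them only at the end, by evaluating the projection on a few model tensors --- a generic element of $sp(n)$, the three elements $I,J,K$, and a generic element of $(sp(n)+sp(1))^\perp$ --- which determines the normalization and the trace-corrections uniquely; the remainder is the routine substitution of the frame identities and collection of like terms.
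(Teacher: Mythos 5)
Your overall strategy is viable and in fact rests on the same two pillars as the paper's proof in \S 4: the multiplicity-one occurrence of $E\otimes H$ in \eqref{irred_decomp}, and an explicit, equivariant way of extracting that component. Your first stage is also correct as far as it goes: $A\mapsto\frac14(A-IAI-JAJ-KAK)$ is indeed the orthogonal projection of $so(4n)$ onto the commutant $sp(n)$ (it is the average over conjugation by the quaternion group), and since $sp(1)=\R I\oplus\R J\oplus\R K$ is orthogonal to $sp(n)$ and annihilated by this projector, your formula for $\omega^{\rm obs}$ is right. The paper, however, never computes $\omega^{\rm obs}$ explicitly: in the proof of Lemma \ref{omega_eh} it writes down, via the explicit dictionary between $\Lambda^2_0E\otimes S^2H$ and $(sp(n)+sp(1))^\perp\otimes\C$ and the embeddings $s_E$, $s_H$, the images in $(\H^n\otimes\Lambda^2\H^n)\otimes\C$ of the basis vectors ${\bf e}_i\otimes{\bf f}_p$ of $E\otimes H$, and then pairs the full $\omega$ against them; since the six summands of \eqref{irred_decomp} are pairwise non-isomorphic, this pairing automatically annihilates everything but the $E\otimes H$-part, so your stage 1 is dispensable.

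The genuine gap is that the formulas \eqref{require2.1}, \eqref{require2.2} --- which are the entire content of the lemma --- are never derived. Schur's lemma does tell you that any nonzero equivariant contraction onto a copy of $E\otimes H$ yields a system equivalent to \eqref{require0}, but it does not tell you that the system is the one displayed; the precise index pairings and the coefficients $\frac12$, $\frac1{2n}$, $\frac1n$ must be computed, and you explicitly defer exactly this step (``routine substitution'', ``pin the constants at the end''). Moreover, your description of the contraction is off: the target $E\otimes H$ is the standard module and carries no trace condition, so there is no correction ``making the output symplectic-trace-free on the $E$-factor''; the $\frac1n$ and $\frac1{2n}$ terms actually originate in the trace-free projections $(\,\cdot\,)_0$ occurring inside $\Lambda^2_0E$ and inside $s_E(w)=\sum_k[{\bf e}_{2k-1}\otimes({\bf e}_{2k}\wedge w)_0-{\bf e}_{2k}\otimes({\bf e}_{2k-1}\wedge w)_0]$, i.e.\ in the source, not the target. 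You would also need to verify that your chosen contraction is nonzero on the $E\otimes H$-isotypic component, since otherwise the derived conditions are vacuous. Until the frame computation is actually carried out --- which is precisely the computation the paper performs --- the lemma is not proved.
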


We shall postpone the proof of this lemma to the next section. 

\medskip\noindent
\Definition
We say that a hyper CR structure is {\em ultra-pseudoconvex} if the symmetric bilinear form $h$ on the subbundle $Q$ 
defined by
$$
h(X,Y) = (2n+4) \Levi_\theta (X,Y) - \sum_{a=1}^3 d\theta_a(X,I_a Y),\quad X,Y\in Q,
$$
is positive or negative definite for some (hence any) compatible, $\R^3$-valued one-form $\theta$.

\medskip 
Since the component of $h$ invariant under $I,J,K$ is $(2n+1) g$, a hyper CR structure is \spc 
if it is ultra-pseudoconvex. 
The sphere $S^{4n+3}$ and the quaternionic Heisenberg group $\mathcal{H}^{4n+3}$ are ultra-pseudoconvex, 
since on these hyper CR manifolds, the three complex Levi forms $d\theta_a(\cdot,I_a \cdot)$ coincide 
on $Q$ and therefore they are equal to $\Levi_\theta$. 
It is easy to see that strictly convex real hypersurfaces in ${\Bbb H}^{n+1}$ are ultra-pseudoconvex.  
In particular, the ellipsoids as in \S 2 are ultra-pseudoconvex.
We give a less obvious example. 

\medskip\noindent
\Example\label{T^3_bundle_usc} Let $(M,\theta)$ be the hyper pseudohermitian manifold as in Example \ref{T^3_bundle}. 
Recall that $M$ is the total space of the (trivial) $T^3$-bundle $\pi\colon M\rightarrow N$ over 
the hyperhermitian Hopf surface $(N,g,I,J,K)$. 
We have 
$$
\mathrm{Levi}_\theta = \pi^* g = \frac{2(|dz|^2 + |dw|^2)}{|z|^2+|w|^2} 
$$
and 
\begin{eqnarray}\label{T^3_bundle_h}
h &=& 6\, \mathrm{Levi}_\theta - \sum^3_{a=1}d\theta_a(\cdot,I_a\cdot) 
\nonumber \\
&=& \dfrac{4(|dz|^2 + |dw|^2)}{|z|^2+|w|^2} + \dfrac{2(\bar{z}dz + zd\bar{z} + \bar{w}dw 
+ wd\bar{w})^2}{(|z|^2+|w|^2)^2} \\ 
&=& 2\, \mathrm{Levi}_\theta + 2 (d\mu)^2. \nonumber 
\end{eqnarray}
Therefore, the hyper CR structure of $M$ is ultra-pseudoconvex. 

\medskip\noindent
\Example\label{deformation_qheisen_usc} 
For the hyper pseudohermitian structure as in Example \ref{deformation_qheisen}, we have 
\begin{eqnarray*}
h &=& \frac{1}{2}\sum_\alpha \biggl\{ \Bigl( (n+2)\Lambda_\alpha - B^1_\alpha - C^2_\alpha 
- D^3_\alpha - \sum_{a=1}^3 A^a_\alpha \Bigr) \left(dx^0_\alpha\right)^2 \\ 
&& \phantom{\frac{1}{2}\sum_\alpha \left\{ \right.} 
+ \Bigl( (n+2)\Lambda_\alpha - A^1_\alpha - D^2_\alpha - C^3_\alpha - \sum_{a=1}^3 B^a_\alpha \Bigr) 
\left(dx^1_\alpha\right)^2\\
&& \phantom{\frac{1}{2}\sum_\alpha \left\{ \right.} 
+ \Bigl( (n+2)\Lambda_\alpha - D^1_\alpha - A^2_\alpha - B^3_\alpha - \sum_{a=1}^3 C^a_\alpha \Bigr) 
\left(dx^2_\alpha\right)^2\\
&& \phantom{\frac{1}{2}\sum_\alpha \left\{ \right.} 
+ \Bigl( (n+2)\Lambda_\alpha - C^1_\alpha - B^2_\alpha - A^3_\alpha - \sum_{a=1}^3 D^a_\alpha \Bigr) 
\left(dx^3_\alpha\right)^2 \biggr\}. 
\end{eqnarray*} 
Note that $h$ can be degenerate or even indefinite according to various choices of 
$A^a_\alpha,\cdots, D^a_\alpha$ (e.g.~if $A^a_\alpha = n+1$ and $B^a_\alpha = C^a_\alpha = D^a_\alpha = -n/3$, 
then $h$ is indefinite). 
Thus the hyper CR structure may not be ultra-pseudoconvex, even though it is strongly pseudoconvex. 

\medskip 
Note that $h$ remains unchanged under the deformation of hyper CR structure and $\theta$ as in (\ref{qcr}), 
\eqref{theta_deform}. 
Thus the definition of ultra-pseudoconvexity extends to the \qua CR structure.

\begin{Theorem}\label{cantriple0}
Let $(M,\theta)$ be an ultra-pseudoconvex hyper pseudohermitian manifold of dimension $> 7$. 
Then there exists a unique admissible three-plane field $Q^\perp$ such that 
the corresponding connection $\nabla$ as in Proposition \ref{noncanconn1} 
satisfies \eqref{require0}. 
\end{Theorem}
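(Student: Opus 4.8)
The plan is to exploit the fact that, once $\theta$ is fixed, the admissible three-plane fields form an affine space modelled on $\Gamma(Q)$, to show that the component $\omega^{E\otimes H}$ depends \emph{affinely} on this parameter, and then to identify the linear part of that dependence with the bundle map attached to the form $h$; ultra-pseudoconvexity makes that linear part invertible, which yields existence and uniqueness of a solution of \eqref{require0} in one stroke. To set this up, I would fix a reference admissible triple $(T^0_1,T^0_2,T^0_3)$ normalized by $\theta_a(T^0_a)=1$, with three-plane field $Q^\perp_0$. Any other admissible triple has the form $T_a=T^0_a+V_a$ with $V_a\in\Gamma(Q)$, and the relations $I_aT_b=T_c$ force $V_3=IV_2$, $V_1=JV_3$, $V_2=KV_1$, so the whole triple is determined by a single section $V\in\Gamma(Q)$ (taking $V=V_1$ one has $V_2=KV$, $V_3=-JV$ on $Q$). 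Since each $\theta_a$ annihilates $Q$ one still has $\theta_a(T_a)=1$, whence the $Q$-projection along the new three-plane field differs from the old one by the tensorial, $V$-linear expression
\[
[X,Y]^{\mathrm{new}}_Q-[X,Y]^{\mathrm{old}}_Q=\sum_{a=1}^3 d\theta_a(X,Y)\,V_a,\qquad X,Y\in\Gamma(Q).
\]

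Next I would track how $\nabla$, hence $\omega^{E\otimes H}$, changes. Only the $Q$-partial connection enters \eqref{require2.1}, \eqref{require2.2}, and for $X,Y\in\Gamma(Q)$ it is governed by \eqref{eq209}, which is linear in the bracket term $[\,\cdot\,,\,\cdot\,]_Q$. Substituting the above,
\[
2\,g\!\left((\nabla^{\mathrm{new}}_X-\nabla^{\mathrm{old}}_X)Y,Z\right)=\sum_{a=1}^3\Big(d\theta_a(X,Y)\,g(V_a,Z)-d\theta_a(X,Z)\,g(V_a,Y)-d\theta_a(Y,Z)\,g(V_a,X)\Big),
\]
which is tensorial in $X,Y,Z$ and linear in $V$. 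Expressing this in the unitary frame \eqref{cframe} — using $d\theta_a(X,Y)=-\Levi_{\theta_a}(X,I_aY)$ (legitimate by \eqref{I_a_invariance_of_dtheta_a}, which also makes $\Levi_{\theta_a}$ symmetric), the $g$-invariance of $I,J,K$, and the frame formulas for $J,K$ on $Q^{1,0}$ — and feeding the result into the left-hand sides of \eqref{require2.1}, \eqref{require2.2}, one obtains (for $n\ge2$, where the $E\otimes H$-summand is well defined)
\[
\omega^{E\otimes H}(Q^\perp_V)=\omega^{E\otimes H}(Q^\perp_0)+\mathcal L(V),
\]
where, after identifying the real bundle underlying the $E\otimes H$-summand with $Q$, $\mathcal L$ is a bundle endomorphism of $Q$ built algebraically out of $\Levi_\theta$ and the complex Levi forms $\Levi_{\theta_a}$.

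I would then carry out the contractions explicitly and check that $\mathcal L$ is a nonzero constant multiple of the $g$-self-adjoint endomorphism $g^{-1}h$, where
\[
h(X,Y)=(2n+4)\,\Levi_\theta(X,Y)-\sum_{a=1}^3 d\theta_a(X,I_aY)
\]
is exactly the form appearing in the definition of ultra-pseudoconvexity: the ``diagonal'' contributions produce the ``$+4$'', and the trace contributions carrying the coefficients $\tfrac1{2n}$, $\tfrac1n$ in \eqref{require2.1}, \eqref{require2.2} produce the ``$2n$''. Granting this, condition \eqref{require0} becomes $\mathcal L(V)=-\omega^{E\otimes H}(Q^\perp_0)$; since $h$ is positive or negative definite, $g^{-1}h$, hence $\mathcal L$, is a pointwise isomorphism, so this equation has a unique solution $V\in\Gamma(Q)$, which corresponds to the unique admissible three-plane field $Q^\perp$ with the required property.

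The main obstacle is the identification in the third step: verifying that the representation-theoretic projection onto the $E\otimes H$-component turns the $V$-linear term of the second step into precisely $g^{-1}h$, including the exact coefficient $2n+4$. This is a direct but delicate computation with the adapted frame \eqref{rframe}, \eqref{cframe}, and it is the sole point at which ultra-pseudoconvexity — rather than mere strong pseudoconvexity — is needed: strong pseudoconvexity controls only the quaternion-hermitian part $(2n+1)g$ of $h$, which is not enough to guarantee that $\mathcal L$ is invertible.
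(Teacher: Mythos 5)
Your proposal is correct and follows essentially the same route as the paper: both parametrize the admissible three-plane fields by a single section $V\in\Gamma(Q)$ (your $V_a$ are the paper's $2I_aV$), derive the same change-of-bracket and change-of-connection formulas, and reduce \eqref{require0} to the pointwise linear system $h(V,\cdot)=\text{(given)}$, solved uniquely by ultra-pseudoconvexity. The only content you defer --- the frame computation identifying the linear part with a nonzero multiple of $h$ (the paper gets the factor $-\tfrac{3n-3}{2n}$, nonzero precisely for $n\geq 2$) --- is exactly the computation occupying the body of the paper's proof.
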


We call $Q^\perp$ of the theorem the {\em canonical three-plane field}, and the corresponding 
admissible triple $(T_1, T_2,T_3)$ the {\em canonical triple}. 
The corresponding connection, denoted by $D$, is a quaternionic analogue of the Tanaka-Webster connection 
in complex CR geometry. 
We call it the {\em canonical connection} associated with $(M,\theta)$. 

\medskip\noindent
{\em Proof of Theorem \ref{cantriple0}.}
Fix an arbitrary admissible triple $(T_1,T_2,T_3)$ of reference, and let $\nabla$ be the corresponding 
connection given by Proposition \ref{noncanconn1}. 
Let $\widehat{T}_a = T_a + 2I_a V$ for $V\in \Gamma(Q)$. 
We will show that $V$ can be chosen uniquely so that $\widehat{\nabla}$, the connection 
corresponding to $(\widehat{T}_1, \widehat{T}_2, \widehat{T}_3)$, satisfies \eqref{require0}. 

First, we have for $X,Y\in Q$,  
\begin{equation}\label{bracket_change}
[X,Y]_Q^{\,\,\widehat{\mbox{}}} = [X,Y]_Q + 2 \sum_{a=1}^3 d\theta_a(X,Y)I_aV,
\end{equation}
where $[X,Y]_Q^{\,\,\widehat{\mbox{}}}$ is the $Q$-component of $[X,Y]$ with
respect to $(\widehat{T}_1, \widehat{T}_2,\widehat{T}_3)$. 
Using (\ref{bracket_change}) and (\ref{eq209}) with $(Y, Z) = (e_I, e_J)$, we obtain
\begin{eqnarray}\label{difference_omega}
\widehat{\omega}_{IJ}(X) &=& \omega_{IJ}(X) + \sum^3_{a = 1}d\theta_a(X,e_I)g(I_aV,e_J) \nonumber \\
&&- \sum^3_{a=1}d\theta_a(X,e_J)g(I_aV,e_I) - \sum^3_{a=1}d\theta_a(e_I,e_J)g(I_aV,X).
\end{eqnarray}
Here, $\omega_{IJ}$ (resp. $\widehat{\omega}_{IJ}$) are connection forms of $\nabla$ (resp. $\widehat{\nabla}$), 
and the indices $I, J$ range over $1,2,\dots,2n,\overline{1},\overline{2},\dots,\overline{2n}$. 

Let $\omega_{2l-1}$ (resp. $\widehat{\omega}_{2l-1}$) denote the left-hand side of \eqref{require2.1}, 
computed for $\nabla$ (resp. $\widehat{\nabla}$). 
Then by using \eqref{difference_omega}, we obtain 
\begin{eqnarray}
\lefteqn{\widehat{\omega}_{2l-1} - \omega_{2l-1}} \nonumber \\ 
&=& \sum^3_{a=1} \biggl\{ - \left( \frac{3}{2}+\frac{1}{2n} \right) d\theta_a(I_aV,e_{2l-1}) \nonumber \\
&&\phantom{\sum^3_{a=1} \biggl\{} - \left( \frac{1}{2} + \frac{1}{2n} \right) \sum^n_{k=1}
\bigl(d\theta_a(e_{2k-1},\overline{e_{2k-1}}) + d\theta_a(e_{2k},\overline{e_{2k}}) \bigr) g(I_aV,e_{2l-1}) \\
&&\phantom{\sum^3_{a=1} \biggl\{} - \left( 1+\frac{1}{n} \right) \sum^n_{k=1} d\theta_a(e_{2k-1},e_{2k})g(I_aV,\overline{e_{2l}}) 
\nonumber \\ 
&&\phantom{\sum^3_{a=1} \biggl\{} + \frac{1}{n} \sum^n_{k=1} \bigl[ \bigl(
d\theta_a(\overline{e_{2k}},e_{2l-1}) - d\theta_a(e_{2k-1},\overline{e_{2l}}) \bigr) g(I_aV,e_{2k}) 
\nonumber \\
&&\phantom{\sum^3_{a=1} \biggl\{ + \frac{1}{n} \sum^n_{k=1} \bigl[} 
+ \bigl( d\theta_a(\overline{e_{2k-1}},e_{2l-1}) + d\theta_a(e_{2k},\overline{e_{2l}})\bigr) 
g(I_aV,e_{2k-1}) \bigr] \biggr\}. \nonumber 
\end{eqnarray}

The right-hand side is simplified as follows. 
Since $d\theta_a$ is $I_a$-invariant, the sum on the second line vanishes if $a=2,3$. 
We compute the sum for $a=1$:  
\begin{eqnarray} 
\lefteqn{\sum^n_{k=1} \bigl( d\theta_1(e_{2k-1},\overline{e_{2k-1}}) 
+ d\theta_1(e_{2k},\overline{e_{2k}}) \bigr) g(I_1V,e_{2l-1})} \nonumber\\
&=&
- \sum^n_{k=1}
\bigl( d\theta_1(e_{2k-1},\overline{e_{2k-1}}) 
+ d\theta_1(e_{2k},\overline{e_{2k}}) \bigr)g(V,{I_1}e_{2l-1}) \nonumber\\
&=&
\sum^n_{k=1}
\bigl( d\theta_1(e_{2k-1},I_1\overline{e_{2k-1}}) 
+ d\theta_1(\overline{e_{2k}},I_1e_{2k}) \bigr) g(V,e_{2l-1}) \\
&=&
\sum^n_{k=1}2g(e_{2k-1},\overline{e_{2k-1}})g(V,e_{2l-1}) \nonumber\\
&=&
2n\, g(V,e_{2l-1}). \nonumber
\end{eqnarray}

The sum on the third line vanishes if $a=1$; for $a=2$, we compute: 
\begin{eqnarray} 
\sum^n_{k=1} d\theta_2(e_{2k-1},e_{2k})g(I_2V,\overline{e_{2l}}) 
&=& \sum^n_{k=1}d\theta_2(e_{2k-1},I_2 \overline{e_{2k-1}}) g(V,e_{2l-1}) \nonumber \\ 
&=& \sum^n_{k=1} g(e_{2k-1}, \overline{e_{2k-1}}) g(V,e_{2l-1}) \\ 
&=& n\, g(V,e_{2l-1}). \nonumber 
\end{eqnarray} 
Likewise, for $a=3$, we obtain 
\begin{equation}
\sum^n_{k=1} d\theta_3(e_{2k-1},e_{2k})g(I_3V,\overline{e_{2l}}) = n\, g(V,e_{2l-1}).
\end{equation} 

We compute the sum on the fourth and fifth lines: 
\begin{eqnarray*}
&&\sum^n_{k=1} \bigl[ \bigl(
d\theta_a(\overline{e_{2k}},e_{2l-1}) - d\theta_a(e_{2k-1},\overline{e_{2l}}) \bigr) g(I_aV,e_{2k}) \\
&&\phantom{\sum^n_{k=1} \bigl[} 
+ \bigl( d\theta_a(\overline{e_{2k-1}},e_{2l-1}) + d\theta_a(e_{2k},\overline{e_{2l}})\bigr) 
g(I_aV,e_{2k-1}) \bigr] \\ 
&=& \sum^{2n}_{i=1} \bigl[ \bigl(
d\theta_a(\overline{e_{i}},e_{2l-1}) + d\theta_a(I_2 \overline{e_{i}},\overline{e_{2l}}) \bigr) g(I_aV,e_{i}) \\
&&\phantom{\sum^n_{k=1} \bigl[}
 + \bigl(
d\theta_a(e_{i},e_{2l-1}) + d\theta_a(I_2 e_{i},\overline{e_{2l}}) \bigr) g(I_aV,\overline{e_{i}})  \bigr] \\
&& - \sum^{2n}_{i=1} \bigl(
d\theta_a(e_{i},e_{2l-1}) + d\theta_a(I_2 e_{i},\overline{e_{2l}}) \bigr) g(I_aV,\overline{e_{i}}) \\
&=& d\theta_a(I_a V,e_{2l-1}) + d\theta_a(I_2 I_a V,\overline{e_{2l}}) \\ 
&& - \sum^{2n}_{i=1} \bigl(
d\theta_a(e_{i},e_{2l-1}) + d\theta_a(I_2 e_{i},\overline{e_{2l}}) \bigr) g(I_aV,\overline{e_{i}}). 
\end{eqnarray*}
We find that the right-hand side is equal to $2 [ g(V, e_{2l-1}) - d\theta_a(V, I_a e_{2l-1}) ]$ for all $a$.
Indeed, if $a=1$, the sum on the second line vanishes, and 
\begin{eqnarray*} 
d\theta_1(I_2 I_1 V,\overline{e_{2l}}) &=& d\theta_1(I_2 V, I_1 I_2 e_{2l-1})\\ 
&=& ( d\theta_1(I_2 V, I_1 I_2 e_{2l-1})  + d\theta_1(V, I_1 e_{2l-1}) ) - d\theta_1(V, I_1 e_{2l-1}) \\ 
&=& 2g(V, e_{2l-1}) - d\theta_1(V, I_1 e_{2l-1}). 
\end{eqnarray*} 
If $a=2$, the sum on the first line is equal to $2 d\theta_2(I_2 V, e_{2l-1})$, and the sum on 
the second line becomes 
$$
- 2 \sum^{2n}_{i=1} d\theta_2(e_{i}, I_2\overline{e_{2l}})  g(I_2V,\overline{e_{i}}) 
= - 2\, g(I_2V,\overline{e_{2l}}) = - 2\, g(V, e_{2l-1}), 
$$ 
since $d\theta_2(e_{i}, I_2\overline{e_{2l}}) = g(e_{i}, \overline{e_{2l}}) = \delta_{i, 2l}$. 
The $a=3$ case is similar. 

We thus conclude that 
\begin{eqnarray} 
\widehat{\omega}_{2l-1} - \omega_{2l-1} 
&=& \left(-3n - 3 + \frac{6}{n} \right) g(V, e_{2l-1}) + \left( \frac{3}{2} - \frac{3}{2n} \right) \sum_{a=1}^3 
d\theta_a(V, I_a e_{2l-1}) \nonumber \\ 
&=& - \frac{3n-3}{2n} h(V, e_{2l-1}). \nonumber 
\end{eqnarray}

We obtain a similar result for the left-hand side $\omega_{2l}$ of \eqref{require2.2}, and conclude that 
$\widehat{\omega}^{E\otimes H}$ vanishes if and only if 
$$
\frac{3n-3}{2n} h(V, e_{2l-1}) = \omega_{2l-1}\quad \mbox{and}\quad \frac{3n-3}{2n} h(V, e_{2l}) = \omega_{2l}
$$ 
for $l=1,\cdots, n$. 
By ultra-pseudoconvexity, there exists a unique $V$ which satisfies this last system of linear equations.
\hfill $\square$

\medskip
We now extend the above construction of the canonical connection for a hyper pseudohermitian structure 
to that for a \qua pseudohermitian structure. 
To do this, it suffices to verify that given a quaternionic pseudohermitian structure, 
the condition \eqref{require0} over each local hyper pseudohermitian structure is actually global. 
Let $(M,\Theta)$ be an ultra-pseudoconvex quaternionic pseudohermitian manifold 
of dimension $> 7$. 
There is a global $Sp(n)\cdot Sp(1)$-bundle $Q$ over $M$, and  
let $\mathcal{P}$ be the bundle of frames for $Q$ which are adapted (cf.~\eqref{rframe}) with respect to 
some triple $(I_a)$ of compatible complex structures; this is a principal $Sp(n)\cdot Sp(1)$-bundle over $M$. 
Let $\nabla$ be any 
$SO(4n)$-connection on $Q$, and for any local section 
$\varepsilon = (\varepsilon_1,\cdots, \varepsilon_{4n})$ of $\mathcal{P}$, let $\omega$ be the corresponding 
matrix of (real-valued) connection forms, given by $\nabla \varepsilon = \varepsilon\otimes \omega$. 
Note that this $\omega$ is essentially the same as the previous one (when $\nabla$ is the connection 
given by Proposition \ref{noncanconn1});  
as a collection of local matrix-valued forms, the present $\omega$ is the expression of the previous one in terms of 
real frames \eqref{rframe} rather than complex ones \eqref{cframe}. 
As before, we regard $\omega$ as being defined on $Q$. 
If $\varepsilon$ changes as $\varepsilon\mapsto \varepsilon a$, where $a$ is a local $Sp(n)\cdot Sp(1)$-valued 
function, then $\omega$ transforms as $\omega\mapsto a^{-1}\omega a + a^{-1} da$. 

Now let $\iota$ be the standard representation of $Sp(n)\cdot Sp(1)$ on $\H^n$, $\iota^*$ its dual, and let 
$\mathrm{Ad}$ be the adjoint representation of $SO(4n)$ on its Lie algebra $so(4n)$. 
By restriction, the last representation induces one of $Sp(n)\cdot Sp(1)$ on $(sp(n)+sp(1))^\perp$, 
which we denote by the same symbol. 
We then consider the representation $\iota^*\otimes \mathrm{Ad}$ of $Sp(n)\cdot Sp(1)$ on 
${\H^n}^*\otimes (sp(n)+sp(1))^\perp$, and construct the vector bundle 
$$
\mathcal{E} = \mathcal{P}\times_{\iota^*\otimes \mathrm{Ad}} ({\H^n}^*\otimes (sp(n)+sp(1))^\perp) 
= Q^*\otimes \mathcal{P}\times_{\mathrm{Ad}} (sp(n)+sp(1))^\perp. 
$$ 
Let $\omega^{\rm obs}$ be the $(sp(n)+sp(1))^\perp$-component of $\omega$. 
Then the above transformation law for $\omega$ ensures that the local forms $\omega^{\rm obs}$ give 
a global section of $\mathcal{E}$. 
According to the irreducible decomposition \eqref{irred_decomp}, the bundle $\mathcal{E}\otimes \C$ splits 
and $\omega^{\rm obs}$ thereby decomposes, both globally. 
Therefore, $\omega^{E\otimes H}$, the $E\otimes H$-component of $\omega^{\rm obs}$, is also global. 

By the obsevation we just made, we obtain the following conclusion. 

\begin{Theorem}\label{gluing}
Let $(M, \Theta=\{\theta_U\})$ be an ultra-pseudoconvex \qua pseudohermitian manifold 
of dimension $> 7$. 
Then the local canonical three-plane fields $\{(Q^\perp)_U\}$ and the local canonical connections 
$\{D_U\}$ for local hyper pseudohermitian structures 
patch together to give a global admissible three-plane field $Q^\perp$ and a global connection $D$, respectively. 
\end{Theorem}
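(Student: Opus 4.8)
The plan is to deduce the patching from the uniqueness clause of Theorem~\ref{cantriple0}, after checking that neither the construction of $\nabla$ in Proposition~\ref{noncanconn1} nor the defining condition \eqref{require0} for the canonical three-plane field is sensitive to the $SO(3)$-ambiguity in the choice of a local hyper CR structure within the defining covering of the quaternionic CR structure. Concretely, on $U$ write $(Q^\perp)_U$ for the canonical three-plane field and $D_U$ for the canonical connection furnished by Theorem~\ref{cantriple0} applied to the ultra-pseudoconvex hyper pseudohermitian manifold $(M|_U,\theta_U)$, which has dimension $>7$; since every nonempty overlap $U\cap U'$ still has dimension $>7$, it suffices to prove that $(Q^\perp)_U=(Q^\perp)_{U'}$ on $U\cap U'$, and then $D:=\nabla(Q^\perp)$ with $Q^\perp$ the common refinement is the desired global connection.

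First I would check that the class of admissible three-plane fields is intrinsic to the quaternionic CR structure. Suppose $\{(Q_a,I_a)\}$ on $U$ and $\{(Q_a',I_a')\}$ on $U'$ are related on $U\cap U'$ by $S=(s_{ab})$ as in \eqref{qcr}, and let $(T_1,T_2,T_3)$ be an admissible triple for the first, normalized so that $\theta_a(T_b)=\delta_{ab}$. A direct computation from $Q_{\bf v}'=Q_{S{\bf v}}$, $I_{\bf v}'=I_{S{\bf v}}$ and the defining conditions on $(T_1,T_2,T_3)$ shows that $(T_1',T_2',T_3')$, with $T_a'=\sum_b s_{ab}T_b$, is an admissible triple for $\{(Q_a',I_a')\}$ (one first checks $(\theta_{U'})_a(T_b')=\delta_{ab}$, using $SS^{t}=I$); moreover $\bigoplus_a\R T_a'=\bigoplus_a\R T_a$ since $S\in SO(3)$. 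Hence an admissible three-plane field for one of the two local structures is one for the other, and in particular $(Q^\perp)_U$ is, on $U\cap U'$, admissible also for $\{(Q_a',I_a')\}$.

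Next I would observe that for a fixed admissible three-plane field $Q^\perp$ the connection $\nabla$ of Proposition~\ref{noncanconn1} depends only on globally defined data. Indeed, inspecting \eqref{eq209}--\eqref{eq212}, the four formulas there determine $\nabla$ on all of $TM$ and involve only the splitting $TM=Q\oplus Q^\perp$ (hence the $Q$- and $Q^\perp$-components of Lie brackets), the restriction of $g=\Levi_\theta$ to $Q$, and the restriction of $g^\perp=\sum_a{\theta_a}^2$ to $Q^\perp$. By Proposition~\ref{prepLevi} the Levi forms agree, $\Levi_{\theta_U}=\Levi_{\theta_{U'}}$ on $U\cap U'$; and since $S\in SO(3)$, \eqref{theta_deform} gives $\sum_a(\theta_{U'})_a^2=\sum_a(\theta_U)_a^2$ there. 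Thus the connection attached to $Q^\perp$ via $\theta_U$ coincides on $U\cap U'$ with the one attached via $\theta_{U'}$; denote it $\nabla(Q^\perp)$. Finally, by the observation recorded just before the statement of the theorem, the adapted-frame bundle $\mathcal{P}$, the associated bundle $\mathcal{E}$, and hence the section $\omega^{\mathrm{obs}}$ and its component $\omega^{E\otimes H}$, depend only on the $Sp(n)\cdot Sp(1)$-structure of $Q$ together with the connection — not on a choice of compatible triple of complex structures — so \eqref{require0} imposed on $\nabla(Q^\perp)$ is literally the same condition whether read through $\theta_U$ or through $\theta_{U'}$.

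Combining these, on $U\cap U'$ the field $(Q^\perp)_U$ is admissible for $\{(Q_a',I_a')\}$ and its associated connection $\nabla((Q^\perp)_U)$ satisfies \eqref{require0} relative to $\theta_{U'}$; the uniqueness part of Theorem~\ref{cantriple0}, applied to $(M|_{U\cap U'},\theta_{U'})$, then forces $(Q^\perp)_U=(Q^\perp)_{U'}$ on $U\cap U'$. Hence the $\{(Q^\perp)_U\}$ glue to a global admissible three-plane field $Q^\perp$, and $D:=\nabla(Q^\perp)$ is a globally defined connection on $TM$ restricting to each $D_U$. The only genuinely delicate point is the bookkeeping of the $SO(3)$-transformation in the second and third paragraphs — namely that the family of admissible three-plane fields and the forms $g$, $g^\perp$ (hence $\nabla(Q^\perp)$ and $\omega^{E\otimes H}$) are all insensitive to it; once that is in place, the conclusion is essentially forced by the uniqueness in Theorem~\ref{cantriple0}, and there is no further obstacle.
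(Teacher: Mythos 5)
Your proposal is correct and follows essentially the same route as the paper: the paper's entire argument is the observation, made in the paragraph preceding the theorem, that $\omega^{\rm obs}$ and hence $\omega^{E\otimes H}$ is a globally well-defined section of the bundle $\mathcal{E}$ (independent of the choice of adapted frame and local hyper CR structure), combined implicitly with the uniqueness clause of Theorem \ref{cantriple0}. You merely make explicit the supporting bookkeeping that the paper leaves tacit --- that admissible three-plane fields, $g=\Levi_\Theta$ and $g^\perp=\sum_a\theta_a^2$ are all invariant under the $SO(3)$ transition, so that Proposition \ref{noncanconn1} and condition \eqref{require0} read the same through $\theta_U$ and $\theta_{U'}$.
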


\noindent
\Definition
Let $(M, \Theta)$ be an ultra-pseudoconvex \qua pseudohermitian manifold 
of dimension $> 7$. 
We call $Q^\perp$ and $D$ of Theorem \ref{gluing} the {\em canonical three-plane field}
and the {\em canonical connection}, respectively, associated with $\Theta$. 

\medskip
We now derive, for future use, the transformation law 
for the canonical triple under a conformal change of 
(hyper) pseudohermitian structure. 
\begin{Proposition}\label{conformal_change}
Let $(M,\theta)$ be an ultra-pseudoconvex hyper pseudohermitian manifold 
of dimension $> 7$. 
Let $\theta^\prime = e^{2f}\theta$, and $(T_1,T_2,T_3)$ (resp. $(T_1^\prime, T_2^\prime, T_3^\prime)$) 
the canonical triple corresponding to $\theta$ (resp. $\theta^\prime$). 
Then we have 
$$
T_a^\prime = e^{-2f}(T_a + 2I_a W), 
$$ 
where $W\in \Gamma(Q)$ is uniquely determined by  
\begin{equation}\label{cantriple6}
h(W,X) = -(2n+1)d_bf(X),\quad X\in Q. 
\end{equation}
\end{Proposition}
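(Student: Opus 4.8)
The plan is to reduce the statement to Theorem~\ref{cantriple0} by splitting the passage from $\theta$ to $\theta'=e^{2f}\theta$ into a conformal rescaling that keeps the reference three-plane field fixed, followed by the three-plane-field modification $\widehat T_a=T_a+2I_aV$ already analysed in the proof of Theorem~\ref{cantriple0}. First I would record the scaling facts: $Q_a$ and $I_a$ are unchanged, on $Q$ one has $\Levi_{\theta'}=e^{2f}\Levi_\theta$ and $d\theta_a'(\cdot,I_a\cdot)=e^{2f}d\theta_a(\cdot,I_a\cdot)$, hence $h$ rescales as $h'=e^{2f}h$; so $(M,\theta')$ is again ultra-pseudoconvex of dimension $>7$, Theorem~\ref{cantriple0} provides a unique canonical triple $(T_1',T_2',T_3')$ for it, and non-degeneracy of $h$ produces a unique $W\in\Gamma(Q)$ solving \eqref{cantriple6}. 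Setting $S_a:=e^{-2f}(T_a+2I_aW)$, I would verify that $(S_1,S_2,S_3)$ is admissible for $\theta'$ — the admissibility conditions survive because $I_a$ and $Q_b\cap Q_c$ are unchanged and $I_aW\in\Gamma(Q)$, and $\theta_a'(S_a)=\theta_a(T_a+2I_aW)=\theta_a(T_a)=1$ since $I_aW\in\ker\theta_a$ — so by the uniqueness in Theorem~\ref{cantriple0} it is enough to show that the connection $\nabla''$ attached by Proposition~\ref{noncanconn1} to $(M,\theta')$ and $\bigoplus_{a=1}^{3}\R S_a$ satisfies \eqref{require0}; then $S_a=T_a'$.

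To check \eqref{require0} for $\nabla''$ I would compare it with the canonical connection $D$ of $(M,\theta)$ in two steps. In step one, the admissible triple $\widetilde T_a:=e^{-2f}T_a$ of $\theta'$ spans the \emph{same} three-plane field $Q^\perp$ as the canonical triple of $\theta$, so the splitting $TM=Q\oplus Q^\perp$ is unchanged; since only the Levi form gets multiplied by $e^{2f}$, the Koszul-type formula \eqref{eq209} shows that the $Q$-partial part of the associated connection $\widetilde\nabla$ is the conformal change of that of $D$, namely $\widetilde\nabla_XY=\nabla^Q_XY+d_bf(X)Y+d_bf(Y)X-\Levi_\theta(X,Y)\,\mathrm{grad}\,f$ for $X,Y\in\Gamma(Q)$, where $\nabla^Q$ is the $Q$-partial part of $D$ and $\mathrm{grad}\,f\in\Gamma(Q)$ is the $\Levi_\theta$-dual of $d_bf$. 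Using the $\Levi_{\theta'}$-unitary frame $\widetilde e_i=e^{-f}e_i$ built from \eqref{rframe}--\eqref{cframe}, I would express the connection forms of $\widetilde\nabla$ through those of $D$ and $d_bf$; since $(T_a)$ is canonical for $\theta$, the left-hand sides of \eqref{require2.1}, \eqref{require2.2} vanish for $D$ by Lemma~\ref{require_rewrite}, so for $\widetilde\nabla$ they become explicit tensors linear in $d_bf$. In step two, passing from $\widetilde T_a$ to $S_a=\widetilde T_a+2I_a(e^{-2f}W)$ is exactly the modification in the proof of Theorem~\ref{cantriple0} with $V=e^{-2f}W$, performed for $\theta'$; that computation therefore gives $\omega''_{2l-1}-\widetilde\omega_{2l-1}=-\tfrac{3n-3}{2n}\,h'(e^{-2f}W,\widetilde e_{2l-1})=-\tfrac{3n-3}{2n}\,h(W,\widetilde e_{2l-1})$, and similarly for the index $2l$, where $\omega''_\bullet$ and $\widetilde\omega_\bullet$ denote the left-hand sides of \eqref{require2.1}, \eqref{require2.2} evaluated for $\nabla''$ and $\widetilde\nabla$.

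Adding the two contributions, condition \eqref{require0} for $\nabla''$ amounts to requiring that the step-one expression (the left-hand side of \eqref{require2.1} for $\widetilde\nabla$, say) equal $\tfrac{3n-3}{2n}\,h(W,\widetilde e_{2l-1})$, and similarly for \eqref{require2.2}. By Schur's lemma applied to the irreducible $Sp(n)\cdot Sp(1)$-module $E\otimes H$ — in which both $d_bf$ and the relevant component of $\omega^{\rm obs}$ live — the step-one expression equals $\kappa_n\,d_bf(\widetilde e_{2l-1})$ for a universal constant $\kappa_n$, so it remains only to confirm $\tfrac{2n}{3n-3}\,\kappa_n=-(2n+1)$. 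This I would settle either by finishing the step-one calculation (the main computational effort, parallel to but lighter than the one in the proof of Theorem~\ref{cantriple0}) or, more economically, by evaluating on $S^{4n+3}$, where $h=(2n+1)\Levi_{\theta_S}$ and the transformation law \eqref{transform2} in Example~\ref{qsphere} gives $W=-d_bf^\#$, hence $h(W,X)=-(2n+1)d_bf(X)$. Either way the $W$ of \eqref{cantriple6} is precisely the one making $\omega''=0$, so $\bigoplus_a\R S_a$ is the canonical three-plane field of $\theta'$, $T_a'=S_a=e^{-2f}(T_a+2I_aW)$, and $W$ is uniquely determined by \eqref{cantriple6}. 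The main obstacle is the step-one conformal computation (equivalently, the value of $\kappa_n$): one must carefully propagate the conformal correction term and the frame rescaling through the intricate expressions \eqref{require2.1}, \eqref{require2.2}.
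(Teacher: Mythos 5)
Your argument is structurally the paper's own proof, reorganized: the paper writes the single formula \eqref{cantriple4} for $\omega'_{IJ}(X)-\omega_{IJ}(X)$, whose last three terms are exactly your step-two modification by $V=e^{-2f}W$ (i.e.\ \eqref{difference_omega}) and whose first three terms are your step-one conformal correction, and then records that the $E\otimes H$-component of the total is $-\tfrac{3n-3}{2n}\{h(e_i,W)+(2n+1)d_bf(e_i)\}$, which vanishes precisely when \eqref{cantriple6} holds. Your admissibility check for $e^{-2f}(T_a+2I_aW)$, the reduction to the uniqueness in Theorem \ref{cantriple0}, and the Schur's lemma observation that the conformal part of the $E\otimes H$-component must be $\kappa_n\,d_bf$ are all sound. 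The one genuine gap is in your ``more economical'' route for evaluating $\kappa_n$: concluding $W=-d_bf^{\#}$ on $S^{4n+3}$ from \eqref{transform2} presupposes that the Reeb-type triple $(T_1,T_2,T_3)$ of Example \ref{qsphere} is the \emph{canonical} triple of $\theta_S$ in the sense of Theorem \ref{cantriple0}. That fact is true but is nowhere available at this point in the paper --- Remark \ref{canonical_triple_S^{4n+3}} is stated as a consistency check \emph{deduced from} the present proposition, and the identification of the Reeb three-plane field with the canonical one is only proved later (Proposition \ref{cannonical=reeb}), so using it here without proof is circular in spirit. It can be supplied independently: the canonical three-plane field depends only on the quaternionic pseudohermitian structure (Theorem \ref{gluing}) and is therefore invariant under $K=Sp(n+1)\cdot Sp(1)$, and the only $K$-invariant admissible three-plane field is the Reeb one, since two invariant admissible fields differ by an invariant section of $Q$ and the isotropy action of $Sp(n)\cdot Sp(1)$ on $Q_q\cong\H^n$ has no nonzero fixed vectors. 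Absent that supplement, you must fall back on your other option --- carrying out the step-one computation of $\kappa_n$ --- which is exactly the coefficient calculation the paper performs and is the real content of the constant $2n+1$ in \eqref{cantriple6}.
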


\begin{proof}
We regard $W$ as the unknown and verify that it must satisfy \eqref{cantriple6}. 
Set $\alpha = d_b f$, $g = \Levi_\theta$ and $g' = \Levi_{\theta'} = e^{2f} g$. 
Let $D$ (resp. $D'$) be the canonical connection for $\theta$ (resp. $\theta'$). 
As in the proof of Theorem \ref{cantriple0}, we obtain for $X\in Q$, 
\begin{eqnarray}\label{cantriple4}
\omega_{IJ}^\prime (X) &=& \omega_{IJ}(X) + \alpha(X)g(e_I,e_J) + \alpha(e_I)g(X,e_J) - \alpha(e_J) g(X,e_I)\nonumber\\ 
&& + \sum_{a=1}^3 d\theta_a(X,e_I) g(I_aW, e_J) - \sum_{a=1}^3 d\theta_a(X,e_J) g(I_aW, e_I)\\ 
&& - \sum_{a=1}^3 d\theta_a(e_I,e_J) g(I_aW, X),\nonumber 
\end{eqnarray}
where $\omega_{IJ}$ (resp. $\omega_{IJ}^\prime$) are connection forms 
of $D$ (resp. $D'$), $\{ e_1,\cdots, e_{2n} \}$ is a $g$-unitary frame as in \eqref{cframe} and 
the indices $I, J$ range over $1,2,\dots,2n,\overline{1},\overline{2},\dots,\overline{2n}$.
It should be also noted that $\omega_{IJ}^\prime$ are computed with respect to the $g'$-unitary 
frame $\{ e^{-f} e_i \}$. 
Since the last three terms on the right-hand side appear in \eqref{difference_omega}, we can use 
the computation in the proof of Theorem \ref{cantriple0}. 
Denoting the left-hand sides of \eqref{require2.1}, \eqref{require2.2} by $\omega_{2l-1}$, $\omega_{2l}$, 
respectively, we obtain 
$$
e^f \omega_i^\prime - \omega_i = - \frac{3n-3}{2n} \left\{ h(e_i, W) + (2n+1)\alpha(e_i) \right\}
$$
for $i=1,\cdots, 2n$. 
Again, note that $\omega_i^\prime$ are computed with respect to $\{ e^{-f} e_i \}$. 
Since the left-hand sides of these identities vanish, we must have 
$
h(e_i, W) = - (2n+1)\alpha(e_i)
$
for $i=1,\cdots, 2n$. 
This completes the proof of Proposition \ref{conformal_change}.
\end{proof}

\noindent 
\Remark\label{canonical_triple_S^{4n+3}} 
For the sphere $S^{4n+3}$, we have $h = (2n+1) \Levi_\theta$. 
Therefore, \eqref{cantriple6} gives $W = - d_b f^\#$, which is consistent with
the transformation law \eqref{transform2}.

\medskip
We conclude this section with some comments on the curvature 
of the canonical connection. 
Let $(M, \Theta)$ be an ultra-pseudoconvex \qua pseudohermitian manifold 
of dimension $> 7$, 
and $D$ the associated canonical connection. 
Let $R$ and $\Ric$ denote the curvature and Ricci tensors of $D$, respectively.
For $X,Y\in Q$, we have
$
\Ric(X,Y) = \sum_{i=1}^{4n} g(R(\varepsilon_i,X)Y,\varepsilon_i),
$
where $\{\varepsilon_1,\dots,\varepsilon_{4n}\}$ is an orthonormal basis for $Q$ 
with respect to the Levi form $g = \Levi_\theta$. 
The {\em pseudohermitian Ricci tensor} $r$ is the component of $\Ric|_Q$
(restriction to $Q$) which is symmetric and invariant under $I,J, K$.
The {\em pseudohermitian scalar curvature} is $s = \tr_g (\Ric|_Q) = \sum_{i=1}^{4n} \Ric(\varepsilon_i, \varepsilon_i)$. 

Let $\theta_S$ and $\theta_{H}$ be the standard pseudohermitian structures 
of the sphere $S^{4n+3}$ and the quaternionic Heisenberg group ${\mathcal H}^{4n+3}$, 
respectively. 
Recall from \S 2 that they are related by $\theta_S = e^{2f}\sigma \theta_{H} \sigma^{-1}$ 
for some real-valued function $f$ and $Sp(1)$-valued function $\sigma$. 

The curvature of $\theta_H$ 
vanishes identically, and the curvature of $\theta_S$ coincides with 
that of $e^{2f} \theta_{H}$. 
There are formulas computing the curvature of 
the pseudohermitian structure of the form $e^{2f} \theta_H$, 
and by using them, we obtain $r_{\theta_S} = 2(n+2)\Levi_{\theta_S}$ and 
$s_{\theta_S} = 8n(n+2)$. 

In a future work, we shall study the curvature of quaternionic 
pseudohermitian manifold in detail. 

\section{Proof of Lemma \ref{require_rewrite}}

Let $\omega$, $\omega^{\rm obs}$ and $\omega^{E\otimes H}$ be the forms as in the previous section. 
Recall that we regard them as being defined on $Q$. 
Then we have 

\begin{Lemma}\label{omega_eh} 
The coefficients of $\omega^{E\otimes H}$ corresponding to a 
standard basis of $E\otimes H$ are given by 
the left-hand sides of \eqref{require2.1} and \eqref{require2.2} in Lemma \ref{require_rewrite}
with $l=1,\dots,n$, and their complex conjugates.

\end{Lemma}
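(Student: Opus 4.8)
The plan is to make the abstract decomposition \eqref{irred_decomp} explicit at the level of the frame $\{e_i,\overline{e_i}\}$ and then identify the projection onto the $E\otimes H$-summand. First I would fix standard models: the complex symplectic spaces $E=\C^{2n}$ and $H=\C^2$ with symplectic forms $\epsilon_E$, $\epsilon_H$, so that $\H^n\otimes\C\cong E\otimes H$ as in \eqref{hn}; under this identification the unitary frame $\{e_1,\dots,e_{2n}\}$ for $\onezero$ together with $\{\overline{e_1},\dots,\overline{e_{2n}}\}$ corresponds to a tensor basis of $E\otimes H$, the relations $Je_{2k-1}=\overline{e_{2k}}$, $Je_{2k}=-\overline{e_{2k-1}}$, $Ke_{2k-1}=-\rmo\overline{e_{2k}}$, $Ke_{2k}=\rmo\overline{e_{2k-1}}$ recorded after \eqref{cframe} encoding how $Sp(1)$ acts on the $H$-factor and how $\epsilon_H$ pairs the two $H$-weights. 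Then I would recall, following Swann \cite{swa} and \eqref{congseq}, how $sp(n)\cong S^2E$, $sp(1)\cong S^2H$ and $(sp(n)+sp(1))^\perp\cong\Lambda^2_0E\otimes S^2H$ sit inside $so(4n)\otimes\C\cong S^2E\oplus S^2H\oplus(\Lambda^2_0E\otimes S^2H)$, and write $\omega^{\rm obs}$ in the frame: it is obtained from the skew-hermitian matrix $(\omega_{i\bar j})$ and the skew-symmetric matrix $(\omega_{ij})$ by deleting their $sp(n)$- and $sp(1)$-components, and the trace parts thereby removed (with the evident trace-coefficient normalizations fixed by $\dim_\C E=2n$ and by the $n$ quaternionic blocks) are precisely what produce the coefficients $\tfrac1{2n}(\omega_{2k-1,\overline{2k-1}}+\omega_{2k,\overline{2k}})$ and $\tfrac1n\omega_{2k-1,2k}$ appearing in \eqref{require2.1}, \eqref{require2.2}.

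Next, since $E\otimes H$ occurs with multiplicity one in \eqref{irred_decomp}, Schur's lemma shows that the $Sp(n)\times Sp(1)$-equivariant projection onto its isotypic summand is, up to a nonzero scalar, the unique equivariant map $Q^*\otimes(\Lambda^2_0E\otimes S^2H)\to E\otimes H$; tracing through \eqref{congseq2}, it is the double contraction implementing $E\otimes\Lambda^2_0E\to E$ and $H\otimes S^2H\to H$ — namely, contract the $E^*$-factor of $Q^*$ into a slot of $\Lambda^2_0E\subset E\otimes E$ and the $H^*$-factor into a slot of $S^2H\subset H\otimes H$. I would write this contraction out in the frame $\{e_i,\overline{e_i}\}$: the contraction against the $Q^*$-slot yields the sums $\sum_{k=1}^{n}$ in \eqref{require2.1}, \eqref{require2.2}, while the antisymmetry of $\Lambda^2_0E$ versus the symmetry of $S^2H$, together with how $\epsilon_E$ and $\epsilon_H$ enter the identifications, produces the factors $\tfrac12$ and the precise sign pattern on the terms $\omega_{IJ}(e_{2k-1})$, $\omega_{IJ}(\overline{e_{2k}})$, and so on. The overall scalar is irrelevant for Lemma \ref{require_rewrite}, which only concerns the zero locus of $\omega^{E\otimes H}$; if wanted, it can be pinned down by evaluating on a highest-weight vector of $E\otimes H$.

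Finally, reading off the components of the resulting $E\otimes H$-valued tensor with respect to a standard basis, I would check that the basis vector carrying the $E$-index $2l-1$ (with one fixed $H$-weight) gives exactly the left-hand side of \eqref{require2.1}, the one carrying the $E$-index $2l$ gives the left-hand side of \eqref{require2.2}, and the remaining $n$ components (the other $H$-weight) are the complex conjugates of these — the conjugation being forced by the fact that $\omega^{\rm obs}$ is a real tensor (recall $\nabla\overline{e_i}=\overline{\nabla e_i}$) and complex conjugation swaps the two $H$-weights. The hard part is not conceptual but the bookkeeping: one must consistently translate between the $E\otimes H$ index notation and the frame $\{e_i,\overline{e_i}\}$ using the $J,K$-relations above, and then track all signs and the normalizations $\tfrac12,\tfrac1n,\tfrac1{2n}$ through the two contractions and the trace removals. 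I expect getting the $Sp(1)$-equivariance exactly right — that is, pinning down the role of $J$ and $K$ in the $H$-factor — to be the main, though routine, obstacle.
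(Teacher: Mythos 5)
Your proposal takes essentially the same route as the paper's proof: both make the identifications $\H^n\otimes\C\cong E\otimes H$ and $(sp(n)+sp(1))^\perp\otimes\C\cong\Lambda^2_0E\otimes S^2H$ explicit in the adapted frame and extract the $E\otimes H$-component using its multiplicity-one occurrence in \eqref{irred_decomp}, so that the coefficients are read off from an explicit equivariant contraction/pairing. The only cosmetic difference is that the paper computes each coefficient by pairing $\omega$ against the image of the corresponding standard basis vector under the explicit embeddings $s_E$, $s_H$ (the adjoint formulation of the double contraction you describe), and then carries out the frame bookkeeping that your outline defers.
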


The rest of this section is devoted to the proof of Lemma \ref{omega_eh}. 

To prove Lemma \ref{omega_eh}, we start by making the correspondences (\ref{hn}) and (\ref{spnsp1}) more explicit.
For (\ref{hn}), let $I: {\Bbb H}^n \rightarrow {\Bbb H}^n$ be the complex
structure given by the right multiplication of $\mbox{\bf i}^{-1}$,
and set $V = \{ X \in {\Bbb H}^n \otimes \C \mid IX = \sqrt{-1}X \}$, 
so that we have ${\Bbb H}^n \otimes {\Bbb C} = V \oplus \overline{V}$.
Let $(\varepsilon_1,\dots,\varepsilon_{4n})$ be the standard basis for
${\Bbb H}^n = {\Bbb R}^{4n}$, and define a complex basis for $V$ by
$e_{2k-1}=(\varepsilon_{4k-3}-\sqrt{-1}\varepsilon_{4k-2})/\sqrt{2}$, 
$e_{2k}=(\varepsilon_{4k-1}-\sqrt{-1}\varepsilon_{4k})/\sqrt{2}$ ($k = 1,\dots,n$). 
Also, let $({\bf e}_1,\dots, {\bf e}_{2n})$ and $({\bf f}_1, {\bf f}_2)$
respectively denote the standard basis for $E=\C^{2n}$ and $H=\C^2$.
Then the correspondence
$$
e_{2k-1}\,\, \leftrightarrow\,\, {\bf e}_{2k-1} \otimes {\bf f}_2,
\,\, e_{2k}\,\, \leftrightarrow\,\, {\bf e}_{2k} \otimes {\bf f}_2, \,\, 
\overline{e_{2k-1}}\,\, \leftrightarrow\,\, -{\bf e}_{2k} \otimes
{\bf f}_1,\,\, \overline{e_{2k}}\,\, \leftrightarrow\,\,
{\bf e}_{2k-1} \otimes {\bf f}_1
$$
($k = 1,\dots, n$) gives an isomorphism
${\Bbb H}^n \otimes {\Bbb C} \cong E \otimes H$.
For (\ref{spnsp1}), we can find the elements of $(sp(n) + sp(1))^\perp \otimes \C$ 
corresponding to generators of $\Lambda^2_0 E \otimes S^2 H$,
by tracing the isomorphisms in (\ref{congseq}) backwards: 
\begin{eqnarray*}
({\bf e}_{2k-1}\wedge {\bf e}_{2l-1})\otimes ({\bf f}_1\cdot {\bf f}_1)
&\leftrightarrow& \overline{e_{2k}}\wedge \overline{e_{2l}},\\ 
({\bf e}_{2k-1}\wedge {\bf e}_{2l-1})\otimes ({\bf f}_2\cdot {\bf f}_2)
&\leftrightarrow& e_{2k-1} \wedge e_{2l-1},\\
({\bf e}_{2k-1}\wedge {\bf e}_{2l-1})\otimes ({\bf f}_1\cdot {\bf f}_2)
&\leftrightarrow& \frac{1}{2}
(e_{2k-1}\wedge \overline{e_{2l}} - e_{2l-1}\wedge \overline{e_{2k}}),
\end{eqnarray*}
\begin{eqnarray*}
({\bf e}_{2k}\wedge {\bf e}_{2l}) \otimes ({\bf f}_1\cdot {\bf f}_1)
&\leftrightarrow& \overline{e_{2k-1}}\wedge \overline{e_{2l-1}},\\
({\bf e}_{2k}\wedge {\bf e}_{2l})\otimes ({\bf f}_2\cdot {\bf f}_2)
&\leftrightarrow& e_{2k} \wedge e_{2l}, \\
({\bf e}_{2k}\wedge {\bf e}_{2l})\otimes ({\bf f}_1\cdot {\bf f}_2)
&\leftrightarrow& \frac{1}{2}
(- \overline{e_{2k-1}} \wedge e_{2l} + \overline{e_{2l-1}} \wedge e_{2k}),
\end{eqnarray*}
\begin{eqnarray*}
({\bf e}_{2k}\wedge {\bf e}_{2l-1})_0\otimes ({\bf f}_1\cdot {\bf f}_1)
&\leftrightarrow& - \overline{e_{2k-1}}\wedge \overline{e_{2l}}
+ \frac{1}{n} \delta_{kl} \sum_{m=1}^n \overline{e_{2m-1}}\wedge
\overline{e_{2m}},\\
({\bf e}_{2k}\wedge {\bf e}_{2l-1})_0\otimes ({\bf f}_2\cdot {\bf f}_2)
&\leftrightarrow& e_{2k} \wedge e_{2l-1}
+ \frac{1}{n} \delta_{kl} \sum_{m=1}^n e_{2m-1} \wedge e_{2m},\\
({\bf e}_{2k}\wedge {\bf e}_{2l-1})_0\otimes ({\bf f}_1\cdot {\bf f}_2)
&\leftrightarrow& \frac{1}{2} (- \overline{e_{2k-1}} \wedge e_{2l-1}
- \overline{e_{2l}} \wedge e_{2k})\\
&&\mbox{} + \frac{1}{2n} \delta_{kl} \sum_{m=1}^n
(\overline{e_{2m-1}}\wedge e_{2m-1} + \overline{e_{2m}}\wedge e_{2m}). 
\end{eqnarray*}

The isomorphism $S^3 H \oplus H \cong H \otimes S^2 H$ embeds $H$ into $H \otimes S^2 H$ by
$$
s_H : w\in H \,\,\mapsto\,\,  {\bf f}_1 \otimes ({\bf f}_2 \cdot w)
- {\bf f}_2 \otimes ({\bf f}_1 \cdot w) \in H \otimes S^2 H. 
$$
Likewise, the isomorphism $K \oplus \Lambda^3_0 E \oplus E\cong E \otimes \Lambda^2_0 E$ 
embeds $E$ into $E \otimes \Lambda^2_0 E$ by
$$s_E : w \in E \,\,\mapsto\,\, \sum_{k=1}^n \left[
{\bf e}_{2k-1} \otimes ({\bf e}_{2k} \wedge w)_0
- {\bf e}_{2k} \otimes (\mbox{\bf e}_{2k-1} \wedge w)_0 \right]
\in E \otimes \Lambda^2_0 E.
$$

We are now ready to give 

\medskip\noindent
{\em Proof of Lemma  \ref{omega_eh}.}\quad 
We shall identify the coefficient of $\omega^{E\otimes H}$ corresponding to ${\bf e}_{2l-1}\otimes {\bf f}_1
\in E\otimes H$, to which corresponds the following element of 
$(\H^n \otimes \Lambda^2 \H^n) \otimes \C$: 
\begin{eqnarray*}
&&\sum_{k=1}^n \biggl\{
\overline{e_{2k}}\otimes \biggl[ \frac{1}{2}
(- \overline{e_{2k-1}}\wedge e_{2l-1} - \overline{e_{2l}}\wedge e_{2k})
+ \frac{1}{2n} \delta_{kl} \sum_{m=1}^n
(\overline{e_{2m-1}}\wedge e_{2m-1} + \overline{e_{2m}}\wedge e_{2m})\biggr]\\
&&\qquad + \overline{e_{2k-1}}\otimes \frac{1}{2} (e_{2k-1}\wedge \overline{e_{2l}} - e_{2l-1}\wedge \overline{e_{2k}}) 
 - e_{2k-1}\otimes \biggl[ - \overline{e_{2k-1}}\wedge \overline{e_{2l}}
+ \frac{1}{n} \delta_{kl} \sum_{m=1}^n \overline{e_{2m-1}}\wedge \overline{e_{2m}}\biggr]\\
&&\qquad
+ e_{2k}\otimes (\overline{e_{2k}}\wedge \overline{e_{2l}})\biggr\}. 
\end{eqnarray*}
Note that at each point $q \in M$, we can express $\omega$ as
\begin{eqnarray*}
\omega = \sum_{1 \leq i < j \leq 2n}\omega_{ij}{\otimes}\varphi_i \wedge \varphi_j
+ \sum_{1 \leq i < j \leq 2n}\omega_{\bar{i}\bar{j}} {\otimes}\overline{\varphi_i} \wedge \overline{\varphi_j}
+ \sum^{2n}_{i,j=1}\omega_{i\bar{j}}{\otimes}\varphi_i \wedge \overline{\varphi_j},
\end{eqnarray*}
where $(\varphi_1,\dots,\varphi_{2n})$ is the dual of the unitary basis $(e_1,\dots, e_{2n})$ for $(Q_q)^{1,0}\cong V$. 
Then the coefficient of $\omega^{E\otimes H}$ corresponding to ${\bf e}_{2l-1}\otimes {\bf f}_1$ is given by 
\begin{eqnarray*}
&& \sum_{k=1}^n \biggl\{\frac{1}{2} (-\, \omega_{\overline{2k-1},2l-1} - \omega_{\overline{2l},2k})(\overline{e_{2k}})
+ \frac{1}{2n}(\omega_{\overline{2k-1},2k-1} + \omega_{\overline{2k},2k}) (\overline{e_{2l}}) \\
&&\qquad +\, \frac{1}{2} (\omega_{2k-1,\overline{2l}} - \omega_{2l-1,\overline{2k}}) (\overline{e_{2k-1}})
+ \omega_{\overline{2k-1},\overline{2l}} (e_{2k-1}) - \frac{1}{n} \omega_{\overline{2k-1},\overline{2k}} (e_{2l-1})
\\
&&\qquad 
+\, \omega_{\overline{2k},\overline{2l}} (e_{2k}) \biggr\},
\end{eqnarray*}
which is the complex conjugate of the left-hand side of \eqref{require2.2}. 
Likewise, computing the coefficients corresponding to the other basis elements of $E\otimes H$,
we obtain the left-hand sides of \eqref{require2.1}, their complex conjugates and those of \eqref{require2.2}. 
\hfill$\square$

\section{Comparison to quaternionic contact structure} 

As mentioned in the introduction, some quaternionic analogues of CR structures 
other than those in this paper have been studied by several authors 
(cf.\,\cite{alkami0}, \cite{alkami}, \cite{biq}, \cite{hern}). 

In this section, we first review the definition of quaternionic contact structure, 
introduced by Biquard \cite{biq}, and the canonical connection, called the Biquard connection,  
associated with a choice of metric. 
We then compare the quaternionic CR structure to the quaternionic contact structure. 
We observe that while a quaternionic contact structure can always 
be ``extended'' to a quaternionic CR structure, the quaternionic contact structure is 
more restrictive than the quaternionic CR structure. 

\medskip\noindent
\Definition\label{qc_structure}
A {\em quaternionic contact structure} 
on a $(4n+3)$-dimensional manifold $M$ 
is a corank three bundle $Q$  
equipped with a $CSp(n)\cdot Sp(1)$-structure satisfying a compatibility 
condition. 
That is, we have a conformal class $[\gamma]$ of metrics on $Q$ and a two-sphere bundle $\mathbb{I}$ 
over $M$ of 
complex structures $I\colon Q\rightarrow Q$, $I^2 = -\id$, and these satisfy the following conditions: 
\begin{enumerate}
\renewcommand{\theenumi}{\roman{enumi}}
\renewcommand{\labelenumi}{(\theenumi)}
\item $\gamma(IX, IY) = \gamma(X, Y)$ for all $I\in \mathbb{I}$ and $X,Y\in Q$.
\item $\mathbb{I}$ locally admits 
sections $I_a$, $a=1,2,3$, satisfying the quaternion relations $I_1I_2 = - I_2I_1 = I_3$ and 
$\mathbb{I} = \{ v_1I_1 + v_2I_2 + v_3I_3 \mid {v_1}^2 + {v_2}^2 + {v_3}^2 = 1 \}$. 
\item $Q$ is locally the kernel of an $\R^3$-valued one-form $\eta = (\eta_1, \eta_2, \eta_3)$ 
satisfying the compatibility relations 
\begin{equation}\label{compatibility_relation1}
\gamma(I_aX, Y) = d\eta_a(X, Y),\quad a=1,2,3, 
\end{equation}
where $X,Y\in Q$.
\end{enumerate}

\medskip
Note that \eqref{compatibility_relation1} is equivalent to $\gamma(X,Y) = d\eta_a(X, I_aY)$; 
in particular, \eqref{compatibility_relation1} implies the condition (i). 
Note also that if \eqref{compatibility_relation1} holds, then for any other triple 
$(I_1', I_2', I_3')$ of complex structures as in the condition (ii), there exists 
an $\R^3$-valued one-form $\eta' = (\eta_1', \eta_2', \eta_3')$ so that the compatibility relations 
$\gamma(I_a'X, Y) = d\eta_a'(X, Y)$ hold. 
Indeed, if $I_a' = \sum_{p=1}^3 s_{ap} I_p$, where $(s_{ap})$ is an $SO(3)$-valued function, 
then it suffices to choose $\eta_a' = \sum_{p=1}^3 s_{ap} \eta_p$. 
(Actually, this is a unique choice of $\eta_a'$, as verified by argument similar to that 
in the proof of Proposition \ref{QC_more_restrictive} below.) 

On a quaternionic contact manifold of 
dimension $> 7$ with a choice of metric $\gamma$ on $Q$ in the conformal 
class, Biquard constructed a canonical connection $D^B$, called the {\em Biquard connection} 
 (cf.~\cite{duc1} for the seven-dimensional case). 
He also gave a distinguished rank three subbundle $Q^\perp$ of $TM$ 
complementary to $Q$. 
The connection $D^B$ and our canonical connection $D$ on a quaternionic 
pseudohermitian manifold are similar but differ in some respects: 
first, $D^B$ preserves the $Sp(n)\cdot Sp(1)$-structure of $Q$, while 
$D$ does not in general, because of the generality of our structure; 
second, the torsion tensor $\Tor$ of $D^B$, restricted to $Q\times Q^\perp$, 
has no $Q^\perp$-component and is more sensitive to the 
$GL(n, \mathbb{H})\cdot Sp(1)$-structure of $Q$, 
because of the ``quaternionic extension'' used in the construction of $D^B$. 

The bundle $Q^\perp$ can be explicitly described. 
Choose a local $\R^3$-valued one-form $(\eta_1, \eta_2, \eta_3)$ as in 
Definition \ref{qc_structure}. 
Then $Q^\perp$ is locally generated by vector fields 
$\{R_a\}_{a=1,2,3}$ characterized by 
\begin{equation}\label{Reeb_condition}
\eta_a(R_b) =\delta_{ab},\quad d\eta_a(R_a, X) = 0,\quad X\in Q, 
\end{equation}
and they further satisfy
\begin{equation*}\label{stronger_identity}
d\eta_b(R_a,X) = - d\eta_a(R_b, X),\quad X\in Q.
\end{equation*}

Let $M$ be a quaternionic contact manifold, with the associated corank three subbundle $Q$ of $TM$ 
and two-sphere bundle $\mathbb{I}$ of complex structures of $Q$.
Then there are quaternionic CR structures having $(Q, \mathbb{I})$ as the underlying structure. 
To show this, fix a metric $\gamma$ on $Q$ and choose an arbitrary rank three bundle $Q^\perp$ 
transverse to $Q$. 
First, we construct a local hyper CR structure. 
So choose $(I_1,I_2,I_3)$ locally and then choose a local $\R^3$-valued 
one-form $(\eta_1,\eta_2,\eta_3)$ so that \eqref{compatibility_relation1} holds.  
Since $\eta_a|_{Q^\perp}$ form a local coframe for $Q^\perp$, there is a unique triple 
$(T_1, T_2, T_3)$ of local sections of $Q^\perp$ such that $\eta_a(T_b) = \delta_{ab}$.
Then set $Q_a = Q\oplus \R T_b\oplus \R T_c$, 
and extend $I_a\colon Q\rightarrow Q$ 
to $I_a\colon Q_a\rightarrow Q_a$ by defining $I_aT_b = T_c$ and $I_aT_c = -T_b$. 
Note that we have $\ker \eta_a = Q_a$ and $\eta_a\circ I_b = \eta_c$, 
and $\{ (Q_a, I_a) \}_{a=1,2,3}$ satisfies the conditions for an almost hyper CR structure. 
Moreover, it is integrable. 
Indeed, for $X,Y\in \Gamma(Q)$, we compute using \eqref{compatibility_relation1}: 
\begin{eqnarray*}
\eta_a([X,Y] - [I_aX, I_aY]) &=& -d\eta_a(X,Y) + d\eta_a(I_aX,I_aY)\\ 
&=& - \gamma(I_aX,Y) - \gamma(X,I_aY)\\ 
&=& 0,
\end{eqnarray*} 
\begin{eqnarray*}
\lefteqn{\eta_b( I_a([X,Y] - [I_aX, I_aY]) - ([X,I_aY]+[I_aX,Y]) )}\\  
&=& -\eta_c([X,Y] - [I_aX, I_aY]) - \eta_b([X,I_aY]+[I_aX,Y])\\ 
&=& d\eta_c(X,Y) - d\eta_c(I_aX, I_aY) + d\eta_b(X,I_aY) + d\eta_b(I_aX,Y)\\ 
&=& \gamma(I_cX, Y) - \gamma(I_cI_aX, I_aY) + \gamma(I_bX, I_aY) + \gamma(I_bI_aX,Y)\\ 
&=& 0, 
\end{eqnarray*}
and likewise, 
$$
\eta_c( I_a([X,Y] - [I_aX, I_aY]) - ([X,I_aY]+[I_aX,Y]) ) = 0.
$$
In this way, for each local choice of $(I_1,I_2,I_3)$, we have the corresponding local 
hyper CR structure $\{ (Q_a, I_a) \}$. 
We now verify that two such local hyper CR structures $\{ (Q_a, I_a) \}$ and $\{ (Q_a', I_a') \}$
satisfy the gluing condition \eqref{qcr} if they overlap. 
Suppose that $I_a' = \sum_p s_{ap} I_p$ as endomorphisms of $Q$, where $(s_{ap})$ is 
an $SO(3)$-valued function. 
Choose $\eta_a'=\sum_p s_{ap} \eta_p$ and $T_1', T_2', T_3'$ be the corresponding 
local sections of $Q^\perp$. 
We must show that 
\begin{equation}\label{gluing_condition}
T_a' = \sum_p s_{ap} T_p\quad \mbox{and}\quad I_a' = \biggl( \sum_p s_{ap} 
\widetilde{I_p} \biggr) \biggm|_{Q_a'},
\end{equation} 
where the notation $\widetilde{I_p}$ is as in \S 1. 
Since the former relations are clear, 
it sufficces to verify the latter relations of \eqref{gluing_condition}. 
Set $I_a'' = \sum_p s_{ap} \widetilde{I_p}$. 
We compute  
$$
I_a''T_b' = \sum_{p,q} s_{ap} s_{bq} \widetilde{I_p} T_q, 
$$ 
and restricting the indices $p,q$ to those which extends to a cyclic permutation $(p,q,r)$ of $(1,2,3)$, 
we further rewrite the right-hand side as 
$$
\sum ( s_{ap} s_{bq} - s_{aq} s_{bp} ) \widetilde{I_p} T_q = \sum_r s_{cr} T_r = T_c'. 
$$ 
Thus $I_a'' = I_a'$, which gives the latter relations of \eqref{gluing_condition}.  

The above construction actually gives a quaternionic pseudohermitian structure 
such that the associated Levi form is the metric $\gamma$, and therefore, we have 
the canonical three-plane field $(Q^\perp)'$. 
While $(Q^\perp)'$ differs from $Q^\perp$ in general, $(Q^\perp)' = Q^\perp$ 
holds when $Q^\perp$ is Biquard's one, locally generated by the vector fields 
$\{R_a\}_{a=1,2,3}$ satisfying the Reeb condition 
\eqref{Reeb_condition}. 
We record this fact as the following 

\begin{Proposition}\label{cannonical=reeb} 
Let $M$ be a quaternionic contact manifold of dimension $> 7$ 
with a choice of metric $\gamma$ on the corank three bundle $Q$.  
Let $Q^\perp$ be the rank three bundle locally generated 
by the vector fields $\{R_a\}_{a=1,2,3}$ satisfying \eqref{Reeb_condition},  
and equip $M$ with a quaternionic pseudohermitian structure in the way 
as above. 
Then $Q^\perp$ gives the canonical three-plane field associated with 
the quaternionic pseudohermitian structure. 
\end{Proposition}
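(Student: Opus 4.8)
The plan is to verify that Biquard's admissible triple $(R_1,R_2,R_3)$ already satisfies the defining equation \eqref{require0} of the canonical triple, and then to invoke the uniqueness assertion of Theorem \ref{cantriple0}. Since the canonical three-plane field is constructed locally and glued by Theorem \ref{gluing}, while $Q^\perp=\R R_1\oplus\R R_2\oplus\R R_3$ is already globally defined on the quaternionic contact manifold, I would work on a neighbourhood on which compatible $(I_1,I_2,I_3)$, $(\eta_1,\eta_2,\eta_3)$ and the Reeb fields $R_a$ of \eqref{Reeb_condition} are all available; the global statement then follows by patching.

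I would first dispose of two routine points. The quaternionic pseudohermitian structure furnished by the construction of \S 5 has Levi form $\gamma$, and because $d\eta_a(\cdot,I_a\cdot)=\gamma$ for every $a$, the form $h$ of the ultra-pseudoconvexity definition equals $(2n+4)\gamma-3\gamma=(2n+1)\gamma$, which is positive definite; hence the structure is ultra-pseudoconvex, and since $\dim M>7$, Theorem \ref{cantriple0} applies and provides a unique admissible three-plane field satisfying \eqref{require0}. Next, taking $T_a=R_a$, so that $Q^\perp=\R R_1\oplus\R R_2\oplus\R R_3$, let $\nabla$ be the associated connection of Proposition \ref{noncanconn1} and extend $g=\gamma$ to $TM$ by $g(R_a,\cdot)=0$. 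By conditions (i), (ii), (iii-a) of Proposition \ref{noncanconn1}, the $Q$-partial connection $\nabla^Q$ on $Q$ — equivalently, the connection matrix $\omega$ regarded as defined on $Q$ — is governed entirely by the Koszul-type formula \eqref{eq209}, which involves only $\gamma$ on $Q$ and brackets of sections of $Q$; as $\gamma$ is nondegenerate on $Q$, this formula determines $\nabla^Q$ uniquely.

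The crux is to identify $\nabla^Q$ with the $Q$-partial connection $(D^B)^Q$ of the Biquard connection $D^B$ of $(M,\gamma)$. Biquard's connection preserves $Q$ and his complement $\R R_1\oplus\R R_2\oplus\R R_3$, is metric ($D^B\gamma=0$), and its torsion takes values in $Q^\perp$ on $Q\times Q$, i.e.\ $\Tor^{D^B}(X,Y)_Q=0$ for $X,Y\in Q$; this last is precisely the torsion normalization built into Biquard's construction. Running the standard Koszul manipulation with these three properties, and using $g(Q^\perp,Q)=0$ to discard the torsion terms, shows that $(D^B)^Q$ also satisfies \eqref{eq209}, so $\nabla^Q=(D^B)^Q$. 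Consequently, in any adapted unitary frame as in \eqref{rframe} and \eqref{cframe} the connection matrix of $\nabla$ on $Q$ coincides with that of $D^B$. Since $D^B$ is an $Sp(n)\cdot Sp(1)$-connection, its connection matrix in such a frame is $(sp(n)+sp(1))$-valued; hence $\omega^{\rm obs}=0$ and a fortiori $\omega^{E\otimes H}=0$, that is, \eqref{require0} holds for the triple $(R_1,R_2,R_3)$.

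Finally, by the uniqueness in Theorem \ref{cantriple0} the canonical three-plane field associated with the quaternionic pseudohermitian structure is exactly $\R R_1\oplus\R R_2\oplus\R R_3=Q^\perp$ on each such neighbourhood; gluing via Theorem \ref{gluing}, together with the globality of Biquard's $Q^\perp$, finishes the proof. The main obstacle is the identification $\nabla^Q=(D^B)^Q$: it hinges on knowing precisely which normalization of the torsion is imposed in the definition of the Biquard connection — namely $\Tor^{D^B}(X,Y)\in Q^\perp$ for $X,Y\in Q$ — so that formula \eqref{eq209}, originally derived for $\nabla$, applies verbatim to $D^B$. Once this is granted, the remainder is bookkeeping with the representation-theoretic decomposition already set up in \S 3 and \S 4.
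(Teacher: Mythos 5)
Your proposal is correct and follows essentially the same route as the paper: identify the $Q$-partial connection of $\nabla$ with that of the Biquard connection via the common Koszul characterization \eqref{eq209}, use that $D^B$ is an $Sp(n)\cdot Sp(1)$-connection to get $\omega^{\rm obs}=0$ (hence $\omega^{E\otimes H}=0$), and conclude by the uniqueness in Theorem \ref{cantriple0}. Your extra checks (that $h=(2n+1)\gamma$ gives ultra-pseudoconvexity, and the explicit torsion normalization of $D^B$) are sound and merely make explicit what the paper leaves implicit.
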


\begin{proof}
Let $D^B$ be the Biquard connection associated with the metric $\gamma$, 
and $\nabla$ the affine connection, given by Proposition \ref{noncanconn1}, 
associated with the quaternionic 
pseudohermitian structure and the admissible three-plane field $Q^\perp$.  
When regarded as $Q$-partial connections, they coincide with each other, since 
they are characterized by the same condition \eqref{eq209}. 
We know that $D^B$ restricts to an $Sp(n)\cdot Sp(1)$-connection on $Q$, 
and therefore $\nabla$ restricts to a $Q$-partial connection preserving the 
$Sp(n)\cdot Sp(1)$ strucutre of $Q$. 
So the obstruction $\omega^{\rm obs}$ vanishes, and in particular, 
$\omega^{E\otimes H} = 0$. 
This means that $Q^\perp$ is the canonical three-plane field (and 
$\nabla$ is the canonical connection) associated with 
the quaternionic pseudohermitian structure. 
We are done.
\end{proof}

The following proposition generalizes \cite[Proposition 2.1]{duc2}, which characterizes a quaternionic 
contact real hypersurface in a quaternionic manifold, to an arbitrary quaternionic CR manifold.  
It shows that the above mentioned coincidence of the Levi form with 
the metric $\gamma$ is actually the case for {\em any} quaternionic CR structure which has 
$(Q, \mathbb{I})$ as the underlying structure. 

\begin{Proposition}\label{QC_more_restrictive}
Let $M$ be a quaternionic contact manifold, with the associated corank three subbundle $Q$ of $TM$, 
two-sphere bundle $\mathbb{I}$ of complex structures of $Q$ and conformal class $[\gamma]$ of metrics on $Q$. 
Then for any quaternionic CR structure on $M$ having $(Q,\mathbb{I})$ as the underlying structure, 
the conformal class consisting of all Levi forms coincides with $[\gamma]$. 
Furthermore, if $\theta=(\theta_1, \theta_2, \theta_3)$ is a local $\R^3$-valued one-form 
compatible with 
a local hyper CR structure $\{(Q_a, I_a)\}_{a=1,2,3}$ (constituting the quaternionic CR structure), 
then $\Levi_\theta$, $(I_1,I_2,I_3)$ and $\theta$ satisfy the compatibility relations 
\begin{equation}\label{compatibility_relation2}
\Levi_\theta(X,Y) = d\theta_a (X, I_a Y),\quad a=1,2,3, 
\end{equation}
where $X,Y\in Q$. 
In particular, the quaternionic CR structure under consideration must be ultra-pseudoconvex.
\end{Proposition}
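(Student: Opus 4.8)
The plan is to prove the local assertion first: if $\theta=(\theta_1,\theta_2,\theta_3)$ is compatible with a local hyper CR structure $\{(Q_a,I_a)\}$ belonging to the given quaternionic CR structure, then $\Levi_\theta$ is a nowhere-vanishing multiple of $\gamma$ and \eqref{compatibility_relation2} holds; the statements about the conformal class and about ultra-pseudoconvexity will then follow at once. The key device is to compare the hyper CR one-form $\theta$ with a quaternionic contact one-form $\eta$ \emph{attached to the very same triple of complex structures}.

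First I would fix such a local hyper CR structure, with $(I_1,I_2,I_3)$ a local frame of $\mathbb{I}$ satisfying the quaternion relations, and a compatible $\theta$. By the remark following Definition \ref{qc_structure} there is a local $\R^3$-valued one-form $\eta=(\eta_1,\eta_2,\eta_3)$ for the quaternionic contact structure with $d\eta_a(X,Y)=\gamma(I_aX,Y)$ for $X,Y\in Q$. Both $(\theta_a)$ and $(\eta_a)$ annihilate $Q$, and each of the two triples is pointwise linearly independent (its common kernel has corank three), so there is a unique $GL(3,\R)$-valued function $(a_{ab})$ with $\theta_a=\sum_b a_{ab}\eta_b$. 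Since each $\eta_b$ vanishes on $Q$, the terms $da_{ab}\wedge\eta_b$ contribute nothing on $Q\times Q$, and therefore
\[
d\theta_a(X,Y)=\sum_{b=1}^{3} a_{ab}\,\gamma(I_bX,Y),\qquad X,Y\in Q.
\]

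The next step is to show that $(a_{ab})$ is diagonal. Here I would use \eqref{I_a_invariance_of_dtheta_a}, valid because the hyper CR structure is integrable, namely $d\theta_a(X,Y)=d\theta_a(I_aX,I_aY)$ on $Q$. Using the $\mathbb{I}$-invariance of $\gamma$ together with the quaternion relations, one computes $\gamma(I_bI_aX,I_aY)=\gamma(I_bX,Y)$ when $b=a$ and $\gamma(I_bI_aX,I_aY)=-\gamma(I_bX,Y)$ when $b\neq a$; comparing the two expressions for $d\theta_a$ forces $\sum_{b\neq a}a_{ab}\,\gamma(I_bX,Y)=0$ for all $X,Y\in Q$, i.e.\ $\sum_{b\neq a}a_{ab}I_b=0$ on $Q$. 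Since the two complex structures $I_b$ with $b\neq a$ anti-commute, their combination squares to $-(a_{ab}^2+a_{ac}^2)\id$, whence $a_{ab}=0$ for $b\neq a$. Writing $\lambda_a:=a_{aa}$, a nowhere-vanishing function, we obtain $d\theta_a(X,Y)=\lambda_a\gamma(I_aX,Y)$, and hence $\Levi_{\theta_a}(X,Y)=d\theta_a(X,I_aY)=\lambda_a\gamma(X,Y)$ on $Q$.

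Finally I would equalize the $\lambda_a$ and conclude. Substituting $d\theta_a|_Q=\lambda_a\gamma(I_a\cdot,\cdot)$ into the identity \eqref{eq102} defining $\Levi_\theta$, each of its three expressions evaluates to $\lambda_a\gamma(X,Y)$ (using $\gamma(I_aI_bX,I_cY)=\gamma(X,Y)$ for cyclic $(a,b,c)$), so $\lambda_1=\lambda_2=\lambda_3=:\lambda$ and $\Levi_\theta=\lambda\gamma$. This is exactly \eqref{compatibility_relation2}, and since $\lambda$ is nowhere zero it shows that every local Levi form lies in $[\gamma]$; as this holds on every chart, the conformal class of all Levi forms coincides with $[\gamma]$. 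For the final assertion, $h=(2n+4)\Levi_\theta-\sum_{a=1}^{3}d\theta_a(\cdot,I_a\cdot)=(2n+4)\lambda\gamma-3\lambda\gamma=(2n+1)\lambda\gamma$, which is definite because $\gamma$ is a metric and $\lambda$ is nowhere vanishing; hence the quaternionic CR structure is ultra-pseudoconvex. The step I expect to be the main obstacle is the diagonalization: one must take care to attach $\eta$ to the very triple $(I_1,I_2,I_3)$ coming from the hyper CR structure — this is what lets the two structures communicate — and to keep the signs straight both in the formulas for $\gamma(I_bI_aX,I_aY)$ and in the independence argument for $\{I_b\}$.
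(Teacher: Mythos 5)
Your proposal is correct and follows essentially the same route as the paper: express $\theta_a=\sum_p s_{ap}\eta_p$ for the contact form $\eta$ attached to the same triple $(I_1,I_2,I_3)$, use the $I_a$-invariance of $d\theta_a|_Q$ (coming from integrability) to kill the off-diagonal coefficients so that $d\theta_a(\cdot,I_a\cdot)=\lambda_a\gamma$ on $Q$, and then equalize the $\lambda_a$ via the three expressions for $\Levi_\theta$. The paper phrases the diagonalization as $I_aJ_a=J_aI_a\Rightarrow J_a=\lambda_aI_a$ with $J_a=\sum_p s_{ap}I_p$, which is the same computation you carry out explicitly; your concluding remarks on the conformal class and on $h=(2n+1)\lambda\gamma$ supply details the paper leaves implicit.
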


\begin{proof} 
Let $(I_1,I_2,I_3)$ be as in the statement of the proposition. 
The condition (iii) says that there exists a local $\R^3$-valued one-form 
$\eta = (\eta_1, \eta_2, \eta_3)$ such that the kernel of $\eta$ coincides with $Q$ 
and \eqref{compatibility_relation1} holds: $\gamma(I_aX, Y) = d\eta_a(X, Y)$.  
First observe that $\theta_a$ may be expressed as $\theta_a = \sum_{p=1}^3 s_{ap}\eta_p$, 
where $(s_{ap})$ is a $GL(3,\R)$-valued function. 
Then for $X,Y\in Q$, 
$$
d\theta_a(X,Y) = \sum_p s_{ap}\, d\eta_p(X,Y) = \sum_p s_{ap}\, \gamma(I_pX,Y) = \gamma(J_aX, Y), 
$$
where we set $J_a = \sum_p s_{ap} I_p$. 
We compute $d\theta_a(X,I_aY)$ in two ways: 
$$
d\theta_a(X,I_aY) = \gamma(J_aX, I_aY) = - \gamma(I_aJ_aX, Y)
$$
and 
$$
d\theta_a(X,I_aY) = - d\theta_a(I_aX,Y) = - \gamma(J_aI_aX,Y).
$$
This implies $I_aJ_a = J_aI_a$ as endomorphisms of $Q$, and therefore $J_a$ is a multiple of 
$I_a$ by a scalar-valued function: $J_a = \lambda_a I_a$, $\lambda_a\neq 0$.  
Now $d\theta_a(X,I_aY) = \lambda_a \gamma(X,Y)$, and so $\Levi_\theta(X,Y) = \lambda_a \gamma(X,Y)$. 
Therefore, $\lambda_1=\lambda_2=\lambda_3$, and denoting this function by $\lambda$, 
we have $d\theta_a(X,I_aY) = \Levi_\theta(X,Y) = \lambda \gamma(X,Y)$. 
\end{proof}

We now look at a real hypersurface $M$ in a quaternionic manifold, 
and compare the quaternionic CR structure to the quaternionic contact structure in this case. 
As explained in \S 2, $M$ has a canonical quaternionic CR structure, and therefore there exists 
a canonical corank three subbundle $Q$ of $TM$, together with a canonical two-sphere bundle 
$\mathbb{I}$ of complex structures of $Q$ as in (ii) of the above definition. 
In contrast, a real hypersurface in a quaternionic manifold does not admit in general a quaternionic contact structure 
which the canonical $(Q, \mathbb{I})$ underlies. 
Ellipsoids as in \S 2 supply concrete examples; an ellipsoid in $\mathbb{H}^{n+1}$ does not 
admit a quaternionic contact structure having the canonical $(Q, \mathbb{I})$ as the underlying structure, 
unless the ellipsoid is a quaternionic one. 
This follows from Proposition \ref{QC_more_restrictive}. 
Indeed, for an ellipsoid which is not quaternionic, we observed in \S 2 that the complex Levi forms 
$\Levi_{\theta_a} = d\theta_a(\cdot,I_a\cdot)$ do not coincide on $Q$ for the standard choice of $\theta$. 
In particular, \eqref{compatibility_relation2} cannot hold. 
Note that, since any ellipsoid in $\mathbb{H}^{n+1}$ is diffeomorphic to 
the sphere $S^{4n+3}$, 
it does admit a quaternionic contact structure by pulling back that of the sphere.
However, the underlying structure $(Q,\mathbb{I})$ is different from the canonical one of the ellipsoid. 

The hyper CR manifolds of Example \ref{deformation_qheisen} and Example \ref{T^3_bundle} give 
concrete examples of intrinsic quaternionic CR manifold 
which does not admit a quaternionic contact structure with the same 
underlying structure $(Q, \mathbb{I})$. 
Indeed, for the pseudohermitian structure $\theta$ of Example \ref{deformation_qheisen}, 
\eqref{compatibility_relation2} holds if and only if
$$
A^1_\alpha+B^1_\alpha =  C^1_\alpha+D^1_\alpha = A^2_\alpha+C^2_\alpha =  B^2_\alpha+D^2_\alpha
= A^3_\alpha+D^3_\alpha =  B^3_\alpha+C^3_\alpha = \frac{\Lambda_\alpha}{2}
$$
for all $\alpha$. 
In other words, unless this last condition is satisfied, \eqref{compatibility_relation2} cannot hold. 
For $\theta$ of Example \ref{T^3_bundle}, \eqref{T^3_bundle_h} shows that $h$ and $\Levi_\theta$ 
are not proportional, and therefore \eqref{compatibility_relation2} cannot hold. 

\section{Appendix.}

\subsection{All $(Q_{\bf v},I_{\bf v})$ satisfy \eqref{integrability_condition0} and \eqref{integrability_condition}}
\label{ap_integrability}

Let $M$ be a hyper CR manifold, and fix a function ${\bf v} = (v_1, v_2, v_3)$ with values in $S^2 \subset \R^3$.
In this subsection, we will show that $(Q_{\bf v},I_{\bf v})$ satisfies the conditions 
\eqref{integrability_condition0} and \eqref{integrability_condition} for all $X,Y\in \Gamma(Q)$. 

Let $\theta = (\theta_1, \theta_2, \theta_3)$ be an $\R^3$-valued one-form on $M$ 
compatible with the hyper CR structure. 
Set $\theta_{\bf v} := v_1\theta_1 + v_2\theta_2 + v_3\theta_3$, 
so that $Q_{\bf v} = \ker \theta_{\bf v}$. 
Fix an admissible triple $(T_1,T_2,T_3)$ such that $\theta_a(T_a) = 1$. 
Recall from \S 1 that endomorphisms $\widetilde{I_a}$ of $TM$ are defined by setting 
$\widetilde{I_a}X = {I_a}X$ for $X \in Q_a$ and $\widetilde{I_a}T_a = 0$, and that 
$I_{\bf v} = (v_1\widetilde{I_1} + v_2\widetilde{I_2} + v_3\widetilde{I_3})|_{Q_{\bf v}}$. 
We will use the following relations:
\begin{equation}\label{theta_ai_1}
\theta_c = \theta_a{\circ}\widetilde{I}_b = - \theta_b{\circ}\widetilde{I}_a, \quad
\theta_a{\circ}\widetilde{I}_a = 0, 
\end{equation}
\begin{equation}\label{theta_ai_2}
\widetilde{I_c} = \left\{
\begin{array}{rl}
\widetilde{I_a}{I_b} & \mbox{\rm on } \ Q_b,\\
-\widetilde{I_b}{I_a} & \mbox{\rm on } \ Q_a. 
\end{array}
\right.
\end{equation}

\begin{Proposition} \label{familyintegrable}
For any $S^2$-valued function ${\bf v}$, 
$(Q_{\bf v},I_{\bf v})$ satisfies 
\begin{eqnarray}
&[ X,Y ] - [I_{\bf v}X,I_{\bf v}Y ] \in \Gamma(Q_{\bf v}), \label{1stint}\\
&I_{\mathbf v} ( [X,Y] - [ I_{\bf v}X,I_{\bf v}Y ] ) - [ X,I_{\bf v}Y ] - [ I_{\bf v}X,Y ] )\in \Gamma(Q) 
\label{2ndint}
\end{eqnarray}
for all $X, Y \in \Gamma(Q)$.
\end{Proposition}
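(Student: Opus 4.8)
The plan is to reduce \eqref{1stint} and \eqref{2ndint} for $(Q_{\bf v}, I_{\bf v})$ to the three given integrability conditions \eqref{integrability_condition0}, \eqref{integrability_condition} for the individual structures $(Q_a, I_a)$, by expanding everything along the admissible triple $(T_1,T_2,T_3)$ and using the algebraic relations \eqref{theta_ai_1}, \eqref{theta_ai_2}. Concretely, for $X,Y\in\Gamma(Q)$ write $I_{\bf v}X = \sum_a v_a \widetilde{I_a}X = \sum_a v_a I_a X$ (note $X\in Q\subset Q_a$, so $\widetilde{I_a}X = I_aX\in Q_a$, and in fact $I_aX\in Q$ only up to a $T$-term — this is the subtlety). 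The first claim \eqref{1stint} is equivalent to $\theta_{\bf v}([X,Y]-[I_{\bf v}X,I_{\bf v}Y]) = 0$, i.e.\ $\sum_b v_b\,\theta_b([X,Y]-[I_{\bf v}X,I_{\bf v}Y]) = 0$. I would expand the bracket $[I_{\bf v}X, I_{\bf v}Y] = \sum_{a,a'} [v_aI_aX + (\text{terms where } v_a \text{ is differentiated}),\ v_{a'}I_{a'}Y + \cdots]$, being careful that the $v_a$ are functions: derivatives of $v_a$ produce terms proportional to $I_{a'}Y$ or $I_aX$, which lie in $Q$ after projection, hence are killed by the relevant $\theta_b$-evaluations after using $d\theta_b|_Q$ considerations; alternatively one notes $\sum_a v_a dv_a = \tfrac12 d(|{\bf v}|^2) = 0$. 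The genuinely new input is that $I_aX$ for $X\in Q$ is not in $Q$ but only in $Q_a$; writing $I_aX = (I_aX)_Q + \sum_{b\neq a}\theta_b(I_aX)\,T_b$ and similarly for $I_{a'}Y$, one must track the $T$-components.

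The cleaner route, which I would actually pursue, is to work one $a$ at a time using the identity $I_{\bf v}|_Q = \sum_a v_a I_a|_Q$ pointwise and the observation that the defect in \eqref{1stint}, \eqref{2ndint} is tensorial in $(X,Y)$ — so it suffices to check it at a point with $X,Y$ extended to sections of $Q$ in any convenient way, and in particular one may freely use $d\theta_a$. Then \eqref{1stint} becomes the statement $\sum_b v_b\, d\theta_b(X,Y) = \sum_b v_b\, d\theta_b(I_{\bf v}X, I_{\bf v}Y)$ on $Q$; expanding the right side bilinearly in ${\bf v}$ and matching coefficients, the diagonal terms $v_a^2\, d\theta_a(I_aX,I_aY)$ are handled by \eqref{integrability_condition0} (equivalently \eqref{I_a_invariance_of_dtheta_a}: $d\theta_a(I_aX,I_aY)=d\theta_a(X,Y)$ on $Q$), so those contribute $\sum_a v_a^2\, d\theta_b$-type terms that need to be reorganized, and the cross terms $v_av_{a'}\,d\theta_b(I_aX, I_{a'}Y)$ with $a\neq a'$ must cancel in pairs; this cancellation is exactly a consequence of the mixed identity \eqref{eq102}, suitably symmetrized over $b$. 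I expect the bookkeeping here — keeping straight which cyclic triple $(a,b,c)$ each term belongs to, and exploiting \eqref{eq101}, \eqref{eq102}, \eqref{I_a_invariance_of_dtheta_a} — to be the main obstacle, but no single step is deep.

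For \eqref{2ndint}, I would proceed in the same spirit: \eqref{2ndint} asserts that $\theta_{\bf v}$ vanishes on $I_{\bf v}([X,Y]-[I_{\bf v}X,I_{\bf v}Y]) - [X,I_{\bf v}Y]-[I_{\bf v}X,Y]$, and the content is the ``$J$-compatibility'' of the bracket, which for each fixed $a$ is precisely \eqref{integrability_condition}. Using \eqref{1stint} (already proved) we know $[X,Y]-[I_{\bf v}X,I_{\bf v}Y]\in\Gamma(Q_{\bf v})$, so $I_{\bf v}$ may legitimately be applied to it; then one pairs with $\theta_{\bf v}$, expands in ${\bf v}$, and the diagonal contributions reduce to \eqref{integrability_condition} for each $(Q_a,I_a)$ while the off-diagonal ones cancel by \eqref{eq101}--\eqref{eq102} combined with the endomorphism relations \eqref{theta_ai_1}, \eqref{theta_ai_2} (e.g.\ $\theta_c = \theta_a\circ\widetilde{I_b}$, $\widetilde{I_c}=\widetilde{I_a}I_b$ on $Q_b$). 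Throughout, the derivative-of-${\bf v}$ terms are disposed of by $\sum_a v_a\,dv_a = 0$ together with the fact that such terms land in $Q$ and are annihilated by the $\theta$-pairings in question. I would organize the computation so that the pointwise identity $I_{\bf v}^2 = -\mathrm{id}$ on $Q_{\bf v}$ and the expansions of $I_{\bf v}X$, $I_{\bf v}Y$ into $Q$- and $T$-parts are recorded once at the outset, and then the two claims follow by the same mechanical expansion; the only real work is verifying the cross-term cancellations, which is where I'd spend the bulk of the argument.
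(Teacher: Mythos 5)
Your overall strategy---expand in ${\bf v}$, reduce to the integrability conditions \eqref{integrability_condition0}, \eqref{integrability_condition} for the three structures $(Q_a,I_a)$, and kill the $d{\bf v}$-terms because the relevant $\theta_b$ vanish on the resulting arguments---is the same as the paper's, and that last point is handled correctly. Two corrections to the setup, though. First, $I_a$ preserves $Q$ exactly (from $I(Q_1\cap Q_2)=Q_1\cap Q_3$ and its companions), so for $X\in Q$ there is no ``$T$-term'' in $I_aX$; the subtlety you flag in your first paragraph is not there, and this is precisely what makes the expansion tractable. Second, the paper sidesteps your cubic-in-${\bf v}$ expansion by instead verifying the equivalent skew statement $\theta_{\bf v}([X,I_{\bf v}Y]+[I_{\bf v}X,Y])=0$, which is only quadratic in ${\bf v}$.

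The genuine gap is the cross-term cancellation, which is exactly the part you postpone. You attribute it to \eqref{eq102}, but \eqref{eq102} records only the symmetric, quaternion-hermitian consequence of the integrability conditions and is too weak. What is actually needed is the skew identity \eqref{intmodQ},
$$
\theta_b([X,I_cY]+[I_cX,Y]) \;=\; -\,\theta_c([X,I_bY]+[I_bX,Y]),\qquad X,Y\in\Gamma(Q),
$$
valid for all $b,c$ including $b=c$. Granting it, \eqref{1stint} is immediate (a symmetric sum $\sum_{b,c}v_bv_c$ paired against an array antisymmetric in $(b,c)$ vanishes), and it also disposes of the mixed terms in \eqref{2ndint}. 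The identity is obtained by pairing \eqref{integrability_condition} with $\theta_c$ to get $\theta_b([X,Y]-[I_aX,I_aY])=\theta_c([X,I_aY]+[I_aX,Y])$, substituting $Y\mapsto I_cY$, and then using \eqref{integrability_condition0} to replace $[I_bI_cX,I_bY]$ by $[I_cX,Y]$ modulo $\ker\theta_b$; note that \eqref{eq102} arises from the same starting identity by the \emph{other} substitution $Y\mapsto I_bY$, and in passing to it the skew information above has already been discarded. Your proposal never produces this identity, so the central cancellation is asserted rather than proved; once it is supplied, the rest of your plan (diagonal terms via \eqref{I_a_invariance_of_dtheta_a} and \eqref{integrability_condition}, derivative terms via $\theta_b|_Q=0$) goes through.
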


\begin{proof}  
We first prove \eqref{1stint} by verifying $\theta_{\bf v}([ X,I_{\bf v}Y ] + [ I_{\bf v}X,Y ]) = 0$
for $X, Y \in \Gamma(Q)$. 
Plug \eqref{integrability_condition} into $\theta_c$, use \eqref{eq101} and replace $Y$ by $I_cY$. 
We then obtain 
$$
\theta_b([X,I_cY] + [I_aX,I_bY]) + \theta_c([X,I_bY] - [I_aX,I_cY]) = 0. 
$$
Rewriting this as 
$$
\theta_b([X,I_cY] + [I_bI_cX,I_bY]) = - \theta_c([X,I_bY] + [I_cI_bX,I_cY]) 
$$
and using \eqref{integrability_condition0}, we conclude 
\begin{eqnarray}\label{intmodQ}
\theta_b([X,I_cY] + [I_cX,Y]) = - \theta_c([X,I_bY] + [I_bX,Y]), 
\end{eqnarray}
which also holds when $b=c$.
Therefore, 
$$
\theta_{\bf v}([ X,I_{\bf v}Y ] + [I_{\bf v}X, Y ])
= 
\sum_{b, c}{v_b}{v_c} \theta_b([X,{I_c}Y] + [{I_c}X,Y]) = 0. 
$$
Note that the terms involving the devivatives of $v_c$ 
disappear, since $\theta_b$ vanishes on $Q$. 
This proves \eqref{1stint}. 

Next we prove \eqref{2ndint} by showing that 
$$
\theta_a (I_{\mathbf v} ( [X,I_{\bf v}Y] + [ I_{\bf v}X,Y ] ) + [ X,Y ] - [ I_{\bf v}X,I_{\bf v}Y ] ) = 0
$$
for each $a$. 
The left-hand side is computed as
\begin{eqnarray*} 
&& \sum_{b,c} v_b v_c\, \theta_a ( \widetilde{I_b} ( [X,I_cY] + [ I_cX,Y ] ) - [I_bX, I_cY] ) + \theta_a([X,Y])\\ 
&=& \sum_b {v_b}^2 \theta_a ( I_b ( [X,I_bY] + [ I_bX,Y ] ) + [X,Y] - [I_bX, I_bY] ) \\ 
&& + \sum_{b\neq a} v_b v_a\, \theta_a ( \widetilde{I_b} ( [X,I_aY] + [ I_aX,Y ] ) 
+ \widetilde{I_a} ( [X,I_bY] + [ I_bX,Y ] ) \\ 
&& \phantom{+ \sum_{b\neq a} v_b v_a\, \theta_a (}
- [I_bX, I_aY] - [I_aX, I_bY] ) \\ 
&& + v_b v_c\, \theta_a ( \widetilde{I_b} ( [X,I_cY] + [ I_cX,Y ] ) + \widetilde{I_c} ( [X,I_bY] + [ I_bX,Y ] ) ) \\ 
&& \phantom{+ v_b v_c\, \theta_a (} - [I_bX, I_cY] - [I_cX, I_bY] ). 
\end{eqnarray*} 
Note that the sum over $b,c$ in the left-hand side is divided into four parts: 
the sums over $b=c$, $b\neq c=a$, $c\neq b=a$, and $b\neq c\neq a\neq b$. 
The second and third sums are grouped into the second sum of the right-hand side, 
and the fourth sum into the last term, in which  
$b,c$ are so that $(a,b,c)$ is a cyclic permutation of $(1,2,3)$.  
The first sum of the right-hand side vanishes by \eqref{integrability_condition}; 
the last term by \eqref{theta_ai_1}, 
\eqref{integrability_condition0}. 
The second sum also vanishes, since
\begin{eqnarray*} 
&& \theta_a ( \widetilde{I_b} ( [X,I_aY] + [ I_aX,Y ] ) = \theta_c ( [X,I_aY] + [ I_aX,Y ] ) \\ 
&=& - \theta_a ( [X,I_cY] + [ I_cX,Y ] ) = \theta_a ( [I_aX,I_bY] + [ I_bX,I_aY ] )
\end{eqnarray*} 
by \eqref{theta_ai_1}, \eqref{intmodQ} and \eqref{integrability_condition0}. 
This completes the proof of Proposition \ref{familyintegrable}.
\end{proof}

\subsection{Levi form}
\label{ap_Levi}

In this subsection, by applying Proposition \ref{familyintegrable}, we prove 
Propoisition \ref{prepLevi}, asserting that 
the Levi form on a quaternionic CR manifold is well-defined. 

\begin{Lemma}\label{replacing}
Let ${\mathbf u}, {\mathbf v}$ be mutually orthogonal unit vectors in $\R^3$ and $X, Y \in Q$. 
Then $d\theta_{\mathbf u}(I_{\mathbf v}X,I_{\mathbf v}Y)$ is independent of the choice of 
${\mathbf v}$ orthogonal to ${\mathbf u}$.
\end{Lemma}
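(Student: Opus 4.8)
The plan is to fix $\mathbf{u}$ and vary $\mathbf{v}$ in the unit circle orthogonal to $\mathbf{u}$, and show the derivative of $d\theta_{\mathbf u}(I_{\mathbf v}X, I_{\mathbf v}Y)$ along this circle vanishes. Concretely, I would parametrize the circle by $\mathbf{v}(s) = (\cos s)\,\mathbf{v} + (\sin s)\,\mathbf{w}$, where $\{\mathbf{u}, \mathbf{v}, \mathbf{w}\}$ is an orthonormal basis of $\R^3$ (so $\mathbf{w} = \mathbf{u}\times\mathbf{v}$, i.e.\ $\mathbf{w}$ corresponds to $\mathbf{u}\mathbf{v}$ under $\R^3 = \im\H$). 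Since on $Q$ the operators behave genuinely quaternionically, $I_{\mathbf{v}(s)}|_Q = (\cos s)\,I_{\mathbf v} + (\sin s)\,I_{\mathbf w}$, and differentiating at $s=0$ gives $\tfrac{d}{ds}\big|_{s=0} I_{\mathbf{v}(s)}X = I_{\mathbf w}X$ for $X\in Q$. Hence it suffices to prove, for all $X,Y\in Q$,
\begin{equation}\label{prop_to_show}
d\theta_{\mathbf u}(I_{\mathbf w}X, I_{\mathbf v}Y) + d\theta_{\mathbf u}(I_{\mathbf v}X, I_{\mathbf w}Y) = 0,
\end{equation}
whenever $\mathbf{u},\mathbf{v},\mathbf{w}$ are orthonormal with $\mathbf{w}$ corresponding to $\mathbf{u}\mathbf{v}$.

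Now \eqref{prop_to_show} should reduce, by $\R$-linearity in $\mathbf{u},\mathbf{v},\mathbf{w}$, to the analogous statement for $\theta_1, I_2, I_3$ — that is, to showing $d\theta_1(I_3 X, I_2 Y) + d\theta_1(I_2 X, I_3 Y) = 0$ for $X,Y\in Q$. The key input here is the relation \eqref{eq102}, together with $I_a$-invariance \eqref{I_a_invariance_of_dtheta_a} of each $d\theta_a$ on $Q$; more precisely, I expect to need the form of \eqref{eq102} that comes from the integrability condition \eqref{integrability_condition}, which after plugging into $\theta_c$ and using \eqref{eq101} yields identities like $d\theta_b(X,Y) - d\theta_b(I_aX,I_aY) = d\theta_c(X,I_aY) + d\theta_c(I_aX,Y)$. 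Replacing $X,Y$ by suitable images under $I_1,I_2,I_3$ and combining with $I_1$-invariance of $d\theta_1$ should collapse the two terms in \eqref{prop_to_show} against each other. I would carry this out by first writing $d\theta_1(I_3X, I_2Y)$, using $I_3 = I_1 I_2$ on $Q$ and $I_1$-invariance to rewrite it as $d\theta_1(I_2 X, I_1 I_2 Y)$ up to sign, then recognizing $I_1 I_2 = I_3$ again and comparing with the second term — the two will be negatives of each other.

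The main obstacle I anticipate is bookkeeping the derivative of $\mathbf{v}$ and the signs: while on $Q$ everything is quaternionic, $d\theta_{\mathbf u}$ is a two-form on all of $TM$, and one must be careful that the variation $\tfrac{d}{ds} I_{\mathbf v(s)}$ is applied only to arguments lying in $Q$ (which is where $I_{\mathbf v(s)}$ restricts to the naive linear combination). Since \eqref{prop_to_show} only involves $X,Y\in Q$ and the outputs $I_{\mathbf v}X, I_{\mathbf w}X$ land in possibly different subbundles $Q_{\mathbf v}, Q_{\mathbf w}$, but $d\theta_{\mathbf u}$ makes sense on all of these, the computation is legitimate; still, verifying that no spurious $T_a$-components or derivative-of-$\mathbf v$ terms enter (they don't, because $\theta_{\mathbf u}$ is a fixed form and we are differentiating its pullback under varying vector fields) requires care. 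Once \eqref{prop_to_show} is established pointwise for all $X,Y\in Q$, integrating along the circle shows $d\theta_{\mathbf u}(I_{\mathbf v}X, I_{\mathbf v}Y)$ is constant in $\mathbf v$, which is the assertion of the lemma.
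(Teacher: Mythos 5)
Your strategy is essentially the paper's, in infinitesimal form: the paper writes $\mathbf{v}'=\lambda\mathbf{v}+\mu\,\mathbf{u}\times\mathbf{v}$ and expands $d\theta_{\mathbf u}(I_{\mathbf{v}'}X,I_{\mathbf{v}'}Y)$ bilinearly, and the cross term it must kill,
$d\theta_{\mathbf u}(I_{\mathbf v}X,I_{\mathbf u}I_{\mathbf v}Y)+d\theta_{\mathbf u}(I_{\mathbf u}I_{\mathbf v}X,I_{\mathbf v}Y)$,
is exactly your identity \eqref{prop_to_show}; both arguments then dispose of it using the quaternion relation $I_{\mathbf w}=I_{\mathbf u}I_{\mathbf v}$ on $Q$ together with the $I_{\mathbf u}$-invariance of $d\theta_{\mathbf u}|_{Q\times Q}$. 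One step of yours does not work as stated, though it is easily repaired: you cannot reduce \eqref{prop_to_show} ``by $\R$-linearity'' to the single frame $(\theta_1,I_2,I_3)$, since a trilinear expression that vanishes at one orthonormal frame need not vanish at all of them (the constraint set is $SO(3)$, not a linear space). The correct and simpler route --- the one the paper takes --- is to invoke the invariance $d\theta_{\mathbf u}(Z,W)=d\theta_{\mathbf u}(I_{\mathbf u}Z,I_{\mathbf u}W)$ for $Z,W\in Q$ and \emph{arbitrary} unit $\mathbf{u}$, which is exactly what Proposition \ref{familyintegrable} (via \eqref{1stint}) provides; applying it to $d\theta_{\mathbf u}(I_{\mathbf w}X,I_{\mathbf v}Y)$ and using $I_{\mathbf u}I_{\mathbf w}=-I_{\mathbf v}$, $I_{\mathbf u}I_{\mathbf v}=I_{\mathbf w}$ gives $d\theta_{\mathbf u}(I_{\mathbf w}X,I_{\mathbf v}Y)=-d\theta_{\mathbf u}(I_{\mathbf v}X,I_{\mathbf w}Y)$ directly, with no reduction to the standard frame and no appeal to \eqref{eq102}. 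With that substitution your proof closes.
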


\begin{proof}
Let ${\mathbf v}'$ be another unit vector orthogonal to ${\mathbf u}$.
Then ${\mathbf v}'$ can be expressed as
${\mathbf v}'={\lambda}{\mathbf v} + {\mu}{\mathbf u}\times {\mathbf v}$ 
with ${\lambda^2} + {\mu^2} = 1$,
and we have 
\begin{eqnarray*}
d\theta_{\mathbf u}(I_{{\mathbf v}'}X,I_{{\mathbf v}'}Y)
&=&
{\lambda^2}
d\theta_{\mathbf u}(I_{\mathbf v}X,I_{\mathbf v}Y)
+ {\mu^2}
d\theta_{\mathbf u}(I_{\mathbf u} I_{\mathbf v} X, I_{\mathbf u} I_{\mathbf v} Y) \\
&&
+ {\lambda}{\mu}
[ d\theta_{\mathbf u}( I_{\mathbf v}X,I_{\mathbf u} I_{\mathbf v} Y)
+ d\theta_{\mathbf u}(I_{\mathbf u} I_{\mathbf v} X,I_{\mathbf v}Y) ]. 
\end{eqnarray*}
Since $d\theta_{\mathbf u}$, restricted to $Q$, is $I_{\mathbf u}$-invariant, 
the right-hand side is equal to $d\theta_{\mathbf u}(I_{\mathbf v}X,I_{\mathbf v}Y)$.
\end{proof}

\noindent
{\em Proof of Proposition \ref{prepLevi}.}\quad  
Let $\theta_{U} = (\theta_a)$ and $\theta_{U'} = (\theta_a')$. 
Then $\theta_a' = \sum_{p=1}^3 s_{ap} \theta_p$ for an $SO(3)$-valued function $(s_{ap})$. 
We must show that 
$$
d\theta_1'(X, I_1'Y)+ d\theta_1'(I_2'X, I_3'Y) = d\theta_1(X, I_1Y) + d\theta_1(I_2X, I_3Y) 
$$
for all $X,Y\in Q$. 
Since $d\theta_1' = \sum_{p=1}^3 ( s_{1p} d\theta_p + ds_{1p} \wedge \theta_p )$ 
and $\theta_p$'s vanish on $Q$, we may assume that $s_{ap}$'s are constants. 
Therefore, it suffices to verify that 
\begin{eqnarray}\label{Levi_on_qua}
d\theta_{\mathbf u}(X,I_{\mathbf u}Y)
+ d\theta_{\mathbf u}(I_{\mathbf v}X, I_{{\mathbf u}\times{\mathbf v}}Y)
= d\theta_1(X,I_1Y)+d\theta_1(I_2X,I_3Y) 
\end{eqnarray} 
for all $X, Y \in Q$, where ${\mathbf u}, {\mathbf v}$ are any mutually orthogonal unit vectors in $\R^3$. 

Note that by Lemma \ref{replacing}, the left-hand side of \eqref{Levi_on_qua} is independent of
$\mathbf{v}$ (orthogonal to $\mathbf{u}$). 
It is easy to verify that one can choose $\mathbf{v}$ orthogonal to $\mathbf{u}$, and $\mathbf{v}'$ 
orthogonal to $\mathbf{u}' = \mathbf{u} \times \mathbf{v}$, so that ${\mathbf{u}'} \times {\mathbf{v}'} = 
{\mathbf{e}}_1 = \mbox{}^t (1,0,0)$.

Using \eqref{2ndint} and
$\theta_{\mathbf u} \circ I_{\mathbf v} = \theta_{{\mathbf u}\times{\mathbf v}}$
(cf.~\eqref{theta_ai_1}), we can rewrite the left-hand side of \eqref{Levi_on_qua} as 
\begin{eqnarray}\label{above_comp}
d\theta_{\mathbf u}(X,I_{\mathbf u}Y) - d\theta_{\mathbf u}(I_{\mathbf v}X,I_{\mathbf v}(I_{\mathbf u}Y)) 
&=& - d\theta_{{\mathbf u}\times{\mathbf v}} (X,I_{\mathbf v}(I_{\mathbf u}Y)) 
- d\theta_{{\mathbf u}\times{\mathbf v}}(I_{\mathbf v}X,I_{\mathbf u}Y) \nonumber \\
&=& d\theta_{{\mathbf u}'}(X,I_{{\mathbf u}'}Y) - d\theta_{{\mathbf u}'}(I_{\mathbf v}X,I_{\mathbf v}(I_{{\mathbf u}'}Y)) \\
&=& d\theta_{{\mathbf u}'}(X,I_{{\mathbf u}'}Y)
- d\theta_{{\mathbf u}'}(I_{{\mathbf v}'}X,I_{{\mathbf v}'}(I_{{\mathbf u}'}Y)). \nonumber
\end{eqnarray}
%
%
For the last equality, we have used Lemma \ref{replacing} again to replace ${\mathbf v}$ by ${\mathbf v}'$. 
Computing similarly as in \eqref{above_comp} while noting that ${\mathbf v}'$ and ${\mathbf{e}}_2 = \mbox{}^t (0,1,0)$ 
are both orthogonal 
to ${\mathbf{e}}_1$, we can verify that the last expression is equal to 
the right-hand side of \eqref{Levi_on_qua}. 
\hfill $\square$

\end{document}